\theoremstyle{plain}
\newtheorem{theorem}{Theorem}[section]
\newtheorem{lemma}[theorem]{Lemma}
\newtheorem{corollary}[theorem]{Corollary}
\newtheorem{proposition}[theorem]{Proposition}
\theoremstyle{definition}
\newtheorem*{definition}{Definition}
\newtheorem*{theorem*}{Theorem}
\theoremstyle{remark}
\newtheorem{remark}{Remark}
\newcommand{\aca}{\'}   
\newcommand{\acg}{\`}   
\newcommand{\virgolap}{``}  
\newcommand{\virgolch}{''}  
\DeclareMathOperator{\inj}{\inj}
\begin{document}
\title{Sphere systems, standard form, and cores of products of trees}
\author{Francesca Iezzi}
\address{Mathematics Institute, University of Warwick,
Coventry, CV4 7AL, Great Britain}
\date{25th October 2016}

\begin{abstract}
We introduce the concept of a \emph{standard form} for two embedded maximal sphere systems in the doubled handlebody, and we prove an existence and uniqueness result. In particular, we show that pairs of maximal sphere systems in the doubled handlebody (up to homeomorphism) bijectively correspond to square complexes satisfying a set of properties. This work is a variant on Hatcher's normal form.
\end{abstract}

\maketitle

\section{Introduction}\label{SA}

Let $M_g$ be the connected sum of $g$ copies of $S^2 \times S^1$, this is homeomorphic to the double of the handlebody of genus $g$. Note that the fundamental group of $M_g$ is the free group of rank $g$, denoted as $F_g$, and, if $Mod(M_g)$ denotes the group of isotopy classes self-homeomorphisms of $M_g$, the natural map $Mod(M_g) \rightarrow Out(F_g)$ 
is surjective with finitely generated kernel; moreover, elements of the kernel fix homotopy classes of spheres (as proven in \citep{Lau2} p. 80-81). For this reason, collections of spheres in this class of manifolds have been a significant tool in the study of outer automorphism groups of free groups. We refer to a collection of disjoint pairwise non isotopic spheres in $M_g$ as a \emph{sphere system}.
An important result is that homotopic sphere systems in $M_g$ are isotopic (\citep{Lau1} Th\aca{e}or\acg{e}me I).


The idea of using sphere systems in $M_g$ as a tool in the study of $Out(F_g)$ goes back to Whitehead (\citep{Whi}, \citep{Sta})
and has been further developed by Hatcher in \citep{Hat1}. In the latter the author introduces the \emph{sphere complex} of the manifold $M_g$,
which has been a very useful tool in the study of the groups $Out(F_g)$. Collections of spheres in $M_g$ can also be used to give definitions for the free factor complex
(\citep{HatVog2}), and for Culler Vogtmann Outer Space (Appendix of  \citep{Hat1}).

In \citep{Hat1} the author also introduces the concept of a \emph{normal form} of spheres  with respect to a given maximal sphere system and he proves an existence result. Hatcher's normal form has been the basis of many of the results concerning the sphere complex, for example, the proof of hyperbolicity (\citep{HilHor}).



\vskip 0.3cm
In this paper, we introduce the concept of a \emph{standard form} for a pair of maximal sphere systems $(\Sigma_1, \Sigma_2)$ in $M_g$. Standard form is a symmetric definition, and is equivalent to reciprocal normal form of $\Sigma_1$, $\Sigma_2$, with the additional requirement that all complementary components of $\Sigma_1 \cup \Sigma_2$ in $M_g$ are handlebodies. We show then an existence and uniqueness result:

\begin{theorem*}I
Given a pair of maximal sphere systems $(\Sigma_1, \Sigma_2)$ in $M_g$ there exists a homotopic pair $(\Sigma_1', \Sigma_2')$ in standard form.
\end{theorem*}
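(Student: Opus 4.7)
The plan is to begin with Hatcher's classical existence theorem and then perform an additional simplification procedure to force all complementary regions to be handlebodies.

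\textbf{Step 1 (reciprocal normal form).} First I would apply Hatcher's normal form theorem from \citep{Hat1} to homotope $\Sigma_2$ into normal form with respect to $\Sigma_1$, and then the symmetric application to put $\Sigma_1$ into normal form with respect to $\Sigma_2$. The nontrivial check is that these two procedures are compatible: the second homotopy should be performable through ambient isotopies that preserve the normal-form conditions on the first system. This is plausible because Hatcher's moves are local (disk swaps inside complementary regions) and respect the combinatorial type of intersection curves; I would spell this out by verifying that the simplification moves in the proof of Hatcher's theorem do not increase the intersection complexity of the opposite system.

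\textbf{Step 2 (complexity and targeted surgery).} With $(\Sigma_1,\Sigma_2)$ in reciprocal normal form, I would examine the complementary components $P$ of $\Sigma_1\cup\Sigma_2$ in $M_g$. After rounding corners along intersection circles, each $\partial P$ is a closed orientable surface, and since $\pi_1(M_g)\cong F_g$, each $\pi_1(P)$ is free. Thus $P$ fails to be a handlebody exactly when $\rank \pi_1(P) < \genus(\partial P)$, i.e.\ when there is a nontrivial compressing disk in $P$ that is homotopically trivial. I would define a complexity such as
\[
c(\Sigma_1,\Sigma_2)=\sum_{P}\bigl(\genus(\partial P)-\rank \pi_1(P)\bigr),
\]
which vanishes precisely when every $P$ is a handlebody.

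\textbf{Step 3 (reduction move).} For any non-handlebody component $P$ I would produce an essential embedded compressing disk $D\subset P$ whose boundary $\partial D$ lies on $\partial P$. Doubling $D$ across $\partial P$ and pushing off produces an embedded sphere in $M_g$; analyzing where its trace meets $\Sigma_1$ and $\Sigma_2$ lets one replace a sphere of $\Sigma_1$ or $\Sigma_2$ by a Whitehead-type surgery that yields a homotopic maximal sphere system with strictly smaller $c$. Iterating, the process terminates (since $c\in\bbf Z_{\geq 0}$), and one reapplies Step 1 to restore reciprocal normal form. Finally I would verify that the handlebody condition is preserved under the subsequent normalization moves, so the resulting pair is in standard form.

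\textbf{Main obstacle.} The hard part is Step 3: producing the compression disk and then converting it into a genuine surgery of one of the sphere systems that \emph{preserves maximality, homotopy class, and reciprocal normal form, while strictly reducing $c$}. A naive compression could either destroy normal form, violate maximality, or change the isotopy class of one of the $\Sigma_i$. Arranging the surgery so that it only redistributes pieces of $\Sigma_1$ (or $\Sigma_2$) inside a single complementary region of the other system, and ensuring that the new intersection pattern remains in normal form, is where the main work will lie.
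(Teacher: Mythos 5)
Your high-level plan (put the pair in reciprocal normal form, then run a descending induction to kill the non-handlebody complementary components) is a legitimate alternative to the paper's route, and the paper itself remarks that Theorem I could be deduced from Hatcher's existence theorem. As written, though, the argument has a genuine gap concentrated in Steps 2 and 3. In Step 2 you assert that $\pi_1(P)$ is free because $\pi_1(M_g)\cong F_g$, and that $P$ is a handlebody exactly when $\rank \pi_1(P)=\genus(\partial P)$; neither is automatic. The inclusion $P\hookrightarrow M_g$ need not be $\pi_1$-injective, $\partial P$ need not be connected, and a compact orientable $3$-manifold with free fundamental group of the expected rank is a handlebody only once irreducibility is also established. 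So your complexity $c$ is not yet a well-defined measure of the failure of the handlebody condition. More seriously, the move in Step 3 --- replacing a sphere of $\Sigma_1$ or $\Sigma_2$ by a Whitehead-type surgery --- changes the homotopy class of that sphere and hence of the system, whereas Theorem I requires $\Sigma_i'$ homotopic to $\Sigma_i$. The only admissible moves are homotopies (equivalently, by Laudenbach, isotopies) of the $\Sigma_i$: you must untangle a non-handlebody component by sliding one system across a compressing disk without changing any homotopy class and without leaving minimal form. You correctly flag this as the main obstacle, but it is not an implementation detail; it is the entire content of the theorem, since reciprocal normal form constrains only how the $2$-pieces meet the boundary spheres of the complementary $3$-holed spheres of the other system, not how they are embedded in their interiors.

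For comparison, the paper's proof of Theorem \ref{consequence1} sidesteps the untangling problem altogether: it passes to the dual trees $T_1,T_2$ of $\widetilde{\Sigma_1},\widetilde{\Sigma_2}$, forms the core $C(T_1,T_2)\subset T_1\times T_2$ out of the non-nested pairs of end partitions, and feeds it into the inverse construction of Section \ref{Inverse construction}, which manufactures a model manifold carrying a pair of maximal sphere systems that is in standard form \emph{by construction}; Lemma \ref{spherepartition} then shows the model systems induce the same end partitions as the originals, hence are homotopic to them after transporting by an end-preserving equivariant homeomorphism. No complexity bookkeeping or surgery is needed because the standard-form representative is built abstractly rather than obtained by modifying the given embedded systems. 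If you wish to pursue your route, the missing piece is a proof that an innermost compressing disk in a non-handlebody component can be realised by an isotopy of one system supported in a single complementary component of the other, strictly decreasing a correctly defined complexity while preserving minimal position; without that, the induction does not get off the ground.
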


\begin{theorem*}II
If $(\Sigma_1, \Sigma_2)$ and  $(\Sigma_1', \Sigma_2')$ are two homotopic pairs of maximal sphere systems in $M_g$ in standard form, then there is a homeomorphism  $F:M_g \rightarrow M_g$, which induces an inner automorphism of the fundamental group and so that $F$ maps the pair $(\Sigma_1, \Sigma_2)$ to the pair $(\Sigma_1', \Sigma_2')$.
\end{theorem*}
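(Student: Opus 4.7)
The plan is to reduce first to the case $\Sigma_1 = \Sigma_1'$ by invoking Laudenbach's theorem, and then to construct the desired homeomorphism by working piecewise in the complementary components of the common first system. Since $\Sigma_1$ and $\Sigma_1'$ are homotopic maximal sphere systems, Laudenbach's theorem provides an ambient isotopy of $M_g$ carrying $\Sigma_1$ to $\Sigma_1'$; its time-one map is isotopic to the identity and hence induces the trivial (in particular inner) automorphism on $\pi_1(M_g)$. After applying this map one may assume $\Sigma_1 = \Sigma_1'$, with $\Sigma_2$ and $\Sigma_2'$ homotopic and both in normal form with respect to this common first system.

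I would then cut $M_g$ along $\Sigma_1$ to obtain pieces $P_1, \dots, P_k$, each homeomorphic to a three-sphere with finitely many open balls removed. Inside each $P_j$, the sphere systems $\Sigma_2$ and $\Sigma_2'$ restrict to collections of properly embedded disks that, by standard form, cut $P_j$ into three-balls. The first and crucial step is to show that the two disk systems induce, on the boundary of each $P_j$, patterns of circles that agree up to a homeomorphism of $\Sigma_1$. This is a rigidity statement for the circle pattern: among standard-form pairs with second sphere system in a fixed homotopy class, the combinatorial pattern of circles $\Sigma_2 \cap \Sigma_1$ should be uniquely determined, because standard form rules out bigons and other reducible configurations that could be simplified. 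I expect this to be the main obstacle; morally it is the content of the bijection with square complexes announced in the abstract.

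Once a homeomorphism $\phi$ of $\Sigma_1$ carrying one circle pattern to the other has been fixed, I would extend $\phi$ into each piece $P_j$ using the topological fact that a properly embedded disk system cutting a three-sphere-with-holes into three-balls is determined, up to homeomorphism rel boundary, by its boundary circles; this is proved by induction on the number of disks, using the Alexander trick to extend boundary identifications over each resulting three-ball. Gluing these extensions along $\Sigma_1$ produces a self-homeomorphism $F$ of $M_g$ with $F(\Sigma_1) = \Sigma_1$ and $F(\Sigma_2) = \Sigma_2'$. Since $F$ preserves the homotopy class of every sphere in $\Sigma_1$ and is isotopic to the identity on each complementary three-ball, the description of the kernel of $Mod(M_g) \to Out(F_g)$ recalled in the introduction guarantees that $F$ induces an inner automorphism of $F_g$, completing the proof.
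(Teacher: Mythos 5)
Your reduction to $\Sigma_1=\Sigma_1'$ via Laudenbach's isotopy theorem is fine, but the rest of the argument contains a factual error that breaks the extension step. The components of $\Sigma_2\setminus\Sigma_1$ inside a complementary piece $P_j$ of $\Sigma_1$ are \emph{not} all properly embedded disks: as the piece decomposition in Section \ref{Standard form} shows, they are discs, annuli, or pairs of pants, and consequently the components of $P_j\setminus\Sigma_2$ are handlebodies of genus up to four, not three-balls. The Alexander trick therefore does not apply to extend a boundary identification over these regions; one must also control how each handlebody is glued to its boundary pattern, and that gluing is only determined up to Dehn twists along the intersection circles. (The paper confronts exactly this ambiguity in Remark \ref{Dehn twist ok}, resolving it by the observation that each $1$-piece bounds an embedded disc, so the ambiguity amounts to a trivial Dehn surgery.) Your induction \virgolap over each resulting three-ball\virgolch\ does not get off the ground as stated.

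Second, the step you yourself flag as the main obstacle --- that the circle patterns of $\Sigma_2\cap\Sigma_1$ and $\Sigma_2'\cap\Sigma_1$ on $\Sigma_1$ agree up to a homeomorphism of $\Sigma_1$ --- is asserted rather than proven, and \virgolap standard form rules out bigons\virgolch\ is not an argument for it; the pattern one must match is not just the curve system on $\Sigma_1$ but the full combinatorics of which pieces are adjacent to which. This rigidity is precisely what the paper's machinery supplies: by Proposition \ref{equalityconstructions} the dual square complex of a standard-form pair is isomorphic to the core $C(T_1,T_2)$ of the two dual trees, which depends only on the partitions of the space of ends induced by the spheres, hence only on the homotopy classes of the two systems; so homotopic standard-form pairs have isomorphic dual square complexes, and the inverse construction of Section \ref{Inverse construction} (via Lemma \ref{lemma1consequence2}) rebuilds the triple, with all of its gluing data, from that complex alone. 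The final claim about the inner automorphism also needs the argument of Lemma \ref{lemma2consequence2} (a homeomorphism fixing the homotopy class of every sphere of a maximal system acts as an inner automorphism, with a caveat at $g=2$), rather than a direct appeal to the kernel description. Without an independent proof of your combinatorial rigidity claim and a correct treatment of the handlebody pieces, the proposal does not constitute a proof.
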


Note that Theorem I and Laudenbach's result that homotopic spheres in $M_g$ are isotopic imply that any pair of maximal sphere systems in $M_g$ can be isotoped to be in standard form.

\vskip 0.2cm
The methods we use are combinatorial. The main idea is that, given a pair of maximal sphere systems $(\Sigma_1, \Sigma_2)$, one can build a dual square complex (i.e. a CAT(0) cube complex of dimension $2$), where $0$-cells correspond to complementary components of $\Sigma_1 \cup \Sigma_2$, 1-cells correspond to components of $\Sigma_1 \setminus \Sigma_2$ and of $\Sigma_2 \setminus \Sigma_1$, and 2-cells correspond to components of $\Sigma_1 \cap \Sigma_2$.
The same square complex can also be obtained by applying the construction described in \citep{Gui} to the dual trees to $\widetilde{\Sigma_1}$ and $\widetilde{\Sigma_2}$ in the universal cover $\widetilde{M_g}$.
This idea has been used in \citep{Hor} to estimate distances in Outer Space.

In this paper (Section \ref{Inverse construction}) we describe an inverse to this construction, i. e. we show that
square complexes endowed with a set of properties determine pairs of sphere systems in $M_g$ in standard form.



\vskip 0.3cm
\par The article is organised as follows.

In Section \ref{Spheres, partitions and intersections} we clarify notation and
we recall how spheres in $M_g$ and intersection numbers relate to partitions of the space of ends of the universal cover.
\par In Section \ref{Standard form} we introduce the definition of
 \emph{standard form} for a pair of maximal sphere systems $(\Sigma_1, \Sigma_2)$ and
we hint at how standard form of $(\Sigma_1, \Sigma_2)$ implies some properties of the dual square complex. 

In Section \ref{The core of two trees} we introduce a more abstract construction:
given two trivalent trees $T_1$, $T_2$ endowed with a boundary identification we construct a \emph{core} $C(T_1, T_2)$ and we show that the core satisfies a set of properties  (properties (1)-(5) on page \pageref{properties core}).
The core  $C(T_1, T_2)$ we define turns out to be the same as the \emph{Guirardel core} of $T_1$ and $T_2$, defined in \citep{Gui}; .
We give a combinatorial description of this object, using partitions of the space of ends.
The construction of Section \ref{The core of two trees} gives an alternative way of building the dual square complex to two sphere systems in standard form (as shown in Proposition 2.1 of \citep{Hor} and in Proposition \ref{equalityconstructions} below).

In Section \ref{Inverse construction} we show that, starting with a square complex endowed with properties (1)-(5) of Section \ref{The core of two trees}, we can construct a doubled handlebody, with two embedded maximal sphere systems in standard form.

As an application of the constructions of Section \ref{The core of two trees} and Section \ref{Inverse construction},
in Section \ref{Consequences}
we prove Theorem I and Theorem II.
\vskip 0.3cm

Throughout the paper, for the sake of simplicity, we always assume that a pair of maximal sphere systems $(\Sigma_1, \Sigma_2)$ in $M_g$ satisfies the following hypothesis:
\par ($*$) no sphere in
$\Sigma_1$ is homotopic to any sphere in $\Sigma_2$
\label{hypothesis ($*$)}
\par All the arguments of the article can be generalised to the case where ($*$) is not fulfilled. A discussion about this more general case and a hint on how to generalise the arguments can be found in Section 2.6 of \citep{Iez}.
\vskip 0.2cm
 Note that Theorem I could also be proved using Hatcher's existence theorem for normal form. However, our arguments are independent on Hatcher's work, even though the concept of normal form has served as an inspiration.
\vskip 0.3cm
Acknowledgments: This work was carried out during my PhD, under the supervision of Brian Bowditch. I am very grateful for his guidance. My PhD was funded by an EPSRC doctoral grant. I have written this article while supported by Warwick Institute of Advanced Study (IAS) and Warwick Institute of Advanced Teaching and Learning (IATL).

\section{Spheres, partitions and intersections}
\label{Spheres, partitions and intersections}

Throughout the paper, we denote by $M_g$ the connected sum of $g$ copies of $S^2 \times S^1$ (i. e. the doubled handlebody), and by $\widetilde{M_g}$ its universal cover.
We always suppose spheres in $M_g$ are embedded and intersect transversally. A sphere is \emph{essential} if it does not bound a ball.
We denote by $i(s_1, s_2)$ the minimum possible number of circles belonging to $s_1 \cap s_2$, over the homotopy class of $s_1$ and $s_2$ and we call this number the intersection number of the spheres $s_1$ and $s_2$.   We say that two spheres $s_1$, $s_2$ intersect minimally if they realise their intersection number.
A \emph{sphere system} in $M_g$ is a collection of non isotopic disjoint spheres. We call a sphere system $\Sigma$ \emph{maximal} if it is maximal under inclusion, i.e. any embedded essential sphere $\sigma$ is either isotopic to a component of $\Sigma$, or intersects $\Sigma$.
A maximal sphere system $\Sigma$ in $M_g$ contains $3g-3$ spheres, and all connected components of $M_g \setminus \Sigma$ are three holed spheres.
\vskip 0.2cm
This section contains some (already known) background results about embedded spheres in the manifold $M_g$.
In particular, we will recall that embedded spheres in $\widetilde{M_g}$ can be identified to partitions of the space of ends of $\widetilde{M_g}$, which can be identified to the boundary of a given tree (Lemma \ref{spherepartition}); furthermore, the intersection number of two spheres in $\widetilde{M_g}$ is positive if and only if the partitions associated to the two spheres satisfy a particular property (Lemma \ref{sphereintersection}).

We first recall that, given a sphere system $\Sigma$ in $M_g$ (or in $\widetilde{M_g}$) we can associate to $\Sigma$ a graph $G_{\Sigma}$. Namely we take a vertex $v_C$ for each component $C$ of $M_g \setminus \Sigma$ and an edge $e_{\sigma}$ for each sphere $\sigma$ in $\Sigma$. The edge $e_{\sigma}$ is incident to the vertex $v_C$ if the sphere $\sigma$ is one of the boundary components of $C$. We call $G_{\Sigma}$ the \emph{dual graph to $\Sigma$}. We can endow $G_{\Sigma}$ with a metric by giving each edge length one.
There is a natural retraction $r: M_g \rightarrow G_\Sigma$. Namely, consider a regular neighborhood of $\Sigma$, call it $U(\Sigma)$ and parameterise it as $\Sigma \times (0,1)$. For any component $C$ of $M_g \setminus U(\Sigma$) let $r|C$ map everything to the vertex $v_C$. For any sphere $\sigma$ in $\Sigma$ set $r(\sigma \times t)$ to be the point $t$ in $e_{\sigma}$.
If each complementary component of $\Sigma$ in $M_g$ is simply connected, then the retraction $r$ induces an isomorphism of fundamental groups. Note that if $\widetilde{\Sigma}$ is the full lift of $\Sigma$ in $\widetilde{M_g}$, then the dual graph to $\widetilde{\Sigma}$ in $\widetilde{M_g}$ (which is a tree) is isomorphic to the universal cover of the graph $G_\Sigma$. We will denote it as $T_\Sigma$ and call it the \emph{dual tree} to $M_g$ and $\Sigma$. Note also that the retraction $r:M_g \rightarrow G_\Sigma$ lifts to a retraction $h: \widetilde{M_g} \rightarrow T_\Sigma$. If $\Sigma$ is a maximal sphere system, then the dual graph $G_\Sigma$ is trivalent, as well as the dual tree $T_\Sigma$.



\vskip 0.2cm

\subsection{Space of ends} Next we recall the definition of the space of ends.
If $X$ is a topological space and  $\{K_n\}$ is an exhaustion of $X$ by compact sets, then an \emph{end} of $X$ is a sequence $\{U_n\}$ where $U_k$ is a component of $X\setminus K_k$ and $U_k \supset U_{k+1}$.
This definition does not depend on the particular sequence of compact sets we choose. Given an open set $A$ in $X$ we say that an end $\{U_n\}$ is contained in the set $A$ if, for $k$ large enough, the set $U_k$ is contained in $A$.
Call the collection of ends of a given space $X$ the \emph{space of ends} of $X$ and denote it by $End(X)$.
The space $End(X)$ can be endowed with a topology:
a fundamental system of neighborhoods for the end $\{U_n\}$ is
given by the sets $\{e_{U_k} \}$,
for $U_k \in \{U_n \}$,
 where $e_{U_k}$ consists of all the points in $End(X)$ contained in $U_k$.
Note that the space of ends of a tree can be identified with its Gromov boundary, which is a Cantor set.
We refer to chapter 8 of \citep{BriHae} and to \citep{Pes} for some more detailed background.
\vskip 0.2cm

We observe next that the space of ends of the manifold  $\widetilde{M_g}$  can be identified to the space of ends of a given tree.
In fact, consider $M_g$ with an embedded maximal sphere system $\Sigma$ and let $T_\Sigma$ be
the dual tree to $M_g$ and $\Sigma$ (note that $T_\Sigma$ is trivalent since by maximality of $\Sigma$).
The retraction $h: \widetilde{M_g} \rightarrow T_\Sigma$ induces a homeomorphism between the space $End(T_\Sigma)$ and the space $End(\widetilde{M_g})$. The latter is therefore a Cantor set.
\vskip 0.2cm
\par Now, since $\widetilde{M_g}$ is simply connected, every sphere $\sigma \subset \widetilde{M_g}$ separates, and induces a partition of the space of ends. The partition induced by a sphere $\sigma$ on the space of ends of $\widetilde{M_g}$ coincides with the partition induced by the corresponding edge $e_\sigma$ on the boundary of the dual tree $T_\Sigma$.
\vskip 0.2cm
\par To fix terminology, if $C$ is a Cantor set and $P_1 \doteq C=(P_1  ^+ \cup P_1  ^-)$, $P_2 \doteq C= (P_2  ^+ \cup P_2  ^-)$ are two distinct partitions of the set $C$, we say that $P_1$ and $P_2$ are \emph{non-nested} if all four sets $P_1^+ \cap  P_2^+$, $P_1^+ \cap  P_2^-$, $P_1^- \cap  P_2^+$, $P_1^- \cap  P_2^-$ are non-empty. We say that $P_1$ and $P_2$ are \emph{nested} otherwise.
The following holds:

\begin{lemma} \label{spherepartition}
For any clopen partition $P$ of $End(\widetilde{M_g})$ there is a sphere $s_P$ embedded in $\widetilde{M_g}$ inducing that partition. Two embedded spheres are homotopic if and only if they induce the same partition on $End(\widetilde{M_g})$.
\end{lemma}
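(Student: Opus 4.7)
The plan is to prove existence of $s_P$ first, then to establish the two directions of the homotopy equivalence separately.

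For \textbf{existence}, I would fix a maximal sphere system $\Sigma$ and use the identification $\mathrm{End}(\widetilde{M_g}) \cong \mathrm{End}(T_\Sigma)$ induced by the retraction $h$. Given a clopen partition $P^+ \sqcup P^-$, local finiteness of $T_\Sigma$ together with compactness of $P^\pm$ inside the Cantor set $\mathrm{End}(T_\Sigma)$ lets me pick a finite subtree $K \subset T_\Sigma$ such that every connected component of $T_\Sigma \setminus K$ has all of its ends in $P^+$ or all in $P^-$. Let $R(K) \subset \widetilde{M_g}$ be the preimage under $h$ of a small closed neighborhood of $K$. Since each vertex region of $\widetilde{M_g} \setminus \widetilde{\Sigma}$ is homeomorphic to $S^3$ minus three open balls (by maximality of $\Sigma$) and these glue along spheres, an easy induction shows that $R(K)$ is a compact submanifold homeomorphic to $S^3$ with finitely many open balls removed. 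Its boundary spheres inherit a $\pm$ label from the side of $P$ to which the adjacent component of $T_\Sigma \setminus K$ points, and I would take $s_P$ to be any embedded sphere in the interior of $R(K)$ separating the $+$-labeled boundary spheres from the $-$-labeled ones; such a sphere exists by the obvious topology of $S^3$ minus finitely many balls, and by construction it induces the partition $P$ on $\mathrm{End}(\widetilde{M_g})$.

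The \textbf{forward direction} (homotopic $\Rightarrow$ same partition) is a routine end-argument: a homotopy between $s$ and $s'$ has compact image $H \subset \widetilde{M_g}$, so every end $\{U_n\}$ satisfies $U_n \cap H = \emptyset$ for $n$ large, meaning $U_n$ lies in a single component of $\widetilde{M_g} \setminus s$ and of $\widetilde{M_g} \setminus s'$; connectedness of $H$ pairs these components consistently.

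For the \textbf{backward direction}, assume $s_1$ and $s_2$ induce the same partition $P$. Place them in transverse general position and reduce $|s_1 \cap s_2|$ by a classical innermost-disk surgery: an innermost circle $\gamma$ of $s_1 \cap s_2$ on $s_2$ bounds a disk $D_2 \subset s_2$ with interior disjoint from $s_1$, and cutting $s_1$ along $\gamma$ then regluing one of the two halves to a push-off of $D_2$ produces an embedded sphere which, for the correct choice, bounds a ball. Pushing across this ball strictly decreases the number of intersection circles; after finitely many such moves $s_1$ and $s_2$ are disjoint. Then the region $R = \overline{A_1^+ \cap A_2^-}$, where $A_i^\pm$ denotes the side of $s_i$ whose ends form $P^\pm$, is bounded by $s_1 \cup s_2$ and contains no end of $\widetilde{M_g}$. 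Hence $R$ is compact; being a simply connected compact $3$-manifold with $\partial R \cong S^2 \sqcup S^2$, standard $3$-manifold theory identifies it with $S^2 \times [0,1]$, and the product structure yields an isotopy from $s_1$ to $s_2$.

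The \textbf{main obstacle} is justifying the innermost-disk surgery at each step. Since $\widetilde{M_g}$ is not irreducible, one must show that of the two candidate spheres $D_1 \cup D_2$ and $D_1' \cup D_2$ formed from the two halves of $s_1 \setminus \gamma$, at least one induces the trivial partition on $\mathrm{End}(\widetilde{M_g})$, and that any sphere inducing the trivial partition must bound a ball (via compactness of its end-less side together with a simple connectivity argument). This is the technical heart of the proof and is essentially Laudenbach's innermost-disk argument transplanted from $M_g$ to $\widetilde{M_g}$.
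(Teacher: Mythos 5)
First, note that the paper does not actually prove this lemma: it cites Sections 3--4 of Gadgil--Pandit and Section 2.1.1 of the author's thesis, so there is no in-paper argument to match yours against. Your existence argument (exhaust $T_\Sigma$ by finite subtrees until the clopen sets $P^\pm$ are unions of end-sets of complementary branches, observe that the corresponding region of $\widetilde{M_g}$ is $S^3$ minus finitely many balls, and separate the labelled boundary spheres) is correct and is essentially the standard one. The forward direction is also fine, though ``connectedness of $H$ pairs these components consistently'' should really be phrased via the mod~$2$ intersection number of a proper ray with the sphere, which is invariant under homotopies of the sphere whose image avoids the ray's endpoints.

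The backward direction, however, has a genuine gap, and it sits exactly where you locate ``the technical heart.'' You assert that for an innermost circle $\gamma$, at least one of the two surgered spheres $D_1\cup D_2$, $D_1'\cup D_2$ induces the trivial partition. This does not follow from anything you have set up, and it is false in general: with orientations chosen so that the two surgered spheres are cycles, one has $[D_1\cup D_2]+[D_1'\cup D_2]=[s_1]$ in $H_2(\widetilde{M_g})$, so the two induced partitions merely ``add up'' to that of $s_1$; both can be nontrivial (each cutting off a proper nonempty piece of $P^+$). This is precisely the phenomenon that makes Laudenbach's homotopy-implies-isotopy theorem hard, and it cannot be dismissed as a transplanted innermost-disk argument. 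The good news is that the lemma only asks for \emph{homotopy}, not isotopy, and for that there is a short correct route you bypassed: $\widetilde{M_g}$ is simply connected, so by Hurewicz $\pi_2(\widetilde{M_g})\cong H_2(\widetilde{M_g})$, and $H_2(\widetilde{M_g})$ injects into the group of locally constant functions on $End(\widetilde{M_g})$ modulo constants (check this on the basis given by lifts of the connected-sum spheres, using that a trivalent tree contains, for any finite set of edges, a bi-infinite geodesic through a prescribed one avoiding the others). Hence two embedded spheres inducing the same partition are homologous with compatible orientations, therefore equal in $\pi_2$, therefore freely homotopic. I would replace your entire surgery argument with this; as written, the backward direction is not proved.
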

\begin{lemma} \label{sphereintersection}
Two non-homotopic embedded minimally intersecting spheres $s_1$, $s_2$ in $\widetilde{M_g}$ intersect at most once, they intersect if and only if the partitions induced by $s_1$ and $s_2$ on the space of ends of $\widetilde{M_g}$ are non-nested.
\end{lemma}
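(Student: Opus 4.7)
The plan is to prove the two assertions by combining an innermost-disk surgery with the sphere–partition dictionary of Lemma \ref{spherepartition}.

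For the nesting characterization, one direction is immediate. If $s_1, s_2 \subset \widetilde{M_g}$ are disjoint embedded spheres, then $s_2$ is connected and disjoint from the separating sphere $s_1$, so $s_2$ lies entirely in a single component $X_1^+$ of $\widetilde{M_g}\setminus s_1$; hence all ends in $X_1^-$ lie on a common side of $s_2$, giving the nestedness relation $P_1^- \subset P_2^\epsilon$ for some sign $\epsilon$. Conversely, given nested partitions $P_1, P_2$, I realize $P_1$ by a sphere $s_1$ using Lemma \ref{spherepartition}; each side $X_1^\pm$ is a simply connected $3$-manifold by Van Kampen, with end space naturally identified with $P_1^\pm$, and a second application of Lemma \ref{spherepartition} inside the side containing the relevant piece of $P_2$ produces a sphere $s_2$ disjoint from $s_1$ realizing $P_2$.

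For the bound $|s_1\cap s_2|\le 1$, I argue by contradiction: suppose $s_1, s_2$ are in minimal position with $n \geq 2$ intersection circles. An innermost-disk argument applied first to $s_1$, and then iteratively on $s_2$, produces a pair of disks $D \subset s_1$ and $D' \subset s_2$ sharing a common boundary circle $c$ with $\mathrm{int}(D) \cap s_2 = \mathrm{int}(D') \cap s_1 = \emptyset$. The sphere $\Sigma = D \cup D'$ (smoothed at $c$) bounds a ``lens'' region $R$, and one checks that $\mathrm{int}(R) \cap (s_1 \cup s_2) = \emptyset$: the other disk $D''$ of $s_1$ meets $\Sigma$ only along $c$, is connected, and near $c$ sits on the side of $s_2$ opposite to $D$, so its entire interior lies in the complement of $R$; an analogous argument works for $s_2$. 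If $R$ is a ball, the standard isotopy pushing $D$ across $R$ onto a slight pushoff of $D'$ yields $s_1'\simeq s_1$ with $|s_1'\cap s_2| = n-1 < n$, contradicting minimality.

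The main obstacle is establishing that $R$ is a ball, equivalently that $R$ contains no ends of $\widetilde{M_g}$. My plan is to combine the observation that $R$ lies in a single quadrant $X_1^\epsilon \cap X_2^\delta$ with the first part of the lemma applied to an auxiliary sphere: if $R$ contained an end, Lemma \ref{spherepartition} would yield an essential sphere $\sigma \subset \mathrm{int}(R)$, necessarily disjoint from both $s_1$ and $s_2$; such a $\sigma$ would then serve as a third sphere enabling a further intersection-reducing modification of the pair $(s_1, s_2)$, essentially by rerouting $D$ through the ball bounded by $\sigma$ on the appropriate side, again contradicting minimality. Combining this bound with the nesting characterization, we conclude $|s_1 \cap s_2| \le 1$, with $|s_1 \cap s_2| = 0$ iff the partitions are nested, so $|s_1 \cap s_2| = 1$ iff non-nested.
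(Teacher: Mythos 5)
The paper itself does not prove Lemma \ref{sphereintersection}; it cites Sections 3--4 of Gadgil--Pandit and Section 2.1.1 of \citep{Iez}, so your argument has to stand on its own. Your nesting characterisation is essentially sound: disjointness forces nestedness because $s_2$ lies in one component of $\widetilde{M_g}\setminus s_1$, and conversely nested partitions admit disjoint representatives. Do note, though, that the converse direction quietly invokes a \emph{relative} form of Lemma \ref{spherepartition} --- realising a clopen partition by a sphere inside a prescribed complementary component $X_1^{+}$ --- which is not literally what that lemma asserts and needs a sentence of justification.

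The bound $|s_1\cap s_2|\le 1$ is where the proof has genuine gaps. First, the existence of a circle $c$ that is simultaneously innermost on $s_1$ and on $s_2$ is asserted, not proved: a circle innermost on one sphere need not be innermost on the other, and for $n\ge 4$ nothing prevents the set of $s_1$-innermost circles from being disjoint from the set of $s_2$-innermost circles (take the two dual trees of the circle patterns to be paths with complementary leaf edges); the phrase ``applied first to $s_1$, and then iteratively on $s_2$'' does not describe a terminating procedure. Second, and more seriously, the step you yourself identify as the main obstacle --- that the lens region $R$ is a ball --- is not resolved. If $R$ contains an end, the sphere $\sigma\subset \mathrm{int}(R)$ supplied by Lemma \ref{spherepartition} is \emph{essential}, so neither side of $\sigma$ is a ball and ``rerouting $D$ through the ball bounded by $\sigma$'' has no meaning; no concrete homotopy of $s_1$ or $s_2$ reducing the number of intersection circles is exhibited. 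Indeed, when $R$ contains ends, pushing $D$ across $R$ changes the partition induced by $s_1$ (the ends of $R$ switch sides) and hence the homotopy class of $s_1$, so minimality is not contradicted. This is exactly the known difficulty with innermost-disc surgery on spheres in $M_g$: the surgered sphere is in general not homotopic to the original. The cited references avoid it by arguing constructively rather than by contradiction: given non-nested partitions, realise the four nonempty quadrants $P_1^{\pm}\cap P_2^{\pm}$ by disjoint spheres bounding a four-holed $3$-sphere and exhibit inside it representatives of $P_1$ and $P_2$ meeting in exactly one circle, which gives $i(s_1,s_2)\le 1$ directly. You would need either to adopt such a construction or to supply a genuine proof that some lens region is a ball.
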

We refere to Sections 3 and 4 of \citep{GadPan} or to Section 2.1.1 of \citep{Iez} for proofs of Lemma \ref{spherepartition} and Lemma \ref{sphereintersection}.

\section{Standard form for sphere systems, piece decomposition and dual square complexes} \label{Standard form}
In this section we introduce the definition of a \emph{standard form} for sphere systems, and we describe some properties of this standard form.
Standard form is a refinement of Hatcher's normal form. Loosely speaking, two embedded maximal sphere systems in $M_g$ are in standard form if they intersect minimally and, in addition, their complementary components are as simple as possible.
\par To clarify terminology, if $\Sigma_1$ and $\Sigma_2$ are two sphere systems in $M_g$ and $\widetilde{\Sigma_1}$, $\widetilde{\Sigma_2}$ are their full lifts to the universal cover $\widetilde{M_g}$, we say that $\Sigma_1$ and  $\Sigma_2$ are in \emph{minimal form} if each sphere in $\widetilde{\Sigma_1}$
intersects each sphere in $\widetilde{\Sigma_2}$ minimally.
\par A priori, our definition of minimal form seems stronger than
the most intuitive definition (requiring the number of components of $\Sigma_1 \cap \Sigma_2$ to be minimal over the homotopy class of $\Sigma_1$ and $\Sigma_2$); indeed, both Hatcher's work (\citep{Hat1}), and  Theorem \ref{consequence1} below imply that the two definitions of minimality are equivalent; furthermore, if one of the two systems is maximal, then minimal form is equivalent to Hatcher's normal form (Lemma 7.2 in \citep{HenOP}). However, our arguments will not use the equivalence of these definitions, and will be independent on Hatcher's work.

\begin{definition} \label{def standard form}
Let $\Sigma_1$ and  $\Sigma_2$ be two embedded maximal sphere systems in $M_g$. We say that $\Sigma_1$ and  $\Sigma_2$ are in \emph{standard form} if they are in minimal form with respect to each other and moreover all the complementary components of $\Sigma_1 \cup \Sigma_2$ in $M_g$ are handlebodies. We define in the same way standard form for two maximal sphere systems $\widetilde{\Sigma_1}$ and $\widetilde{\Sigma_2}$ in the universal cover $\widetilde{M_g}$.
\end{definition}


\par The existence of a standard form for any two maximal sphere systems can be deduced from Proposition 1.1 of \citep{Hat1}. We give below an alternative proof (Theorem \ref{consequence1}). We also prove a sort of uniqueness for standard form (Theorem \ref{consequence2}).

\subsection{Piece decomposition}

Given the manifold $M_g$ and two embedded maximal sphere systems $\Sigma_1$ and $\Sigma_2$ in standard form, we coluor $\Sigma_1$ with black and $\Sigma_2$ with red.  We will  call the components of $M_g \setminus (\Sigma_1 \cup \Sigma_2)$ the \emph{3-pieces} of $(M_g, \Sigma_1, \Sigma_2)$, we call the components of $\Sigma_1\setminus \Sigma_2$ the \emph{2-pieces of $\Sigma_1$}, or the \emph{black 2-pieces}, and the components of $\Sigma_2\setminus \Sigma_1$ the \emph{2-pieces of $\Sigma_2$}, or the \emph{red 2-pieces}. Finally, we call the components of $\Sigma_1 \cap \Sigma_2$ the \emph{1-pieces} of $(M_g, \Sigma_1, \Sigma_2)$.
The manifold $M_g$ is the union of $1$-pieces, $2$-pieces and $3$-pieces. We call this collection of pieces a \emph{piece decomposition} for the triple $(M_g, \Sigma_1, \Sigma_2)$.
In the same way we can define a piece decomposition for the triple $(\widetilde{M_g}, \widetilde{\Sigma_1}, \widetilde{\Sigma_2})$. Note that, since the complementary components of maximal sphere systems are simply connected,  pieces of $(M_g, \Sigma_1, \Sigma_2)$ lift homeomorphically to pieces of $(\widetilde{M_g}, \widetilde{\Sigma_1}, \widetilde{\Sigma_2})$; hence, to study the properties of a piece decomposition, we can analyse pieces of $(M_g, \Sigma_1, \Sigma_2)$ or of $(\widetilde{M_g}, \widetilde{\Sigma_1}, \widetilde{\Sigma_2})$, according to what is most convienent for our aim.
In this section we will describe some features of a piece decomposition for a triple $(M_g, \Sigma_1, \Sigma_2)$.
\vskip 0.2cm
Recall that, by maximality, all the components of $M_g \setminus \Sigma_1$ and $M_g \setminus \Sigma_2$ are 3-holed 3-spheres, and that we are assuming hypothesis ($*$) in the introduction.


\vskip 0.2cm
\par We start analysing 1-pieces; since $\Sigma_1$ and $\Sigma_2$ intersect transversely, all 1-pieces of $(M_g, \Sigma_1, \Sigma_2)$ are circles.
\vskip 0.2cm

Now, let $p$ be a 2-piece of $\widetilde{\Sigma_2}$. First, $p$ is a planar surface (for $p$ is a subsurface of a sphere $\sigma \in \widetilde{\Sigma_2}$, and moreover, by maximality and hypothesis ($*$), $\sigma \cap \widetilde{\Sigma_1}$ is non-empty). Further, $p$ is contained in a component of $\widetilde{M_g} \setminus \widetilde{\Sigma_1}$, and, by minimal form and Lemma \ref{sphereintersection},  $p$ cannot intersect the same sphere of $\widetilde{\Sigma_1}$ in more then one circle, which implies that $p$ has at most three boundary components. Summarising, a red 2-piece $p$  embedded in a component $C$ of $\widetilde{M_g} \setminus \widetilde{\Sigma_1}$ is either a disc, or an annulus, or a pair of pants. If  $p$ is a disc, then, by minimal form, $\partial p$ lies on a boundary component of $C$, and $p$ separates the other two components. If $p$ is an annulus or a pair of pants, then different components of $\partial p$ lie on different components of $\partial C$. The same requirements hold true for black 2-pieces in $\widetilde{M_g}$ (by symmetry), and for 2-pieces of $(M_g, \Sigma_1, \Sigma_2)$.



\par Note that these are the same conditions required by Hatcher's normal form.

\vskip 0.2cm
\par As for $3$-pieces, they are all handlebodies by definition of standard form.
Further, the boundary of a 3-piece is the union of black 2-pieces, red 2-pieces, and 1-pieces; where each 1-piece is adjacent to a black 2-piece and a red 2-pieces. Given a 3-piece $P$, we use the term \emph{boundary pattern} for $P$ to refer to the union of 2-pieces and 1-pieces composing $\partial P$. Note that, since the complementary components of maximal sphere systems are 3-holed 3-spheres, the boundary pattern of a 3-piece contains at most three black 2-pieces and three red 2-pieces.
Indeed, standard form imposes further conditions on boundary patterns and it turns out that there are only nine possibilities, which are drawn, from different perspectives,  on the left hand sides of Figure \ref{possiblelinks} and Figure \ref{TABLE 3}.  In particular, the genus of a 3-piece is at most four.
We omit a proof of this here, since it would not be essential to the arguments, and refere to Appendix B of \citep{Iez}.

\vskip 0.2cm
\par In the remainder we will often need to work with the closure of 2-pieces and 3-pieces. Therefore, with a little abuse of terminology, sometimes, we will use the terms \virgolap 2-piece\virgolch and \virgolap 3-piece\virgolch also when refering to the closure of the pieces defined above.

\subsection{Dual square complexes}
\label{Dual square complex}

Given a piece decomposition for the triple $({M_g}, \Sigma_1, \Sigma_2)$, we can construct a dual square complex. We will show later that such a square complex satisfies some very special properties.

\subsubsection{Digression on square complexes} \label{Digression on square complexes}
We make a short digression and recall  some basic definitions and facts concerning square complexes.
Recall that a square complex is a cube complex of dimension 2, i.e, 1-cells are unit intervals and 2-cells are unit euclidian squares. Each 2-cell is attached along a loop of four 1-cells.
We can endow a square complex with a path metric by identifying each 1-cell to the unit interval and each 2-cell to the euclidian square $[0,1] \times [0,1]$.
In the remainder we will use the word \virgolap vertex\virgolch to refer to a 0-cell, the word \virgolap edge\virgolch to refer to a 1-cell and the word \virgolap square\virgolch to refer to a 2-cell.
A square complex is said to be V-H (Vertical-Horizontal) if each edge can be labeled as vertical or horizontal, and on the attaching loop of each square vertical and horizontal 1-cells alternate.
\par An important concept is the concept of \virgolap hyperplane\virgolch, which we shortly define below.
First we define a \emph{midsquare}  as a unit interval contained in a square, parallel to one of the edges, containing the baricenter of the square.
Consider now the equivalence relation $\sim$ on the edges of a square complex $\Delta$, generated by $e \sim e'$  if $e$ and $e'$ are opposite edges of the same square in $\Delta$.
Given an equivalence class of edges [e] in $\Delta$ we define the \emph{hyperplane dual to [e]} as the set of midsquares in $\Delta$ intersecting edges in [e]. Note that hyperplanes of a square complex are connected graphs, and that
 if $\Delta$ is a CAT(0) square complex, then two hyperplanes in $\Delta$ intersect at most once.
As a last note, recall that, by a generalisation of Cartan-Hadamard Theorem (\citep{BriHae} p. 193), a simply connected locally CAT(0) metric space is CAT(0), where the term locally CAT(0) means that each point has a CAT(0) neighbourhood.
We refer to \cite{Sag} for some  background on cube complexes and to \citep{BriHae} for a more detailed discussion on CAT(0) metric spaces.
\vskip 0.3cm

Now, given a triple $(M_g, \Sigma_1, \Sigma_2)$, with $\Sigma_1, \Sigma_2$ in standard form, we can naturally construct a square complex:
vertices correspond to 3-pieces, edges  correspond to 2-pieces and  squares correspond to 1-pieces. For $n=1,2$, an $n$-cell is attached to an $(n-1)$-cell if the piece corresponding to the former lies on the boundary of the piece corresponding to the latter. We denote this complex as $\Delta (M_g, \Sigma_1, \Sigma_2)$ and call it the square complex \emph{dual} to $\Sigma_1$ and $\Sigma_2$ in $M_g$.
We colour with black the edges corresponding to 2-pieces of $\Sigma_1$ and with red the edges corresponding to 2-pieces of $\Sigma_2$.
In the same way we can construct a square complex
$\Delta(\widetilde{M_g},\widetilde{\Sigma_1}, \widetilde{\Sigma_2})$, dual to the triple
$(\widetilde{M_g},\widetilde{\Sigma_1}, \widetilde{\Sigma_2})$.
\par Note that the action of the free group $F_g$ on the manifold
$\widetilde{M_g}$ induces an action of the same group on the complex $\Delta(\widetilde{M_g}, \widetilde{\Sigma_1}, \widetilde{\Sigma_2})$ and the quotient of this square complex under the group action is the square complex  $\Delta(M_g,\Sigma_1, \Sigma_2)$.
\par When no ambiguity can occur we will just denote the complex $\Delta(M_g, \Sigma_1, \Sigma_2)$ as $\Delta$, and
the complex $\Delta(\widetilde{M_g}, \widetilde{\Sigma_1},
\widetilde{\Sigma_2})$ as $\widetilde{\Delta}$.
\par Next we state some properties the complex $\widetilde{\Delta}$ must satisfy (Lemma \ref{prop 0}-Lemma \ref{prop 4}). Proving these Lemmas using the definition of standard form is not difficult. However we defer the proofs to the next section, where we introduce a more abstract way of constructing $\widetilde{\Delta}$.  We refer to Section 2.2 of \citep{Iez} for a direct proof of  Lemma \ref{prop 0}, Lemma \ref{prop 3} and lemma \ref{prop 2}.

\begin{lemma}\label{prop 0}
$\widetilde{\Delta}$ is a connected, locally finite, V-H square complex.
\end{lemma}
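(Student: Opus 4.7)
The plan is to verify the four claims separately, each following directly from the definitions in Section \ref{Standard form}.

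First, $\widetilde{\Delta}$ is a square complex by construction. Vertices correspond to $3$-pieces of $(\widetilde{M_g},\widetilde{\Sigma_1},\widetilde{\Sigma_2})$, edges correspond to $2$-pieces, and squares correspond to $1$-pieces, with the incidence relation "lies on the boundary of". One must check that each square has a well-defined attaching map to a cycle of four edges and four vertices. Given a $1$-piece $\gamma$, which is a circle in $\widetilde{\Sigma_1}\cap\widetilde{\Sigma_2}$, write $\gamma=\sigma_1\cap\sigma_2$ with $\sigma_i$ a sphere in $\widetilde{\Sigma_i}$. A tubular neighbourhood of $\gamma$ in $\widetilde{M_g}$ is modelled on $\gamma\times D^2$, with $\sigma_1,\sigma_2$ corresponding to two transverse diameters of $D^2$; this decomposes the neighbourhood into four quadrants, giving four (not necessarily distinct) incident $3$-pieces, two incident $2$-pieces of $\widetilde{\Sigma_1}$ and two incident $2$-pieces of $\widetilde{\Sigma_2}$, arranged in the standard cyclic order around $\gamma$.

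The V-H property falls out of the same local picture. Colour every edge black if the associated $2$-piece lies in $\widetilde{\Sigma_1}$ and red if it lies in $\widetilde{\Sigma_2}$, and call these colours \emph{horizontal} and \emph{vertical} respectively. From the quadrant description above, the cyclic sequence of $2$-pieces around $\gamma$ alternates between pieces of $\widetilde{\Sigma_1}$ and of $\widetilde{\Sigma_2}$, so the attaching loop of the corresponding square alternates between horizontal and vertical edges, as required.

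For connectedness, fix two $3$-pieces $P_0,P_1$ in $\widetilde{M_g}$. Since $\widetilde{M_g}$ is path-connected, we can join an interior point of $P_0$ to an interior point of $P_1$ by a path, and put this path in general position with respect to $\widetilde{\Sigma_1}\cup\widetilde{\Sigma_2}$ so that it meets this union transversally in finitely many points, each in the interior of a $2$-piece (and avoiding $1$-pieces). The sequence of $3$-pieces traversed gives a sequence of vertices of $\widetilde{\Delta}$ in which consecutive vertices are joined by an edge, hence a path from $P_0$ to $P_1$ in $\widetilde{\Delta}$. Local finiteness, finally, uses the compactness observation from Section \ref{Standard form}: each $3$-piece (respectively $2$-piece) of $(M_g,\Sigma_1,\Sigma_2)$ has a compact closure with finitely many $2$-pieces (respectively $1$-pieces) on its boundary, and pieces of $(\widetilde{M_g},\widetilde{\Sigma_1},\widetilde{\Sigma_2})$ lift homeomorphically from pieces of $(M_g,\Sigma_1,\Sigma_2)$, so the same finiteness holds in $\widetilde{\Delta}$.

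The only delicate point I expect is the V-H verification: one has to be sure that the four $2$-pieces incident to a $1$-piece really are four distinct edges of $\widetilde{\Delta}$ (otherwise the attaching map would not be an embedded $4$-cycle), and similarly that the two incident $3$-pieces on each side of a $2$-piece are genuinely the vertices of the attached square. In the universal cover this is automatic because the relevant local picture unfolds injectively once we work with $\widetilde{\Sigma_1},\widetilde{\Sigma_2}$ rather than their projections; on the quotient $\Delta$ one loses embeddedness but not the V-H structure, which is why the statement is phrased for $\widetilde{\Delta}$.
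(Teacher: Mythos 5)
Your proof is correct, but it takes a genuinely different route from the paper. The paper explicitly defers this lemma (together with Lemmas \ref{prop 1}--\ref{prop 4}) and only obtains it in Section \ref{The core of two trees}: there the core $C(T_1,T_2)\subseteq T_1\times T_2$ is shown to be a connected, locally finite, V-H square complex by a purely combinatorial argument on partitions of the boundary space (Lemma \ref{edge fiber connected}, Lemma \ref{connectedness claim 1}, Lemma \ref{connectedness claim 2}), and Proposition \ref{equalityconstructions} then identifies $\widetilde{\Delta}$ with that core. Your argument is instead the direct topological one from the piece decomposition --- the quadrant picture around a $1$-piece for the square and V-H structure, general position of paths in $\widetilde{M_g}$ for connectedness, and compactness of the pieces downstairs plus homeomorphic lifting for local finiteness --- which is precisely the kind of proof the paper declares \emph{not difficult} and outsources to Section 2.2 of \citep{Iez}. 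What your approach buys is elementarity and self-containedness: connectedness in particular is immediate from path-connectedness of the manifold, whereas in the core picture it is the hardest of the three properties. What the paper's approach buys is uniformity: the same core machinery simultaneously delivers the deeper properties (simple connectedness, finiteness of hyperplanes, the classification of vertex links) and is needed anyway for the inverse construction, so nothing extra is spent routing this easy lemma through it. Your closing worry about the four edges around a $1$-piece being distinct is resolved correctly: the two $2$-pieces of a sphere $\sigma_1$ abutting $\gamma=\sigma_1\cap\sigma_2$ lie in the two different complementary disks of $\gamma$ in $\sigma_1$, so they are distinct, and likewise for $\sigma_2$.
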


We will treat the black edges as horizontal and the red ones as vertical. We will call vertical (resp. horizontal) lines, the segments parallel to vertical (resp. horizontal) edges.

\begin{lemma} \label{prop 1}
The complex $\widetilde{\Delta}$ is endowed with two surjective projections, $p_1$, $p_2$, onto two infinite trivalent trees, $T_1$ and $T_2$. The projection $p_1$ (resp. $p_2$) corresponds to collapsing the vertical (resp. horizontal) lines to points.
\end{lemma}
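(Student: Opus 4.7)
The plan is to build $p_1\colon \widetilde{\Delta}\to T_1$ explicitly as a cellular map, where $T_1 = T_{\Sigma_1}$ is the dual tree from Section~\ref{Spheres, partitions and intersections}; the map $p_2$ to $T_2 = T_{\Sigma_2}$ will be defined by the symmetric recipe. Since $T_1$ already exists as a trivalent infinite tree, it suffices to exhibit $p_1$ as a surjection whose fibres coincide with the vertical-line equivalence relation on $\widetilde{\Delta}$.

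The cellular definition uses that each piece of $(\widetilde{M_g},\widetilde{\Sigma_1},\widetilde{\Sigma_2})$ sits inside a unique cell of $T_1$. I would send a $0$-cell (a $3$-piece $P$) to the vertex $v_{C(P)}$ of $T_1$, where $C(P)$ is the component of $\widetilde{M_g}\setminus\widetilde{\Sigma_1}$ containing $P$; send a black (horizontal) $1$-cell (a $2$-piece $p\subset\sigma\in\widetilde{\Sigma_1}$) homeomorphically onto the edge $e_\sigma$; collapse a red (vertical) $1$-cell (a $2$-piece contained in a component $C$) to $v_C$; and send a $2$-cell (a $1$-piece $c\subset\sigma$), parametrised as $[0,1]\times[0,1]$ with black horizontal, by $(x,y)\mapsto x\in e_\sigma$. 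Compatibility with the attaching maps is a routine check: the two endpoints of a black edge are $3$-pieces on either side of its $2$-piece, lying in the two components of $\widetilde{M_g}\setminus\widetilde{\Sigma_1}$ separated by $\sigma$, and hence mapping to the two endpoints of $e_\sigma$; both endpoints of a red edge lie in $C$; and each square has two black sides on the same sphere $\sigma$ which map identically onto $e_\sigma$, and two red sides which map to the two endpoints of $e_\sigma$.

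Surjectivity of $p_1$ reduces to two statements. Every component $C$ of $\widetilde{M_g}\setminus\widetilde{\Sigma_1}$ contains at least one $3$-piece (trivially), and every sphere $\sigma\in\widetilde{\Sigma_1}$ carries at least one black $2$-piece; the latter is the crucial point and uses hypothesis~($*$), since if $\bar\sigma\in\Sigma_1$ is the projection of $\sigma$, then by ($*$) $\bar\sigma$ is not homotopic to any sphere of $\Sigma_2$, so by maximality of $\Sigma_2$ it meets $\Sigma_2$, and this intersection lifts to give $\sigma\cap\widetilde{\Sigma_2}\neq\emptyset$. Trivalence of $T_1$ is then immediate from maximality of $\widetilde{\Sigma_1}$ (complementary components are $3$-holed $3$-spheres), and $T_1$ is infinite by the free cocompact $F_g$-action. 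Finally, to recognise $p_1$ as the collapse of vertical lines: $p_1$ is constant on every red edge and on every vertical slice of every square, so each maximal vertical geodesic is sent to a single point; conversely the preimage $p_1^{-1}(v_C)$ is the subcomplex spanned by the $3$-pieces and red $2$-pieces inside $C$, which is connected by vertical edge paths because $C$ itself is connected and such a path is built by successive transverse crossings of red $2$-pieces. The main obstacle is the surjectivity step—all the rest is bookkeeping—because without hypothesis~($*$) some edge of $T_1$ would be missed, and so the heart of the proof is really verifying that every sphere of $\widetilde{\Sigma_1}$ meets $\widetilde{\Sigma_2}$.
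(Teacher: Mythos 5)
Your proof is correct, but it takes a genuinely different route from the paper's. The paper never proves Lemma~\ref{prop 1} directly from the piece decomposition: it defers all of Lemmas~\ref{prop 0}--\ref{prop 4} to the abstract core construction of Section~\ref{The core of two trees}, where the projections come for free as restrictions of the product projections $T_1\times T_2\to T_i$, surjectivity is the purely combinatorial Lemma~\ref{connectedness claim 1} (given an edge $e$ of one tree, produce an edge of the other whose boundary partition is non-nested with $P_e$), and the statement then follows from the identification $\widetilde{\Delta}\cong C(T_1,T_2)$ of Proposition~\ref{equalityconstructions}. You instead build $p_1$ cell by cell on $\widetilde{\Delta}$ from the piece decomposition and obtain surjectivity from the topological fact that every sphere of $\widetilde{\Sigma_1}$ meets $\widetilde{\Sigma_2}$ (maximality plus hypothesis ($*$)); via Lemma~\ref{sphereintersection} this is exactly the geometric counterpart of Lemma~\ref{connectedness claim 1}. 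Your argument is shorter and self-contained---essentially the ``direct proof using the definition of standard form'' that the paper alludes to and outsources to \citep{Iez}---whereas the paper's detour buys the isomorphism with the core, which it needs anyway for the inverse construction of Section~\ref{Inverse construction} and for Theorems~\ref{consequence1} and~\ref{consequence2}. One small calibration: since an edge of $\widetilde{\Delta}$ is by definition any component of $\widetilde{\Sigma_1}\setminus\widetilde{\Sigma_2}$, a sphere of $\widetilde{\Sigma_1}$ disjoint from $\widetilde{\Sigma_2}$ would still contribute an edge mapping onto $e_\sigma$, so surjectivity of $p_1$ on the dual complex does not actually hinge on ($*$); what genuinely requires ($*$) (equivalently ($**$)) is surjectivity of the core's projections, because the core contains only edges that bound squares. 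This does not affect the validity of your proof, since ($*$) is a standing hypothesis throughout the paper.
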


Indeed, the trees $T_1$ and $T_2$ correspond to the dual trees to $\Sigma_1$ and $\Sigma_2$.


\begin{lemma} \label{prop 3}
Hyperplanes in $\widetilde{\Delta}$ are finite trees.
\end{lemma}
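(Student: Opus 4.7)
By symmetry I will focus on a hyperplane $H$ dual to a horizontal (black) edge class $[e]$. The first step is to give a concrete combinatorial description of $H$. By Lemma \ref{prop 0} the complex $\widetilde{\Delta}$ is V-H, so opposite edges of any square share a colour. A square of $\widetilde{\Delta}$ corresponds to a 1-piece $C$, which is a circle lying on a unique sphere $\sigma\in\widetilde{\Sigma_1}$ and on a unique sphere in $\widetilde{\Sigma_2}$; the two horizontal edges of this square correspond exactly to the two black 2-pieces on $\sigma$ meeting at $C$. Thus starting from the 2-piece associated with $e$, every opposite-edge transition remains inside a single sphere $\sigma\in\widetilde{\Sigma_1}$, and conversely any two black 2-pieces on $\sigma$ sharing a boundary circle are so related. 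I conclude that $[e]$ is in bijection with the set of black 2-pieces on $\sigma$, and that $H$ is the dual graph of the decomposition of $\sigma$ by the circles of $\sigma\cap\widetilde{\Sigma_2}$: vertices are the 2-pieces, edges are the 1-pieces, and each edge joins the two 2-pieces it separates on $\sigma$.

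Once $H$ is identified with this dual graph, finiteness is the next step. Since $\Sigma_2$ is a finite sphere system in the compact manifold $M_g$, its full preimage $\widetilde{\Sigma_2}$ is locally finite in $\widetilde{M_g}$, so the compact sphere $\sigma$ meets only finitely many spheres of $\widetilde{\Sigma_2}$. By minimal form and Lemma \ref{sphereintersection}, $\sigma$ meets each such sphere in at most one circle. Hence $\sigma\cap\widetilde{\Sigma_2}$ consists of finitely many disjoint circles on $\sigma$, and $H$ has finitely many vertices and edges.

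For the tree property I invoke the standard fact that a disjoint family of $n$ embedded circles on the sphere $\sigma\cong S^2$ separates it into exactly $n+1$ regions (each new circle splits a single region in two), giving $H$ precisely $n+1$ vertices and $n$ edges. Connectedness of $H$ follows because any two 2-pieces on $\sigma$ can be joined by a path in $\sigma$ transverse to the circles, which descends to a path in the dual graph. A connected graph with $n+1$ vertices and $n$ edges is a tree, so $H$ is a finite tree. The case of a vertical edge class is identical after swapping the roles of $\widetilde{\Sigma_1}$ and $\widetilde{\Sigma_2}$.

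\textbf{Main obstacle.} The substantive step is the first: verifying that the opposite-edge equivalence on edges of $\widetilde{\Delta}$ corresponds exactly to lying on the same sphere of $\widetilde{\Sigma_1}$ (resp.\ $\widetilde{\Sigma_2}$), so that hyperplanes are identified with dual graphs of sphere decompositions. Once this geometric dictionary is in place, both finiteness (compactness of $\sigma$ plus local finiteness of $\widetilde{\Sigma_2}$) and treeness (topology of $S^2$) follow almost immediately, which is presumably why the author defers the proof to the cleaner abstract framework of the next section.
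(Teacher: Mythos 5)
Your proof is correct, but it takes a genuinely different route from the one the paper actually uses. The paper deliberately does not prove Lemma \ref{prop 3} directly from the geometry of spheres: it defers the statement until $\widetilde{\Delta}$ has been identified with the core $C(T_1,T_2)$ of the two dual trees (Proposition \ref{equalityconstructions}), and there the hyperplane dual to an edge $e$ of $T_1$ is identified with the subtree $T'_e\subset T_2$ of edges whose end-partitions are non-nested with that of $e$; Lemma \ref{edge fiber connected} then proves that $T'_e$ is a finite subtree purely combinatorially, using nestedness of clopen partitions of the boundary Cantor set (connectedness via the betweenness of partitions along a geodesic, finiteness via compactness of $\partial T'$ along a ray). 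You instead work directly in the manifold: you identify the hyperplane with the dual graph of the decomposition of a single sphere $\sigma$ by the circles $\sigma\cap\widetilde{\Sigma_2}$ (which is exactly the dictionary the paper states without proof in the sentence following the lemma), get finiteness from compactness of $\sigma$, local finiteness of $\widetilde{\Sigma_2}$ and the single-intersection property of minimal form, and get the tree property from the Euler-characteristic/Jordan-curve count that $n$ disjoint circles cut $S^2$ into $n+1$ regions. This is the ``direct proof'' the paper attributes to Section 2.2 of \citep{Iez}. Your approach is more elementary and self-contained at this point of the paper; the paper's approach buys more, since the same core machinery simultaneously yields Lemmas \ref{prop 0}--\ref{prop 4} and is needed later anyway for the inverse construction. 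One presentational remark: your bijection between $[e]$ and \emph{all} black 2-pieces of $\sigma$ already presupposes connectedness of the adjacency graph of 2-pieces on $\sigma$, which you only establish afterwards; this is harmless (the hyperplane is by definition the connected component of $e$ in that graph, and a connected subgraph of a finite tree is a finite tree), but it would be cleaner to state it in that order.
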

Indeed, hyperplanes dual to red (resp. black) edges correspond to spheres in
$\widetilde{\Sigma_1}$ (resp.
$\widetilde{\Sigma_2}$). Edges and vertices of a hyperplane correspond respectively to $1$-pieces and $2$-pieces contained in the corresponding sphere.


\vskip 0.2cm
As for a vertex link in  $\widetilde{\Delta}$, this is entirely determined by the boundary pattern of the 3-piece the vertex corresponds to.
Such link is necessarily a subgraphs of the bipartite graph $K_{(3,3)}$, since a boundary pattern contains at most three red 2-pieces and three black 2-pieces, where two 2-pieces of the same colour cannot be adjacent. Restrictions imposed by standard form make it possible to list all possible boundary patterns and corresponding vertex links (which are described in Figure \ref{possiblelinks}). In fact, the following holds:
\begin{lemma} \label{prop 2}
All the possible vertex links for $\widetilde{\Delta}$ are the nine graphs  listed in Figure \ref{possiblelinks}.
\end{lemma}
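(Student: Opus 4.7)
The plan is a direct enumeration. As recalled just above the statement, the link of a vertex $v$ corresponding to a $3$-piece $P$ has as vertices the $2$-pieces on $\partial P$ (coloured black or red) and as edges the $1$-pieces on $\partial P$; since every $1$-piece lies on exactly one black and one red $2$-piece, $\mathrm{Lk}(v)$ is bipartite, and since $P$ sits inside a $3$-holed $3$-sphere component of $\widetilde{M_g}\setminus\widetilde{\Sigma_1}$ (and symmetrically for $\widetilde{\Sigma_2}$) the boundary pattern has at most three $2$-pieces of each colour, so $\mathrm{Lk}(v)$ embeds in $K_{3,3}$. Moreover, by Lemma \ref{sphereintersection} two minimally intersecting spheres in $\widetilde{M_g}$ share at most one circle, so between a fixed pair of black and red link vertices there is at most one edge, and $\mathrm{Lk}(v)$ is simple.

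First I would record the vertex-degree constraint: because each $2$-piece is either a disc, an annulus, or a pair of pants, the degree of the corresponding link vertex equals the number of boundary circles of that $2$-piece, hence lies in $\{1,2,3\}$. This already cuts the candidate simple bipartite subgraphs of $K_{3,3}$ down sharply. I would then impose the handlebody condition coming from standard form: $\partial P$ is obtained by gluing the closed $2$-pieces along the $1$-pieces, so $\chi(\partial P)$ equals the sum of the $2$-piece Euler characteristics ($1$ for a disc, $0$ for an annulus, $-1$ for pants, with the $1$-pieces contributing $0$), and the resulting $\genus(\partial P)$ must be at most $4$, as noted just before the statement of Lemma \ref{prop 2}.

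The hard part will be eliminating those combinatorial candidates that satisfy the bipartite, degree and genus constraints but do not actually arise from a standard-form piece decomposition — for instance because no compatible embedding of $P$ as a handlebody inside a $3$-holed $3$-sphere exists, or because hypothesis $(*)$ would then force a sphere of $\Sigma_1$ to be isotopic to one of $\Sigma_2$. Rather than perform this case analysis by hand on $\partial P$, I would follow the strategy signalled by the excerpt and derive the classification from the abstract framework of Section \ref{The core of two trees}: the dual square complex can be reconstructed as the Guirardel core of the two dual trees, and properties (1)--(5) of that core translate directly into local constraints on vertex links, from which the nine graphs of Figure \ref{possiblelinks} drop out as the full list. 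This reroutes the delicate topological check into clean tree combinatorics and is the approach I would write up.
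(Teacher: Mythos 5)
You have correctly identified the paper's own route: Lemma \ref{prop 2} is not proved by a direct topological analysis of the $3$-pieces but is deferred to Section \ref{The core of two trees}, where the vertex links of the core $C(T_1,T_2)$ are classified, and then transferred to $\widetilde{\Delta}$ via Proposition \ref{equalityconstructions}. However, as written your reduction is circular at the decisive point: you say that ``properties (1)--(5) of the core translate directly into local constraints on vertex links, from which the nine graphs drop out,'' but property (3) \emph{is} the statement that the vertex links are those nine graphs. The properties are the output of the core analysis, not its input, so nothing can ``drop out'' of them.

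The missing content is the actual enumeration, which is pure boundary-partition combinatorics. A vertex of $T_1\times T_2$ is a pair $(v,v')$; the vertex $v$ induces a partition $\partial T_1 = D_1\cup D_2\cup D_3$ and $v'$ a partition $D_1'\cup D_2'\cup D_3'$ of the same boundary space, the edge $e_i$ at $v$ inducing the two-set partition $D_i\cup(D_j\cup D_k)$, and similarly for $e_j'$. The square $e_i\times e_j'$ lies in the core if and only if these partitions are non-nested, a condition that can be read off the $3\times 3$ table recording which of the nine sets $D_i\cap D_j'$ are nonempty. The admissible tables are constrained by two facts: every row and every column must contain a cross (since the $D_i$ and the $D_j'$ each cover the boundary), and, by hypothesis $(**)$, the union of any row and any column must contain at least two crosses (otherwise some $e_i$ and $e_j'$ would induce the same partition). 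Enumerating such tables up to permuting rows and columns and transposing yields exactly the nine cases of Figure \ref{TABLE 5}, hence the nine links of Figure \ref{possiblelinks}; this enumeration is the proof, and your write-up should contain it. Your opening paragraphs (bipartiteness, simplicity of the link via Lemma \ref{sphereintersection}, degrees in $\{1,2,3\}$, the Euler characteristic count) are correct as far as they go but are then abandoned; note also that the genus-at-most-four bound you invoke is presented in the paper as a \emph{consequence} of the classification, so it cannot be used as an input without separate justification.
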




\begin{figure}
\begin{center}
\includegraphics[scale=0.4]{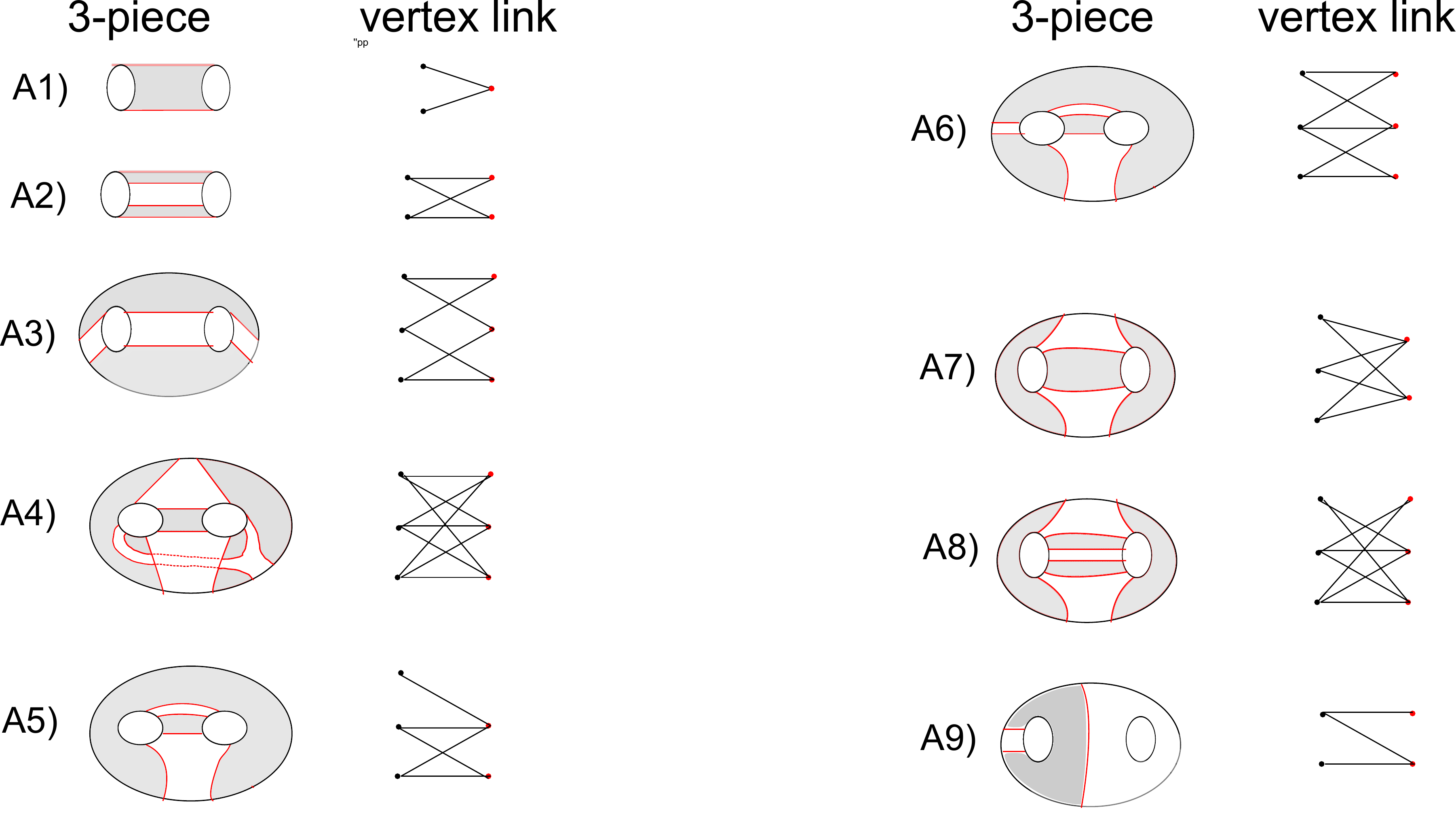}
\caption{All the possibilities for 3-pieces in $M_g$ with the corresponding vertex links. The 3-piece we consider is the part outlined with grey. Pictures are drawn in one dimension less, i.e. we draw a section of each piece, for example a circle represents a sphere, two parallel lines represent an annulus etc. In each picture the three black circles represent three spheres of $\Sigma_1$ bounding a component $C$ of $M_g \setminus \Sigma_1$. The red lines represent the 2-pieces of $\Sigma_2$ in a complementary component  of $\Sigma_1$. }\label{possiblelinks}
\end{center}
\end{figure}


Another important property is the following:
\begin{lemma} \label{prop 4}
The complex $\Delta(\widetilde {M_g}, \widetilde{\Sigma_1}, \widetilde{\Sigma_2})$ is simply connected.
\end{lemma}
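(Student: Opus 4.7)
The plan is to exhibit a continuous map $q : \widetilde{M_g} \to \widetilde{\Delta}$ which is surjective on fundamental groups. Since $\widetilde{M_g}$ is the universal cover of $M_g$, and hence simply connected, this will force $\pi_1(\widetilde{\Delta}) = 0$.

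First I would construct $q$ from the piece decomposition. Choose, once and for all, an interior point $x_P$ in each 3-piece $P$ of $\widetilde{M_g}$, and choose compatible regular neighbourhoods of the 1-pieces (each of the form $c \times D^2$) and collars of the 2-pieces (of the form $p \times (-\epsilon,\epsilon)$). Define $q$ so as to send the bulk of each 3-piece $P$ (its complement in these regular neighbourhoods) to the vertex $v_P$; to send each collar $p \times (-\epsilon,\epsilon)$ onto the edge $e_p$ by projection to the second factor, parametrised so that the two endpoints $v_P$ and $v_{P'}$ match up with the two bulks; and to send each tubular neighbourhood $c \times D^2$ onto the square $s_c$ by collapsing the $c$-factor and collapsing the X-shaped trace of the four adjacent 2-pieces in the $D^2$-factor onto the $1$-skeleton of the square. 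The three partial definitions agree on overlaps thanks to the product structures, so $q$ is continuous, and the restriction of $q$ to the preimage of the interior of each open cell is a surjection.

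Second I would verify that $q_\ast : \pi_1(\widetilde{M_g}) \to \pi_1(\widetilde{\Delta})$ is surjective. Since $\widetilde{\Delta}$ is a $2$-complex, every element of $\pi_1(\widetilde{\Delta}, v_{P_0})$ is represented by a combinatorial loop $\gamma$ in the $1$-skeleton, passing through a sequence $v_{P_0}, v_{P_1}, \ldots, v_{P_n} = v_{P_0}$ along edges $e_{p_0}, \ldots, e_{p_{n-1}}$. I lift $\gamma$ to a loop $\tilde\gamma$ in $\widetilde{M_g}$ as follows: for each $i$, choose a path in the path-connected 3-piece $P_i$ from a chosen entry point on $p_{i-1}$ to a chosen exit point on $p_i$ (taken to lie in the bulk except near the endpoints), concatenate these with short transverse arcs crossing each $p_i$, and close up by a path inside the bulk of $P_0$ from the final entry point back to $x_{P_0}$. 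By the definition of $q$, all pieces of $\tilde\gamma$ contained in the bulk of any 3-piece collapse to the corresponding vertex, while crossings of the collars of the $p_i$ trace out the edges $e_{p_i}$, so $q(\tilde\gamma)$ is homotopic rel basepoint to $\gamma$. Hence $q_\ast$ is surjective, and combined with $\pi_1(\widetilde{M_g}) = 0$ this yields the lemma.

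The main obstacle is the careful bookkeeping required to set up the map $q$ and the lifting procedure, and in particular verifying compatibility on the overlaps of collars of 2-pieces with tubular neighbourhoods of 1-pieces; once the product neighbourhood structures are arranged consistently, the surjectivity argument runs without issue. An alternative proof route, which the paper pursues in the next section, is to identify $\widetilde{\Delta}$ with the core $C(T_1,T_2)$ built abstractly from the dual trees and embedded in the CAT(0) product $T_1 \times T_2$; the abstract construction then delivers simple connectedness directly.
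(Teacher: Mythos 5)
Your argument is correct, but it is a genuinely different route from the one the paper takes. You prove the lemma directly from the topology of $\widetilde{M_g}$: the piece decomposition yields a retraction-type map $q:\widetilde{M_g}\rightarrow\widetilde{\Delta}$ (essentially the product of the two tree retractions $h_i:\widetilde{M_g}\rightarrow T_i$), and since every combinatorial loop in the $1$-skeleton of $\widetilde{\Delta}$ lifts, edge by edge, to a loop in $\widetilde{M_g}$ using only path-connectedness of the $2$- and $3$-pieces, the map $q_\ast$ is surjective on $\pi_1$ and simple connectedness of $\widetilde{M_g}$ finishes the proof. The paper instead deliberately omits a direct proof and obtains Lemma \ref{prop 4} as a corollary of Proposition \ref{equalityconstructions} (identifying $\widetilde{\Delta}$ with the core $C(T_1,T_2)\subset T_1\times T_2$) together with Proposition \ref{C simplyconnected}, whose proof is purely combinatorial: edge- and vertex-preimages under $\pi_T$ are finite trees (Lemmas \ref{edge fiber connected} and \ref{connectedness claim 2}), and a Van Kampen induction over finite subtrees of $T$ gives simple connectedness. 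Your approach is shorter and more elementary, but it only applies when the square complex already sits dual to a pair of sphere systems in a simply connected manifold; the paper's combinatorial argument makes no reference to any ambient $3$-manifold, which is essential because Section \ref{Inverse construction} must start from an abstract square complex (equivalently an abstract core of two trees) and needs its simple connectedness before any manifold has been built. Two small points you should make explicit: invoke cellular approximation to push an arbitrary loop in $\widetilde{\Delta}$ into the $1$-skeleton, and arrange the lifted path to avoid the tubular neighbourhoods of the $1$-pieces so that $q\circ\tilde\gamma$ literally traverses the intended edge path rather than wandering into $2$-cells.
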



We omit a proof of Lemma \ref{prop 4} here, since it will be an immediate consequence of Proposition \ref{C simplyconnected} and Proposition \ref{equalityconstructions}.
Note that simply connectedness of the complex $\widetilde{\Delta}$ implies that $\widetilde{\Delta}$ is the universal cover of the complex $\Delta$ and, as a consequence, the latter complex has the free group $F_g$ as fundamental group.

\section{The core of two trees}
\label{The core of two trees}
In this section we describe an abstract construction. Starting with two trees endowed with geometric actions by the free group $F_g$, we construct a square complex $C$ and call it the \emph{core} of the product of the two trees.
We show later (Proposition \ref{equalityconstructions}) that the method described below provides an alternative way of constructing the square complex dual to two maximal sphere systems in standard form in $\widetilde{M_g}$.
\vskip 0.1cm
Let $T$ and $T'$ be two three-valent trees both endowed with a free, properly discontinuous and cocompact action by the free group $F_g$, call these actions $\rho$ and $\rho'$.

\par Note that the action $\rho$ (resp. $\rho'$) induces a canonical identifications of the group $F_g$ to the Gromov boundary of the tree $T$ (resp. $T'$). Hence the boundaries of  $T$ and $T'$ can also be identified. In view of this identification, in the remainder,  we will often use the term \emph{boundary space}  to refer to the Gromov boundary of $T$ and the Gromov boundary of $T'$.


\vskip 0.1cm
Each edge in $T$ or $T'$ induces a partition on the boundary space.
If $e$ is an edge in $T$, we will denote by $P_e$ the partition induced by $e$ on the boundary space and by $e^+$ and $e^-$ the two sets composing this partition.
If $e$ is an edge in $T$ and $e'$ is an edge in $T'$, we say that the induced partitions $P_e$ and $P_{e'}$ are \emph{non-nested} if no set of one partition is entirely contained in a set of the other partition, namely all  sets $e^+ \cap {e'}^+$, $e^+ \cap {e'}^-$, $e^- \cap {e'}^+$, $e^- \cap {e'}^-$ are non-empty. We say that $P_e$ and $P_{e'}$ are \emph{nested} otherwise.

\par In the remainder we will assume the following hypothesis:
\vskip 0.08cm
\par ($**$) There do not exist an edge $e$ in $T$ and an edge $e'$ in $T'$ inducing the same partition on the boundary space. \label{hypothesis ($**$)}
\vskip 0.08cm
It will be clear in the next section that hypothesis ($**$) above corresponds to hypothesis ($*$) in the introduction.
The construction described below can be generalised to the case where hypothesis ($**$) is not fulfilled. We refer to Section 2.6.3 of \citep{Iez} for a discussion of this more general case.
\vskip 0.1cm
\par Consider now the product $T \times T'$, this is a CAT(0) square complex, where each vertex link is the bipartite graph $K_{3,3}$.
This space can be naturally endowed with a diagonal action $\gamma$.
Namely, given a vertex $(v_1,v_2)$ in $T\times T'$ and an element $g$ of $F_g$, we set $\gamma_g (v_1,v_2)$ to be the vertex $(\rho_g(v_1),\rho'_g(v_2))$. Since $\rho$ and $\rho'$ are free and properly discontinuous, so is $\gamma$.
We go on to define the main object of this section.
\begin{definition}
The \emph{core} of $T$ and $T'$ is the subcomplex of $T \times T'$ consisting of all the squares $e \times e'$ where $e$ is an edge in $T$, $e'$ is an edge in $T'$, and the two partitions induced by $e$ and $e'$ on the boundaries of $T$ and $T'$ are non-nested. We will denote this complex by $C(T,T')$. Where no ambiguity can occur, we will write $C$ instead of $C(T,T')$.
\end{definition}



The complex $C$ is invariant under the diagonal action of the group $F_g$.
In fact, for each $g$ in $F_g$ the maps $\rho_g$ and $\rho'_g$ induce the same homeomorphism on $\partial T= \partial T'$, therefore, the partitions induced by the edges $e$ and $e'$ are nested if and only if the ones induced by the edges $\rho_g(e)$ and $\rho'_g(e')$ are.
Denote by $\Delta(T, T')$  the quotient of $C(T, T')$ by
the diagonal action of the group $F_g$.

\begin{remark}
Note that, in order to define the complex $C(T,T')$, we do not need the group actions but only an identification of $\partial T$ and $\partial T'$. Furthermore, the construction can be applied to any pair of simplicial trees. In this sense, the construction described above, gives a combinatorial way of defining the Guirardel core of two simplicial trees.
\end{remark}

\subsection{Properties of the core}
\label{properties of the core}
This section is aimed at showing that the complex $C(T,T')$ satisfies Lemmas \ref{prop 0}-\ref{prop 4}.

First note that, as a subcomplex of $T \times T'$, the complex $C(T, T')$ is a V-H square complex, and is naturally endowed with two projections: $\pi_T :C \rightarrow T$ and $\pi_{T\rq{}} :C \rightarrow T\rq{}$.
If $e$ is an edge in $T$ (resp. $e'$ is an edge in $T'$), we use the term \emph{preimage} of  $e$ (resp. of $e'$), to denote
preimage of the edge $e$ under the map $\pi_T :C \rightarrow T$ (resp. of the edge
$e\rq{}$ under the map $\pi_{T\rq{}} :C \rightarrow T\rq{}$). In the same way we define the preimages of vertices. The next goal is to prove the following:

\begin{theorem} \label{core connected quotient finite}
The core $C(T,T')$ is a connected square complex and the quotient $\Delta(T,T')$ is finite.
\end{theorem}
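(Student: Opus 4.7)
The plan is to treat both claims via a careful analysis of the sets
\[
A_e := \{e' \in E(T') : P_e \text{ and } P_{e'} \text{ are non-nested}\}
\]
associated to each edge $e$ of $T$. Since $F_g$ acts freely on edges of both trees, the square-orbits of $C$ are parametrised by pairs $(\bar e, e')$ where $\bar e$ ranges over the finite set $E(T)/F_g$ (of $3g-3$ elements) and $e'$ ranges over $A_e$ for a fixed lift $e$ of $\bar e$. So finiteness of $\Delta$ will follow once each $A_e$ is shown to be finite, and the lower-dimensional orbits are bounded in terms of the square orbits.

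For the structure of $A_e$: an edge $e' \in T'$ lies in the convex hull $\mathrm{Hull}_{T'}(X)$ for a subset $X \subseteq \partial T'$ iff both sides of $P_{e'}$ meet $X$, so $A_e = \mathrm{Hull}_{T'}(e^+) \cap \mathrm{Hull}_{T'}(e^-)$, an intersection of convex subtrees of $T'$ and hence itself a subtree. Non-emptiness uses hypothesis ($**$): if every edge of $T'$ were nested with $e$, pick any trivalent vertex $v' \in T'$ with adjacent edges $e'_1, e'_2, e'_3$ and let $Y_i \subseteq \partial T'$ be the boundary of the component beyond $e'_i$. Each $Y_i$ must lie in $e^+$ or in $e^-$; since both $e^{\pm}$ are non-empty and $Y_1 \sqcup Y_2 \sqcup Y_3 = \partial T$, exactly one $Y_i$ coincides with $e^+$ or $e^-$, forcing $P_{e'_i} = P_e$ and contradicting ($**$). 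Finiteness of $A_e$ then follows by showing it contains no geodesic ray: if $(e'_n)_{n\ge 0}$ were a ray in $A_e$ converging to $\xi \in \partial T'$, then $\xi$ lies in exactly one of the clopen sets $e^{\pm}$, say $e^+$; for $n$ large, the side of $e'_n$ containing $\xi$ would sit in a neighbourhood of $\xi$ entirely inside $e^+$, and hence fail to meet $e^-$, contradicting non-nestedness. A locally finite subtree with no ray is finite.

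For connectedness of $C$, I would glue the preimages $\pi_T^{-1}(e) = e \times A_e$, each of which is a connected ``strip'' by the subtree property. To bridge the strips over two edges $e_1, e_2 \subseteq T$ meeting at a trivalent vertex $v$ with third edge $e_3$, it suffices to exhibit a common vertex $v^* \in T'$ belonging to all three subtrees $A_{e_i}$; this yields a point $(v, v^*) \in C$ lying on the boundary of each strip, and connectedness then propagates along the connected tree $T$. I would produce $v^*$ as the median in $T'$ of any three boundary points, one from each $X_i \subseteq \partial T$ (where $X_i$ is the boundary on the far side of $e_i$ from $v$), noting that each $X_i$ is a Cantor set by cocompactness; the richness of the $X_i$ should then allow one to find, for each direction at $v^*$ and each $X_i$, an additional boundary point of $X_i$ in that direction, thereby placing $v^*$ in $\mathrm{Hull}_{T'}(X_i) \cap \mathrm{Hull}_{T'}(X_j \cup X_k)$ for each $i$. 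The main obstacle is precisely this last verification: ensuring, simultaneously for all three subtrees $A_{e_i}$, that the candidate vertex $v^*$ sees boundary points of each $X_i$ in at least two directions, which is a delicate piece of boundary book-keeping and may require choosing the three sample points $\xi_i \in X_i$ with some care rather than arbitrarily.
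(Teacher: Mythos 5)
Your treatment of the finiteness half is essentially the paper's: the fibre $A_e$ is the paper's $T_e'$, your no-ray compactness argument is the same as the proof of Lemma \ref{edge fiber connected}, and deducing finiteness of $\Delta(T,T')$ from cocompactness is Proposition \ref{core finite}; the reformulation $A_e=\mathrm{Hull}_{T'}(e^+)\cap\mathrm{Hull}_{T'}(e^-)$ is a slicker way to get connectedness of each fibre than the paper's explicit betweenness check. But your non-emptiness argument has a gap: nestedness of $P_{e'_i}$ with $P_e$ does \emph{not} force $Y_i\subseteq e^+$ or $Y_i\subseteq e^-$; it also allows $Y_i\supsetneq e^+$ or $Y_i\supsetneq e^-$. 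At an arbitrary trivalent vertex one component may simply swallow $e^+$ whole, in which case no $Y_i$ coincides with $e^+$ or $e^-$ and no contradiction with ($**$) appears. The statement is true, and your own hull formulation repairs it: $\mathrm{Hull}_{T'}(e^+)$ and $\mathrm{Hull}_{T'}(e^-)$ are non-empty subtrees in which every vertex has degree at least two; if they were disjoint, any edge on the bridge between them would induce exactly the partition $P_e$, violating ($**$), and if they met only in a vertex they would still be forced to share an edge there. (The paper instead runs a ray from a nested edge $a$ with $a^+\subset e^+$ towards a point of $e^-$ and examines the first edge whose positive side contains $e^+$; see Lemma \ref{connectedness claim 1}.)

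The more serious gap is the bridging step for connectedness, which you correctly identify as the main obstacle but which cannot be completed as proposed: the common vertex $v^*\in A_{e_1}\cap A_{e_2}\cap A_{e_3}$ need not exist. Since $A_{e_i}\subseteq\mathrm{Hull}_{T'}(D_i)$, whenever some edge $f$ of $T'$ satisfies $D_1\subseteq f^+$ and $D_2\subseteq f^-$ (a configuration compatible with ($**$) provided $D_3$ meets both sides of $f$), the subtrees $\mathrm{Hull}_{T'}(D_1)$ and $\mathrm{Hull}_{T'}(D_2)$, hence $A_{e_1}$ and $A_{e_2}$, are disjoint, and no choice of sample points $\xi_i$ will produce $v^*$. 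This is precisely why the paper proves the weaker but sufficient statement (Lemma \ref{connectedness claim 2}) that the entire vertex fibre $F_v=A_{e_1}\cup A_{e_2}\cup A_{e_3}$ is a single finite tree: one characterises its edges as those $e'$ for which neither ${e'}^+$ nor ${e'}^-$ is contained in any single $D_i$, and checks that this edge set is closed under betweenness in $T'$. In the disjoint-hull scenario it is $A_{e_3}=\mathrm{Hull}_{T'}(D_3)\cap\mathrm{Hull}_{T'}(D_1\cup D_2)$ that contains the bridge and glues the other two fibres together. Replacing your triple-intersection step by connectedness of the union, the rest of your outline (connected strips over edges, propagation along the connected tree $T$ via surjectivity of $\pi_T$) goes through and matches the paper's proof.
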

Note that Theorem \ref{core connected quotient finite} can be deduced from \citep{Gui}, using the fact the the $C(T,T')$ coincides with the Guirardel core of $T$ and $T'$. For the sake of completeness, we give below an independent proof, using mostly combinatorial methods. The proof will consist of several steps.
\vskip 0.2cm
To establish terminology, given an edge $e \in T$, we denote by $T_e '$ the subset of $T'$ consisting of edges $e'$ in $T'$ such that the partitions induced by $e$ and $e'$ are not nested. Note that $T_e '$ coincides to the hyperplane dual to $e$ and that the preimage of an edge $e$ is exactly the interval bundle over $T_e'$.
We will prove the following:

\begin{lemma} \label {edge fiber connected}
For each edge $e \in T$, $T_e'$ is a finite  subtree of $T'$.
\end{lemma}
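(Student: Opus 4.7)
The plan is to reformulate $T_e'$ as an intersection of two convex hulls inside $T'$, and then use the fact that $e^+$ and $e^-$ partition the boundary space to force this intersection to have no ends at infinity; local finiteness of $T'$ then makes it finite. Concretely, for a closed subset $A \subseteq \partial T'$ with at least two points, let $\hull(A) \subseteq T'$ denote the union of all bi-infinite geodesics joining two points of $A$. Unpacking the definitions, an edge $e'$ of $T'$ lies in $\hull(A)$ iff both components of $T' \setminus \rint(e')$ meet $A$, i.e.\ $A \cap (e')^+$ and $A \cap (e')^-$ are both non-empty. Since $e^+$ and $e^-$ partition $\partial T'$, the non-nestedness condition on $e$ and $e'$ is equivalent to the conjunction of $e^+$ having points on both sides of $e'$ and $e^-$ having points on both sides of $e'$. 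Hence
\[
T_e' \;=\; \hull(e^+)\cap \hull(e^-).
\]

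Both hulls are convex subtrees of $T'$, so by the Helly property for subtrees of a tree their intersection is either empty or a subtree; this gives the subtree part of the statement. For finiteness, I would first prove the claim that for any closed $A \subseteq \partial T'$ with $|A|\geq 2$, the set of ends of $\hull(A)$ in $\partial T'$ equals $A$ itself. For one direction: given an end $\xi$ of $\hull(A)$, pick a geodesic ray $\gamma$ in $\hull(A)$ representing $\xi$; each edge of $\gamma$ lies in $\hull(A)$ and so carries a point of $A$ on its far side, and these points converge to $\xi$, giving $\xi \in \overline{A} = A$ by closedness of $A$. For the other direction: for any $\xi \in A$, picking any other $\eta \in A$, the geodesic ray $[\eta,\xi)$ lies in $\hull(A)$, so $\xi$ is indeed an end. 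In our setting both $e^+$ and $e^-$ are clopen (hence closed) in $\partial T'$, and each is infinite, since removing the edge $e$ from the trivalent tree $T$ (cocompactly acted upon by the infinite group $F_g$) produces two infinite subtrees whose sets of ends exhaust $\partial T$; in particular both sides have at least two points, so the claim applies.

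From the claim, any end of $T_e' = \hull(e^+) \cap \hull(e^-)$ is simultaneously an end of $\hull(e^+)$ and of $\hull(e^-)$, hence lies in $e^+ \cap e^- = \emptyset$. So $T_e'$ has no ends. As a subtree of the locally finite tree $T'$, it is itself locally finite, and a locally finite tree without ends is finite (by K\"onig's lemma: an infinite, locally finite, connected graph contains an infinite geodesic ray, which would be an end). Hence $T_e'$ is finite.

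The step I expect to be the main obstacle is the identification of the ends of $\hull(A)$ with $A$ itself: one must ensure that a geodesic ray inside $\hull(A)$ is also a geodesic ray in $T'$ (which follows from convexity of the hull), and that the ``far side'' point of $A$ chosen along each successive edge of the ray actually converges to the prescribed end in the boundary topology. Once this small lemma is in hand, the rest is routine Helly-type bookkeeping plus K\"onig's lemma.
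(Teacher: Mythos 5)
Your proof is correct, but it takes a more structural route than the paper's. The paper argues directly from the partition conditions: for connectedness it shows that if $a,b \in T_e'$ and $c$ lies on the geodesic between them, then the nesting $a^+ \supset c^+ \supset b^+$ forces all four intersections for $c$ to be non-empty; for finiteness it shows that along any geodesic ray $\{e_i'\}$ the sets ${e_i'}^+$ shrink to a single boundary point $p$, and the finite intersection property of the compact sets ${e_i'}^+ \cap e^{\pm}$ would force $p$ to lie in both $e^+$ and $e^-$, a contradiction. Your reformulation $T_e' = \hull(e^+) \cap \hull(e^-)$ packages the same underlying facts (an edge lies in $T_e'$ iff both $e^+$ and $e^-$ have points on both of its sides; a ray remaining in $T_e'$ would converge to a point of $e^+ \cap e^- = \emptyset$) into two standard tree lemmas: the hull of a closed boundary set is a convex subtree whose end set is exactly that set, and intersections of convex subtrees are subtrees. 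What your approach buys is conceptual clarity --- the identity $T_e' = \hull(e^+) \cap \hull(e^-)$ is essentially how the edge fibers of the Guirardel core are usually described, and the argument generalizes beyond trivalent trees --- at the cost of having to establish the hull lemma, which is roughly the same amount of work as the paper's direct compactness argument. Two cosmetic points: what you call the Helly property is really just the fact that the intersection of two convex subtrees is convex (Helly proper is about non-emptiness, which neither you nor the paper claims at this stage); and your verification that $e^+$ and $e^-$ each contain at least two points is fine but could be bypassed, since the identity holds vacuously when one side is a singleton.
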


\begin{proof}

By definition, given $e \in T$, the edge $e'$ belongs to $T_e'$  if and only if all the sets ${e'}^+ \cap e^+$, ${e'}^- \cap e^+$, ${e'}^+ \cap e^-$ and ${e'}^- \cap e^-$ are non-empty.
We first prove that, $T_e'$ is connected, by showing that if two edges $a$ and $b$ in $T'$ belong to $T_e'$, then the geodesic in $T'$ joining $a$ and $b$ is contained in $T_e'$. To prove this, we may suppose
$a^+ \supset c^+ \supset b^+$, and as a consequence $a^- \subset c^- \subset b^-$.
Now, since the sets $b^+ \cap e^+$ and $b^+ \cap e^-$ are non-empty, then the sets $c^+ \cap e^+$ and $c^+ \cap e^-$ are respectively non-empty; since the sets $a^- \cap e^+$ and $a^- \cap e^-$ are non-empty, then the sets $c^- \cap e^+$ and $c^- \cap e^-$ are also non-empty.
Hence, the edge $c$ belongs to $T_e'$.
\vskip 0.1cm
Next, we prove that $T_e'$ is finite.
By connectedness of $T_e'$, it is sufficient to show that, if $r'= \{e'_i\}$ with $i \in \mathbb{N}$  is a geodesic ray in $T'$, then the subset of $r'$ contained in $T_e'$ is finite, i.e.  there exists $I \in \mathbb{N}$ such that the set ${e'_i} ^+$ (or ${e'_i} ^-$) is contained in one of the two sets $e^+$ or $e^-$ for each $i \geq I$.
To prove that, suppose $e_{i+1} ^+ \subset e_i^+$, and note that the limit of the ray $\{e'_i\}$ is a point $p$ in $\partial T'$; we may assume $p$ belongs to $e^+$. Now suppose that both sets $e_i ^+ \cap e^+$ and $e_i ^+ \cap e^-$ are non-empty for each  $i$; then, since these are compact subsets of the compact space $\partial T'$, both sets $(\cap_{i \in \mathbb{N}} e_i ^+) \cap e^+$
and $(\cap_{i \in \mathbb{N}} e_i ^+) \cap e^-$ are non-empty, which leads to a contradiction, since $(\cap_{i \in \mathbb{N}} e_i ^+)= p \in e^+$.
\end{proof}

\begin{remark} \label{fin hyperplanes}
As an immediate consequence of Lemma \ref {edge fiber connected}, all the preimages of edges in $T$ (and by symmetry all the preimages of edges $e' \in T'$) are finite and connected, since they are interval bundles over finite trees, and
hyperplanes in $C(T, T')$ are finite trees.
\end{remark}

As another consequence of Lemma \ref{edge fiber connected} we can prove the following:

\begin{proposition} \label{core finite}
The quotient space $\Delta(T, T')$ is a finite V-H square complex, and hyperplanes in $\Delta(T, T')$ are trees.
\end{proposition}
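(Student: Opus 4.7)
My plan is to transfer the structural properties of $C(T,T')$ to its quotient using the fact that the diagonal action of $F_g$ on $T \times T'$ is free (since the action on $T$ is already free), so the projection $q\colon C(T,T') \to \Delta(T,T')$ is a covering of square complexes. This lets me read off the quotient structure by controlling stabilisers.

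The V--H structure descends immediately: horizontality/verticality of an edge in $T \times T'$ is determined by which factor moves, and the diagonal $F_g$-action preserves each factor, so the labelling passes to the quotient. For finiteness, I would use that the $F_g$-action on $T$ is free and cocompact, so a fundamental domain $D$ for the edges of $T$ is finite. Every square of $C(T,T')$ has the form $e \times e'$, and after replacing by an $F_g$-translate I may assume $e \in D$. Given such $e$, Lemma~\ref{edge fiber connected} says $\{e' : e \times e' \in C\} = T_e'$ is finite. Because the $F_g$-action on the edges of $T$ is free (any $g$ fixing an edge would fix its midpoint), the only $g$ that maps a square $e \times e'$ with $e \in D$ to another square with first coordinate in $D$ is $g = 1$; hence distinct pairs $(e,e') \in D \times T_e'$ represent distinct $F_g$-orbits of squares. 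Thus $\Delta(T,T')$ has exactly $\sum_{e \in D} |T_e'|$ squares, which is finite, and this bounds the number of cells in all dimensions.

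For the hyperplane statement, I would fix a hyperplane $H_\Delta$ of $\Delta(T,T')$ and analyse its preimage under $q$. By Remark~\ref{fin hyperplanes}, every hyperplane of $C(T,T')$ is a finite tree; concretely, a horizontal hyperplane is determined by a single edge $e$ of $T$ and is carried by the squares $\{e \times e' : e' \in T_e'\}$ (and symmetrically for vertical hyperplanes). The preimage $q^{-1}(H_\Delta)$ is a disjoint union of such hyperplanes of $C$, and $F_g$ permutes them transitively. Pick a lift $H$ with determining edge $e$; the setwise stabiliser $\mathrm{Stab}(H) \le F_g$ must preserve the set of first coordinates of squares meeting $H$, which is the single edge $e$, so $\mathrm{Stab}(H) \subseteq \mathrm{Stab}(e) = \{1\}$ by freeness of the action on $T$. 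Hence the covering $H \to H_\Delta$ has trivial deck group and is a homeomorphism, so $H_\Delta$ is a finite tree.

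The main delicate point is the last stabiliser computation: making sure that an element of $F_g$ preserving the hyperplane (not just fixing it pointwise) still has to preserve the associated edge of $T$, so that freeness of the $F_g$-action on $T$ can be invoked. Everything else reduces to routine bookkeeping once the freeness/cocompactness of the $F_g$-action on $T$ and the finiteness of $T_e'$ from Lemma~\ref{edge fiber connected} are in hand.
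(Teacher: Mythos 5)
Your proposal is correct and follows essentially the same route as the paper: finiteness comes from cocompactness of the action on $T$ together with the finiteness of the fibres $T_e'$ from Lemma~\ref{edge fiber connected}, the V--H labelling descends because the diagonal action preserves the factors, and hyperplanes inject into the quotient because freeness of the action on $T$ forces any element identifying two squares over the same edge $e$ to stabilise $e$ and hence be trivial. Your stabiliser computation just spells out in more detail the paper's one-line observation that two squares in the preimage of the same edge cannot be identified under the quotient map.
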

\begin{proof}
Denote as usual by $\rho$ and $\rho'$ the actions of $F_g$ on $T$ and $T'$ respectively, and by $\gamma$ the diagonal action of $F_g$ on $T \times T'$.
Note that, by invariance of the core, if $e$ is any edge in $T$, $g$ is any element of $F_g$, and $F_e$ is the preimage of the edge $e$ in $C(T,T')$, then the preimage $F_{\rho_g(e)}$ of the edge $\rho_g(e)$ is exactly $\gamma_g(F_e)$. Note also that, $T/F_g$ is a finite graph, by cocompactness of the action $\rho$.
Hence, finiteness of edge preimages immediately implies that the complex $\Delta(T, T')$ is finite.
\par Now, $\Delta(T, T')$ is a V-H square complex, since $C(T,T')$ is, and the diagonal action $\gamma$ maps vertical (resp. horizontal) edges to vertical (resp. horizontal) edges.
\par To prove that hyperplanes in $\Delta(T, T')$ are trees, we observe that, since the actions $\rho$ and $\rho'$ are free, two squares belonging to the preimage of the same edge cannot be identified under the quotient map, therefore, the restriction of the quotient map to a hyperplene is a graph isomorphism.
\end{proof}

To prove connectedness of the core $C(T,T')$ we still need some preliminary lemmas. While proving the next lemma, we use hypothesis ($**$), i. e. we suppose that there do not exist an edge in $T$ and an edge in $T'$ inducing the same partition on $\partial T$.

\begin{lemma} \label{connectedness claim 1}
The projections $\pi_T :C \rightarrow T$ and $\pi_{T'} :C \rightarrow T'$ are both surjective.
\end{lemma}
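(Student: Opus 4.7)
The plan is to prove surjectivity of $\pi_T$; the symmetric argument (swapping $T$ and $T'$, using that hypothesis ($**$) is symmetric) then yields surjectivity of $\pi_{T'}$. Unpacking the definitions, $\pi_T$ is surjective if and only if for every edge $e$ of $T$ there is an edge $e'$ of $T'$ with $P_e$ and $P_{e'}$ non-nested, i.e., $T_e'\neq\emptyset$ for every $e$.

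Fix an edge $e$ of $T$. Since $T$ and $T'$ are infinite three-valent trees, the boundary space $\partial T=\partial T'$ is a Cantor set, and $e^+, e^-$ are non-empty clopen subsets. Pick $p\in e^+$, $q\in e^-$ and consider the bi-infinite geodesic $[p,q]$ in $T'$, with vertices $(v_i)_{i\in\mathbb{Z}}$ and edges $e_i=(v_{i-1},v_i)$; let $e_i^+\subset\partial T'$ denote the side of $e_i$ containing $p$. Because $e^+$ is open, $e_i^+\subset e^+$ for $i$ sufficiently negative. On the other hand, every end of $T'$ other than $q$ branches off the geodesic at some finite vertex, so $\bigcup_i e_i^+=\partial T'\setminus\{q\}$; if all $e_i^+$ were contained in $e^+$, then $e^-\subset\{q\}$ would be impossible since $\{q\}$ is not open in a Cantor set. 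Hence $N:=\max\{i:e_i^+\subset e^+\}$ is a well-defined finite integer.

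The idea is to exhibit a non-nested edge at the transition vertex $v_N$. The three branches of $T'$ at $v_N$ decompose $\partial T'=B_p\sqcup B_q\sqcup B_w$, where $B_p=e_N^+$ is accessed through $e_N$, $B_q=e_{N+1}^-$ through $e_{N+1}$, and $B_w$ through the third edge $f$ at $v_N$. By construction $B_p\subset e^+$, and by maximality of $N$ we have $e_{N+1}^+=B_p\cup B_w\not\subset e^+$, so $B_w\not\subset e^+$. Now I split into two cases. If $B_q\not\subset e^-$, then $e_{N+1}$ itself is non-nested: $e_{N+1}^+$ contains $p\in e^+$ and a point of $e^-$ (from $B_w$), while $e_{N+1}^-=B_q$ contains $q\in e^-$ and, by assumption, a point of $e^+$. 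If instead $B_q\subset e^-$, then $B_w\not\subset e^-$, for otherwise $e_N^-=B_q\cup B_w\subset e^-$ combined with $e_N^+\subset e^+$ would force $P_{e_N}=P_e$, contradicting hypothesis ($**$); hence $B_w$ meets both $e^+$ and $e^-$, and the third edge $f$ is non-nested, with $f^+=B_w$ mixed and $f^-=B_p\cup B_q$ meeting both $e^+$ (in $B_p$) and $e^-$ (in $B_q$). The main technical inputs are the Cantor-set fact that isolated points don't exist (ensuring $N$ is finite) and hypothesis ($**$) (used to rule out $P_{e_N}=P_e$); once these are in place, the vertex-local case analysis is elementary.
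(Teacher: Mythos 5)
Your proof is correct and follows essentially the same route as the paper: walk along a geodesic in $T'$ toward a point of $e^-$, locate the transition edge where the $p$-side stops being contained in $e^+$, and resolve the two cases at the transition vertex, using hypothesis ($**$) to rule out equality of partitions. The only (minor) difference is that you start from a bi-infinite geodesic between $p\in e^+$ and $q\in e^-$ and get the initial containment $e_i^+\subset e^+$ directly from compactness, whereas the paper first produces a nested edge $a$ with $a^+\subset e^+$ by invoking the finiteness of $T'_e$ and then runs a ray from $a$.
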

\begin{proof}
We prove that $\pi_T$ is surjective, i. e. for each edge $e$ in $T$ there exists an edge $e'$ in $T'$ such that $e \times e'$ is in $C$ or equivalently such that the partition induced by $e$ and the one induced by $e'$ are non-nested. The same argument can be used to prove that the projection $\pi_{T'}$ is surjective. Let $e$ be any edge in $T$.
As usual we denote by $P_e = e^+ \cup e^-$ the partition induced by the edge $e$.
\vskip 0.07cm
\par First we claim that there are edges $a$ and $b$ in $T'$ such that $a^+ \subset e^+ \subset b^+$.
To prove the claim note first that, since by Remark \ref{fin hyperplanes} the preimage $F_e$ is finite, there exists at least one edge $a$ in $T'$ such that the partitions $P_e$ and $P_a$ are nested.
We may suppose without losing generality $a^+ \subset e^+$. Now, pick a point $p$ in $e^-$ and let $r = a,e'_1, e'_2...$ be the geodesic ray in $T'$ joining the edge $a$ to the point $p$. Since we have $e^- \subset a^-$, the point $p$ belongs to $a^-$. Consequently we have the containment ${e'}_i ^+ \subset {e'} _{i+1} ^+ $ for each ${e_i}' $ in the geodesic ray $r$. The set $\bigcup _i {e'} _i ^+$ coincides with $\partial T' \setminus p$, and therefore it contains $e^+$. Since $e^+$ is compact, there exists a natural number $I$ such that ${e'} _i ^+$ contains $e^+$ for each $i$ greater than or equal to $I$. We choose $I$ to be minimal among the numbers having this property. The edge $e'_I$ is what we were looking for. Denote $e'_I$ by $b$. This proves the claim.
\vskip 0.07cm
\par We will now use this claim to find an edge $e'$ in $T'$ such that the partitions induced by $e$ and $e'$ are non-nested. If we use the same notation as above this edge will be one of the two edges adjacent to $b$.
\par \noindent Let $a$ and $b$ be the edges of $T'$ defined above and let $r$ be the geodesic defined above. Denote by $c$ the edge immediately preceding $b$ on the ray $r$. We know that $c^+ \subset b^+$, that $b^+ \supset e^+$, and that $c^+$ does not contain $e^+$ (this follows from the fact that $b$ is the first edge in the ray $r$ such that $b^+$ contains $e^+$). Recall that, by hypothesis ($**$), no containment can be an equality. There are then two possibilities.
\par -Case 1: $c^+  \nsubseteq e^+$. In this case the partition induced by $e$ and the one induced by $c$ are non-nested.
\par -Case 2: $c^+  \subset e^+$. In this case call $d$ the edge in $T'$ adjacent to both $b$ and $c$ and denote by $v$ the vertex in $T'$ where the edges $c$, $b$, $d$ intersect. We claim that the partitions induced by $d$ and $e$ are not nested. To prove the claim, first observe that the vertex $v$ induces a partition $\partial T' = D_1 \cup D_2 \cup D_3$ where $D_1$ is equal to $c^+$, $D_2$ is equal to $b^-$ and $D_3$ is equal to $\partial  T' \setminus(D_1 \cup D_2)$; hence the partition induced by $d$ is $\partial T' = (D_1 \cup D_2) \cup D_3$.
Now, since $e^+$ strictly contains $c^+ = D_1$ and is strictly contained in $D_1 \cup D_3 = b^+$, both sets $e^+ \cap D_3$ and $e^- \cap D_3$ are non-empty.
Moreover, since $D_1$ is contained in $e^+$ the set $(D_1 \cup D_2) \cap e^+$ is non-empty; and since $e^+$ is contained in $b^+ = D_1 \cup D_3$ then $D_2$ is contained in $e^-$ and therefore the set $(D_1 \cup D_2) \cap e^-$ is non-empty.
Consequently the partitions induced by $e$ and $d$ are non-nested, which concludes the proof.
\end{proof}

We have already shown that the preimage of each edge through the projections $\pi_T$ and $\pi_{T'}$ is connected. The following lemma will allow us to conclude the proof of the connectedness of the core.

\begin{lemma} \label{connectedness claim 2}
The preimage of each vertex is a finite tree, in particular it is connected.
\end{lemma}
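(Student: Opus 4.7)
My plan is as follows. Fix a vertex $v \in T$; the case of a vertex in $T'$ follows by symmetry. Since $T$ is trivalent, $v$ has three incident edges $e_1, e_2, e_3$, which partition $\partial T$ into three subsets $D_1, D_2, D_3$, where $D_i$ is the set of ends lying in the component of $T \setminus \{v\}$ beyond $e_i$. The partition induced by $e_i$ is thus $(D_i,\, D_j \cup D_k)$ for $\{i,j,k\} = \{1,2,3\}$.

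The first step will be to unwind the definition of $C(T,T')$ to identify the preimage $\pi_T^{-1}(v)$. An edge $\{v\} \times e'$ lies in $C$ iff it bounds some square $e_i \times e'$ of $C$, iff $e_i$ and $e'$ are non-nested for some $i$, iff $e' \in T_{e_i}'$ for some $i$; and a vertex $(v, w)$ lies in $C$ iff $w$ is an endpoint of such an $e'$. Consequently $\pi_T^{-1}(v)$ is canonically identified, as a subgraph of $\{v\} \times T'$, with the union $T_{e_1}' \cup T_{e_2}' \cup T_{e_3}'$ inside $T'$. Finiteness of the preimage then follows at once from Lemma \ref{edge fiber connected}, since each $T_{e_i}'$ is finite.

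The heart of the argument will be connectedness. I plan to prove the following claim: if $a$ is an edge of $T'$ non-nested with $e_i$ and $b$ is an edge of $T'$ non-nested with $e_j$, then every edge $c$ on the geodesic in $T'$ from $a$ to $b$ lies in $T_{e_l}'$ for some $l$. After orienting $a$, $c$, $b$ consistently along the geodesic so that $c^+ \supset b^+$ and $c^- \supset a^-$, the hypotheses force $c^-$ to meet both $D_i$ and $D_j \cup D_k$, while $c^+$ meets both $D_j$ and $D_i \cup D_k$. The main obstacle is then a short case analysis: if $c^+$ meets $D_i$, then $c$ is non-nested with $e_i$; otherwise $c^+$ necessarily meets $D_k$, and a further split on whether $c^-$ meets $D_k$ shows that $c$ is non-nested with $e_k$ in one subcase and with $e_j$ in the other.

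Granted the claim, any two edges in $T_{e_1}' \cup T_{e_2}' \cup T_{e_3}'$ are joined by a path in $T'$ whose every edge lies in the same union, so this union is connected. Being a finite connected subgraph of the tree $\{v\} \times T'$, it is a finite tree; together with the identification from the second paragraph, this shows that $\pi_T^{-1}(v)$ is a finite tree, as required.
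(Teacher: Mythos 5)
Your proof is correct and follows the same skeleton as the paper's: identify $\pi_T^{-1}(v)$ with $T_{e_1}'\cup T_{e_2}'\cup T_{e_3}'$, get finiteness from Lemma \ref{edge fiber connected}, and prove connectedness by showing that the geodesic in $T'$ between two edges of the preimage stays in the preimage. Where you genuinely diverge is in how that last step is executed. The paper first proves an intermediate characterisation --- an edge $e'$ lies in $F_v$ if and only if neither ${e'}^+$ nor ${e'}^-$ is contained in any $D_i$ --- and then observes that this condition is manifestly inherited by an intermediate edge $c$ with $c^+\supset b^+$ and $c^-\supset a^-$; the paper invokes hypothesis ($**$) in establishing that characterisation. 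You skip the characterisation entirely and run a direct case analysis identifying which of $e_i$, $e_j$, $e_k$ the intermediate edge $c$ is non-nested with; I checked the three cases ($c^+$ meets $D_i$; $c^+$ misses $D_i$ and $c^-$ meets $D_k$; $c^+$ misses $D_i$ and $c^-$ misses $D_k$) and each closes correctly. Your version is a bit longer at the decisive step but has the small advantage of not using hypothesis ($**$) at all. One cosmetic point: your claim is phrased with $\{i,j,k\}=\{1,2,3\}$, so it implicitly assumes $i\neq j$; the degenerate case where $a$ and $b$ both lie in the same $T_{e_i}'$ is not covered by the case analysis as written, but it is exactly the connectedness statement of Lemma \ref{edge fiber connected}, so nothing is lost.
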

\begin{proof}
Let $v \in T$ be a vertex, denote by $F_v$ its preimage through $\pi_T :C \rightarrow T$. We show that $F_v$ is a finite tree. By symmetry, the same arguments show that for any vertex $v' \in T'$, its preimage through $\pi_{T'}$ is a finite tree.
\par Note first that, if we denote by $e_1$, $e_2$ and $e_3$ the three edges incident to the vertex $v$, then the preimage $F_v$
is the union $T_{e_1}' \cup T_{e_2}' \cup T_{e_3}'$ (recall that $T_{e_i}'$ is the subtree of $T'$ consisting of all the edges $e'$ so that the partitions induced by $e'$ and $e_i$ are non-nested).  Hence $F_v$ is the union of three finite trees; we go on to prove that $F_v$ is connected.

\par To reach this goal, we first state a necessary and sufficient condition for an edge in $T'$ to belong to $F_v$. Then we show that the set of edges in $T\rq{}$ satisfying this condition is connected.
\par  To state the condition, we first observe that $v$ induces a partition of $\partial T$ given by $\partial T =D_1 \cup D_2 \cup D_3$. If $e_1$, $e_2$ and $e_3$ are as above then $e_1$ induces the partition $\partial T =D_1 \cup (D_2 \cup D_3)$, $e_2$ induces the partition $\partial T =D_2 \cup (D_1 \cup D_3)$ and $e_3$ induces the partition $\partial T =(D_1 \cup D_2) \cup D_3$.
We claim that an edge $e'$ in $T'$ belongs to $F_v$ if and only if neither of the sets ${e'} ^+$ and ${e'} ^-$ is entirely contained in any of the $D_i$s. Note that hypothesis ($**$) above implies that it is not possible to have ${e'}^+ =D_i$ or ${e'}^- =D_i$ for any $i$. We go on to prove the claim.
\par A direction  is straightforward, in fact if ${e'} ^+$ (or ${e'} ^-$) is contained in one of the $D_i$s then the partition induced by $e'$ and the one induced by $e_j$ would be nested for each $j=1,2,3$.
Let us prove now that, if for each $i=1,2,3$ we have ${e'}^+ \nsubseteq D_i$ and ${e'}^- \nsubseteq D_i$ then there exists an $i$ such that the partition induced by $e'$ and the one induced by $e_i$ are not nested, i. e. there exists an $i$ such that all the sets ${e'}^+ \cap D_i$, ${e'}^- \cap D_i$, ${e'}^+ \cap {D_i}^C$,
${e'}^- \cap {D_i}^C$ are non-empty.
To prove this, note first that there exists an $i$ such that both sets ${e'}^+ \cap D_i$ and ${e'}^- \cap D_i$ are non-empty; in fact, if this were not true, there would exist an $i$ such that either $e\rq{}^+ =D_i$ or $e\rq{}^- =D_i$,  contradicting hypothesis ($**$).
Now, since ${e'}^+$ is not entirely contained in $D_i$, then the set ${e'}^+ \cap {D_i}^C$ is also non-empty, and since ${e'}^-$ is not entirely contained in $D_i$, then the set ${e'}^- \cap {D_i}^C$ is non-empty, which proves the claim
\vskip 0.07cm
To prove that the set of edges in $T'$ satisfying this condition is connected,
we use a similar argument as in the proof of Lemma \ref{fin hyperplanes}: we show that if two edges $a$ and $b$ in $T'$ belong to $F_v$ then the geodesic segment in $T'$ joining $a$ and $b$ is contained in $F_v$. Suppose $a^+ \supset b^+$, then for any edge $c$ in
the geodesic segment joining $a$ and $b$, we have $a^+ \supset c^+ \supset b^+$ and
$a^- \subset c^- \subset b^-$. Since  $b^+$ is not contained in any of the $D_i$s, neither is $c^+$, and since $a^-$ is not contained in any of the $D_i$s, neither is $c^-$, i. e.  $c$ belongs to $F_v$, which concludes the proof.
\end{proof}

Connectedness of the complex $C(T, T')$ is now an obvious consequence of Lemma \ref{connectedness claim 1}, Lemma \ref{connectedness claim 2}, and Lemma \ref{edge fiber connected}.

Another consequence of Lemma \ref{edge fiber connected} and Lemma \ref{connectedness claim 2} is the following:

\begin{proposition} \label{C simplyconnected}
The complex $C$ is simply connected
\end{proposition}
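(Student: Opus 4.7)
The plan is to realize $C$ as an increasing union of contractible finite subcomplexes. Choose an exhaustion $S_0 \subset S_1 \subset \cdots$ of $T$ by finite subtrees with $\bigcup_n S_n = T$, arranged so that $S_0$ consists of a single vertex and each $S_{n+1}$ is obtained from $S_n$ by adjoining a single edge $e$ together with its new endpoint $w$. Set $C_n = \pi_T^{-1}(S_n)$, so that $C = \bigcup_n C_n$. I will show by induction on $n$ that each $C_n$ is contractible; since any continuous map $S^1 \to C$ has compact image and therefore factors through some $C_n$, simple connectedness of $C$ will follow. The base case is Lemma~\ref{connectedness claim 2}, which says $C_0$ is a tree.

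For the inductive step, let $v \in S_n$ be the endpoint of the new edge $e$ that already lies in $S_n$. The central observation is that $\pi_T^{-1}(\bar{e})$ is canonically identified with the product $T_e' \times [0,1]$, where $T_e'$ is the finite subtree of $T'$ from Lemma~\ref{edge fiber connected}; under this identification, $T_e' \times \{0\}$ embeds as a subtree of the fiber $F_v \subset C_n$, and $T_e' \times \{1\}$ embeds as a subtree of $F_w$. Therefore
$$C_{n+1} = C_n \cup \bigl(T_e' \times [0,1]\bigr) \cup F_w,$$
and the intersection $C_n \cap \bigl((T_e' \times [0,1]) \cup F_w\bigr)$ equals $T_e' \times \{0\}$. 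The subcomplex $\bigl(T_e' \times [0,1]\bigr) \cup F_w$ deformation retracts onto $F_w$ by collapsing the interval factor toward $1$, and both $F_w$ (by Lemma~\ref{connectedness claim 2}) and the intersection $T_e' \times \{0\}$ are trees, hence contractible. Applying van Kampen's theorem to suitable open thickenings of the two pieces, together with the inductive hypothesis, yields that $C_{n+1}$ is simply connected, completing the induction.

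The main obstacle is purely combinatorial bookkeeping: one must verify that $\pi_T^{-1}(\bar{e})$ genuinely has the product structure $T_e' \times [0,1]$ and that the two slices $T_e' \times \{0\}$ and $T_e' \times \{1\}$ sit inside $F_v$ and $F_w$ respectively as subtrees. Both facts follow directly from the definition of $C$ as a subcomplex of $T \times T'$ together with the characterization of $F_v$ as the union $T_{e_1}' \cup T_{e_2}' \cup T_{e_3}'$ used in the proof of Lemma~\ref{connectedness claim 2}, so no new geometric input beyond Lemmas~\ref{edge fiber connected} and~\ref{connectedness claim 2} is required.
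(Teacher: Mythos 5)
Your proof is correct and follows essentially the same route as the paper: reduce to preimages of finite subtrees of $T$ via compactness, then induct on the number of edges using Van Kampen's theorem, with Lemma~\ref{edge fiber connected} and Lemma~\ref{connectedness claim 2} supplying simple connectedness of the edge and vertex preimages and of their tree intersections. You merely make explicit the inductive step that the paper leaves as a sketch.
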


\begin{proof}
Any loop $l$ in $C$ would project, by compactness, to a finite subtree of $T$, therefore, in order to prove that $C$ is simply connected, it is sufficient to show that, for any finite subtree $S$ of $T$, the preimage $\pi_T ^{-1} (S)$ (which we denote as $F_S$), is simply connected.
\par Now note that, if $S$ is a finite subtree of $T$, then $F_S$ is the union over all the edges $e$  and all the vertices $v$ in $S$ of the preimages $F_e$ and $F_v$, which are all simply connected by Lemma \ref{edge fiber connected} and Lemma \ref{connectedness claim 2}. Moreover, distinct preimages intersect, if at all, in a finite tree. Now, simple connectedness of $F_S$ can be proven by using induction on the number of edges in $S$, and Van Kampen's theorem.
\end{proof}

As a consequence of Proposition \ref{C simplyconnected}, the fundamental group of the quotient $\Delta(T, T')$ is the free group $F_g$.
\vskip 0.1cm

Next we try to understand how the vertex links in $C(T,T')$ look like.  \label{pagetable5}
First note that they are all subgraphs of the bipartite graph $K_{3,3}$, since the complex $C(T,T')$ is contained in the product $T \times T'$. We show next that all possible vertex links in $C(T, T')$ are the nine graphs described in Figure \ref{TABLE 5}, which coincide with the nine graphs listed in Figure \ref{possiblelinks}.

\par Consider a vertex $(v,v')$ in $T \times T'$, and let us try to understand what its link in $C(T, T')$ is (this link is empty in the case where $(v,v')$ is not in $C(T, T')$).

Denote by $e_1$, $e_2$ and $e_3$ (resp. $e_1'$, $e_2'$ and $e_3'$) the three edges incident to $v$ in $T$ (resp. to $v'$ in $T'$). Recall that $v$ induces a partition $\partial T = D_1 \cup D_2 \cup D_3$, where the edge $e_1$ induces the partition $\partial T = D_1 \cup (D_2 \cup D_3)$;
the edge $e_2$ induces the partition $\partial T = D_2 \cup (D_1 \cup D_3)$; the edge $e_3$ induces the partition $\partial T = (D_1 \cup D_2) \cup D_3$; 
the same holds for the vertex $v'$ in $T'$.
Now, $(v,v')$ is incident to nine squares in $T \times T'$, and understanding how the link of $(v,v')$ looks like boils down to understanding which of the squares $e_i \times e_j$ belong to the core, i. e. for which $i$s and $j$s the partitions induced by $e_i$ and $e_j$ are non-nested.
Namely, the link of the vertex $(v,v')$
will consist of two sets of at most three vertices: a black set representing the edges $e_1$, $e_2$ and $e_3$ and a red set representing the edges $e_1'$, $e_2'$ and $e_3'$, where the $i$th black vertex and the $j$th red vertex are adjacent if and only if  the partitions induces by the edges $e_i$ and $e_j'$ are not nested.

\par We can deduce the link of $(v,v')$ by analysing a simple $3 \times 3$ table. This table has a cross in the slot $(i,j)$ if the set $D_i \cap D_j '$ is non-empty, and a circle in the slot $(i,j)$ if the set $D_i \cap D_j '$ is empty. In the caption to Figure \ref{TABLE 5} we explain how to deduce from the position of crosses and circles whether, for $i,j=1,2,3$, the partitions induced by the edges $e_i$ and $e_j'$ are nested.

\par  It is not difficult to analyse systematically all the possible vertex tables.  These are 3 by 3 tables whose entries can be only crosses or circles. Moreover, they have to satisfy some additional condition: first, since $\partial T = D_1 \cup D_2 \cup D_3 = D_1' \cup D_2' \cup D_3'$, there has to be at least a cross in each row and column of the table; second, by hypothesis ($**$),
the union of a row and a column must contain at least two crosses (see Figure \ref{forbidden cases} for an example of these \virgolap forbidden patterns\virgolch). Furthermore permuting the order of rows or columns in the table, or reflecting the table through the diagonal would not change the vertex link.

\begin{figure}
\begin{center}
\includegraphics[scale=0.2]{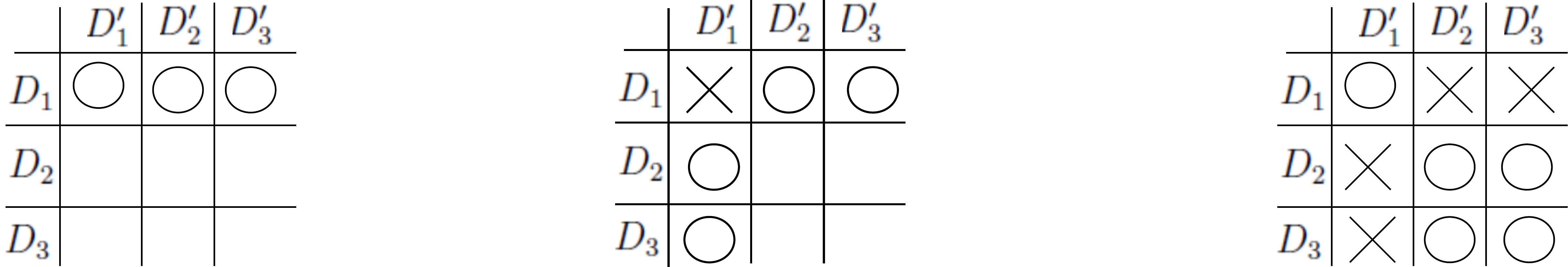}
\caption{The patterns drawn above are \virgolap forbidden\virgolch in vertex tables. In fact, the pattern on
the left hand side of the figure would imply that $\partial T$ is empty;  the pattern in the center would imply $D_1= D_1'$, consequently $e_1$ and $e_1'$ would induce the same partition, contradicting hypothesis ($**$);  the pattern on
the right hand side of the figure would imply $D_1= D_2' \cup D_3'$,  contradicting again hypothesis ($**$)
}\label{forbidden cases}
\end{center}
\end{figure}

\begin{figure}
\begin{center}
\includegraphics[scale=0.25]{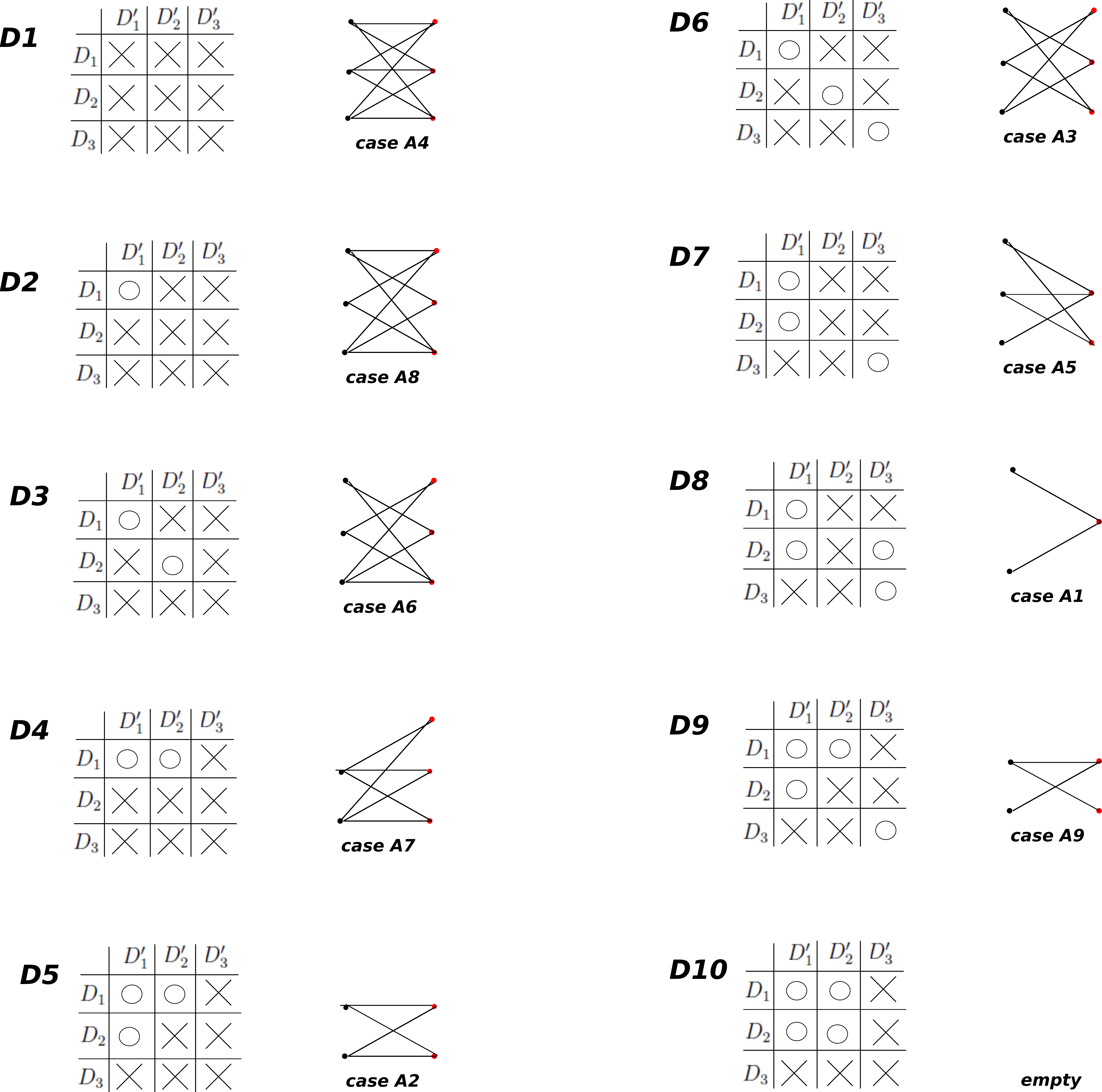}
\caption{This figure describes all the possible vertex tables.
As mentioned, for a vertex $(v,v')$ in $T \times T'$ we draw a $3 \times 3$ table.
The slot $(i,j)$ contains a cross if the set $D_i\cap D_j'$ is non-empty and a circle otherwise. The partitions induced by the edges $e_i$ and $e_j'$ are non-nested if and only if
the table corresponding to $(v,v')$ satisfies the following four properties: the slot $(i,j)$ contains a cross; the row $i$ contains at least another cross; the column $j$ contains at least another cross;  the complement of the row $i$ and the column $j$ contains at least one cross.
At the right hand side of each vertex table we draw the vertex link. We relate the graphs listed here to the ones listed in Figure \ref{possiblelinks} } \label{TABLE 5}
\end{center}
\end{figure}

\par Figure \ref{TABLE 5} gives an exhaustive list of all such $3$ by $3$ tables up to permutation of  rows or columns and reflection around the diagonal, and the vertex link associated to each table.

\vskip 0.2cm
Summarising, given two trivalent trees $T$ and $T'$ endowed with actions by the group $F_g$, the core $C(T,T')$ satisfies the following properties:   \label{properties core}
\par (1) $C(T,T')$ is a connected V-H locally finite square complex
\par (2) hyperplanes in $C(T,T')$ are finite trees
\par (3) all possible vertex links in $C(T,T')$ are the nine graphs listed in Figure \ref{TABLE 5} (which, in particular, implies that $C(T,T')$ is locally CAT(0))
\par (4) $C(T,T')$ is simply connected and endowed with a free properly discontinuous action by the free group $F_g$
\par (5) collapsing the vertical or horizontal lines to points yields projections of $C(T,T')$ to two trivalent trees
\vskip 0.1cm
On the other hand, the complex $\Delta(T, T')$ satisfies properties 2) and 3), and the following:
\par (1') $\Delta$ is a connected V-H finite square complex
\par (4') the fundamental group of $(\Delta(T, T')$ is the free group $F_g$
\par (5') collapsing the vertical or horizontal lines to points yields projections of $C(T,T')$ to two trivalent graphs

\vskip 0.2cm
Note that  properties (1)-(5) above are exactly the properties stated in Lemma \ref{prop 0}-Lemma \ref{prop 4}.
\par Indeed, as a consequence of Proposition \ref{equalityconstructions} and of the construction described in Section \ref{Inverse construction}, the above properties identify the core of two trivalent trees, i. e.
any square complex satisfying properties (1)-(5) can be realised as the core of two trivalent trees.

\subsection{Equality of the two constructions}
In this section we relate the construction of Section \ref{Dual square complex} to the one of Section \ref{The core of two trees}. Namely, we show that the square complex dual to two maximal sphere systems in standard form can always be realised as the core of two trivalent trees. A different proof of this result can be found in \citep{Hor} (Proposition 2.1).
\vskip 0.2cm
\par As usual, let $\Sigma_1$, $\Sigma_2$ be two maximal sphere systems in the manifold $M_g$ in standard form and let $\widetilde{\Sigma_1}$, $\widetilde{\Sigma_2}$ be the lifts to the universal cover $\widetilde{M_g}$. Let $T_1$ and $T_2$ be the dual trees to $\Sigma_1$ and $\Sigma_2$ respectively. Note that the trees $T_1$ and $T_2$ are both trivalent and endowed with a free properly discontinuous action by the group $F_g$ (induced by the action of $F_g$ on $\widetilde{M_g}$), and the product $T_1 \times T_2$ is endowed with the diagonal action.
The complex $\Delta(\widetilde{M_g}, \widetilde{\Sigma_1}, \widetilde{\Sigma_2})$ is also endowed with a free properly discontinuous action of the free group $F_g$, induced by the action of $F_g$ on the manifold $\widetilde{M_g}$.
We prove the following:

\begin{proposition} \label{equalityconstructions}
The complex $\Delta(\widetilde{M_g}, \widetilde{\Sigma_1}, \widetilde{\Sigma_2})$ is isomorphic to the core $C(T_1,T_2)$.
\end{proposition}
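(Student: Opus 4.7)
The plan is to construct an explicit cellular, $F_g$-equivariant map
\[
\phi \colon \widetilde{\Delta} = \Delta(\widetilde{M_g},\widetilde{\Sigma_1},\widetilde{\Sigma_2}) \longrightarrow T_1 \times T_2
\]
and verify that it is an isomorphism of square complexes onto $C(T_1,T_2)$. I would define $\phi$ on cells by the obvious identifications: a $3$-piece $P$ goes to $(v_A,v_B)$, where $A, B$ are the components of $\widetilde{M_g}\setminus\widetilde{\Sigma_1}$ and $\widetilde{M_g}\setminus\widetilde{\Sigma_2}$ containing $P$; a black $2$-piece on the sphere $\sigma_1$ lying in the component $B$ goes to the horizontal edge $e_1\times\{v_B\}$ (symmetrically for red $2$-pieces); and a $1$-piece $c=\sigma_1\cap\sigma_2$ goes to the square $e_1\times e_2$. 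Cellularity, $F_g$-equivariance, and compatibility with the two projections are immediate. By Lemma \ref{sphereintersection}, a $1$-piece corresponds to a pair of intersecting spheres, hence to non-nested partitions, so $\phi(\widetilde{\Delta})\subseteq C(T_1,T_2)$.

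I would first check that $\phi$ is a bijection on squares. Given $e_1\times e_2\in C$, the non-nested hypothesis and Lemma \ref{sphereintersection} force $\sigma_1,\sigma_2$ to meet, and minimal form forces them to meet in exactly one circle, which is the unique $1$-piece sent to $e_1\times e_2$.

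For the bijection on edges it suffices, by symmetry, to treat black edges. The hyperplane $H_{e_1}\subseteq\widetilde{\Delta}$ dual to $e_1$ is, by Lemma \ref{prop 3}, a tree whose vertices are the $2$-pieces of $\sigma_1$ and whose edges are the circles of $\sigma_1\cap\widetilde{\Sigma_2}$, and $\phi$ restricts to a graph map $H_{e_1}\to T_{e_1}'\subseteq T_2$. By minimal form each sphere of $\widetilde{\Sigma_2}$ meets $\sigma_1$ in at most one circle, so at each $2$-piece the incident edges of $H_{e_1}$ have distinct images in $T_2$; a locally injective graph map between trees is globally injective. Surjectivity onto $T_{e_1}'$ follows because any $v_B\in T_{e_1}'$ is incident to some $e_2\in T_{e_1}'$, so $B$ has a boundary sphere meeting $\sigma_1$, whence $\sigma_1\cap B$ is non-empty and (by the injectivity just shown) is a single $2$-piece.

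The vertex bijection is the most delicate step. Fix $v_A\in T_1$ and introduce the graph $G_A$ whose vertices are the $3$-pieces in $A$ and whose edges are the red $2$-pieces in $A$; this is connected since $A$ is. By the edge bijection applied to red edges, distinct red $2$-pieces on the same sphere of $\widetilde{\Sigma_2}$ cannot both lie in $A$, so any two red $2$-pieces incident to a common $3$-piece $P\subset A$ lie on distinct spheres. This gives local injectivity of the natural graph map $G_A\to T_2$; since closed walks in the tree $T_2$ must backtrack, any cycle in $G_A$ would force two incident edges of $G_A$ to share an image, contradicting local injectivity. Hence $G_A$ is a tree and $G_A\to T_2$ is globally injective, so distinct $3$-pieces in $A$ lie in distinct components of $\widetilde{M_g}\setminus\widetilde{\Sigma_2}$, yielding vertex-injectivity of $\phi$. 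Surjectivity on vertices is automatic since every vertex of $C$ is a corner of some square $e_1\times e_2\in C$, hence the image of one of the four $3$-pieces at the corresponding $1$-piece. Assembling the four cellular bijections produces the required isomorphism $\phi\colon\widetilde{\Delta}\to C(T_1,T_2)$. The main obstacle is this bootstrapping tree-injectivity argument, which converts the geometric condition of minimal form into the combinatorial rigidity needed to control distinct pieces mapping to the same vertex of $T_1\times T_2$.
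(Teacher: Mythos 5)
Your proof is correct and follows the same basic strategy as the paper: realise $\widetilde{\Delta}$ inside $T_1\times T_2$ via the two collapsing projections and then match squares with squares of the core using Lemma \ref{sphereintersection}. The difference is one of thoroughness. The paper only verifies that the map is injective on squares (one square per pair $(e_1,e_2)$, since spheres in minimal form meet at most once) and then checks the square-level equivalence \virgolap non-empty intersection $\Leftrightarrow$ non-nested partitions\virgolch; it does not explicitly rule out two distinct $3$-pieces landing on the same vertex $(v_A,v_B)$, or two distinct $2$-pieces of $\sigma_1$ landing on the same edge $e_1\times\{v_B\}$. Your hyperplane argument for edges and the graph $G_A$ argument for vertices (local injectivity into a tree implies global injectivity) supply exactly these missing verifications, which amount to showing that each slice $A\cap B$ and each intersection $\sigma_1\cap B$ is connected; this is a genuine strengthening of the written proof rather than a different route. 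One caveat: you invoke Lemma \ref{prop 3} (hyperplanes of $\widetilde{\Delta}$ are finite trees), but in the paper's logical ordering that lemma is \emph{deduced from} Proposition \ref{equalityconstructions}, so citing it here is circular. The fix is easy and already implicit in your argument: you only need that $H_{e_1}$ is connected, which follows from connectedness of the sphere $\sigma_1$, and then tree-ness is a consequence of your local-injectivity argument into $T_2$ rather than a hypothesis.
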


\begin{proof}
For the remainder of the proof we denote the square complex $\Delta(\widetilde{M_g}, \widetilde{\Sigma_1}, \widetilde{\Sigma_2})$ by $\widetilde{\Delta}$. We first claim that
$\widetilde{\Delta}$ can be identified to a subcomplex of the product $T_1 \times T_2$

In fact, collapsing the vertical (resp. horizontal) lines to points yields equivariant projections $p_1 :\widetilde{\Delta} \rightarrow T_1$  (resp. $p_2 :\widetilde{\Delta} \rightarrow T_2$): a square in $\widetilde{\Delta}$, representing an intersection between a sphere $\sigma_1$ in $\widetilde{\Sigma_1}$ and a sphere $\sigma_2$ in $\widetilde{\Sigma_2}$, gets projected through $p_1$ to the edge $e_1 \in T_1$ representing $\sigma_1$ and through $p_2$  to the edge $e_2 \in T_2$ representing $\sigma_2$.  Moreover, since, by standard form, a sphere in $\widetilde{\Sigma_1}$ and a sphere in $\widetilde{\Sigma_2}$ can intersect at most once, then, given edges $e_1 \in T_1$ and $e_2 \in T_2$, there is at most one square $s$ in $\widetilde{\Delta}$ so that $p_1(s)$ is $e_1$ and $p_2(s)$ is $e_2$. Hence the claim holds.
\vskip 0.1cm
To conclude the proof, we show that each square in $\widetilde{\Delta}$ is contained in $C(T_1, T_2)$ and vice versa. Let $\sigma_1$ be a sphere in $\widetilde{\Sigma_1}$, $\sigma_2$ be a sphere in $\widetilde{\Sigma_2}$, and let $e_1$ and $e_2$ be the edges of $T_1$ and $T_2$ representing $\sigma_1$ and $\sigma_2$ respectively. Now, the square $e_1 \times e_2$  is in $\widetilde{\Delta}$ if and only if $\sigma_1 \cap \sigma_2$ in non-empty, if and only if (Lemma \ref{sphereintersection}) the partitions induced by $\sigma_1$ and $\sigma_2$ on $End(\widetilde{M_g})$ are non-nested, if and only if the partitions induced by $e_1$ and $e_2$ on the boundary $\partial T_1 = \partial T_2$ are non-nested, if and only if $e_1 \times e_2$ is in $C(T_1, T_2)$.
\end{proof}

As a consequence, the complex $\Delta(\widetilde{M_g},\widetilde{\Sigma_1}, \widetilde{\Sigma_1})$ satisfies properties (1)-(5).
Hence Lemmas \ref{prop 0}-\ref{prop 4} hold.

Note that, if we see $\Delta(\widetilde{M_g},\widetilde{\Sigma_1}, \widetilde{\Sigma_1})$ as a subcomplex of $T_1 \times T_2$, then the $F_g$-action on this square complex induced by the $F_g$-action on the manifold $\widetilde{M_g}$ coincides with the diagonal action of $F_g$ on the product $T_1 \times T_2$. Therefore an immediate consequence of Theorem \ref{equalityconstructions} is the following:

\begin{corollary} \label{cor equality constructions}
The square complex $\Delta(M_g, \Sigma_1,\Sigma_2)$ is isomorphic to the square complex $\Delta(T_1, T_2)$.
\end{corollary}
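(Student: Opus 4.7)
My plan is to deduce the corollary from Proposition \ref{equalityconstructions} by showing that the isomorphism constructed there is equivariant with respect to the two relevant $F_g$-actions, and then passing to quotients. The remark immediately preceding the corollary already notes the crucial fact, so the proof is essentially a matter of packaging this observation correctly.

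First I would recall the isomorphism $\phi : \widetilde{\Delta} \to C(T_1,T_2)$ from Proposition \ref{equalityconstructions}. Concretely, $\phi$ is determined by the pair of projections $(p_1, p_2)$: a square in $\widetilde{\Delta}$ corresponding to an intersection circle $\sigma_1 \cap \sigma_2$ (with $\sigma_i \in \widetilde{\Sigma_i}$) is sent to $e_{\sigma_1} \times e_{\sigma_2}$ in $T_1 \times T_2$, where $e_{\sigma_i}$ is the edge of $T_i$ dual to $\sigma_i$, and similarly on lower-dimensional cells.

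Next I would verify equivariance. The deck action of $F_g$ on $\widetilde{M_g}$ permutes the spheres of $\widetilde{\Sigma_i}$ and the complementary components, hence induces the standard $F_g$-action $\rho_i$ on the dual tree $T_i$; by construction of $\widetilde{\Delta}$, each projection $p_i : \widetilde{\Delta} \to T_i$ is $F_g$-equivariant. Consequently, for every $g \in F_g$ and every cell $c$ of $\widetilde{\Delta}$,
\[
\phi(g \cdot c) = \bigl(p_1(g \cdot c), p_2(g \cdot c)\bigr) = \bigl(\rho_{1,g}(p_1(c)), \rho_{2,g}(p_2(c))\bigr) = g \cdot \phi(c),
\]
where the last equality uses the definition of the diagonal action $\gamma$ on $T_1 \times T_2$. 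Thus $\phi$ intertwines the deck action on $\widetilde{\Delta}$ with the diagonal action on $C(T_1,T_2)$.

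Finally, since both actions are free and properly discontinuous (the deck action is, and its conjugate under $\phi$ must be too), the equivariant isomorphism $\phi$ descends to an isomorphism
\[
\bar{\phi} : \widetilde{\Delta}/F_g \longrightarrow C(T_1,T_2)/F_g,
\]
which is precisely the claimed isomorphism $\Delta(M_g, \Sigma_1, \Sigma_2) \cong \Delta(T_1, T_2)$. I expect no real obstacle here: the only thing requiring a moment's thought is checking the equivariance of the projections $p_i$, and this is immediate from the fact that $T_i$ is built out of $F_g$-orbits of pieces of $\widetilde{\Sigma_i}$ and that $\widetilde{\Delta}$ is built equivariantly from pieces of $(\widetilde{M_g}, \widetilde{\Sigma_1}, \widetilde{\Sigma_2})$.
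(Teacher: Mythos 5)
Your proposal is correct and follows the paper's route exactly: the paper derives the corollary from Proposition \ref{equalityconstructions} by noting (in the sentence immediately preceding the statement) that the $F_g$-action on $\widetilde{\Delta}$ induced from $\widetilde{M_g}$ coincides with the diagonal action on $T_1 \times T_2$, so the isomorphism descends to the quotients. You simply spell out the equivariance check that the paper leaves implicit.
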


As a consequence of Corollary \ref{cor equality constructions}, the complex $\Delta(M_g, \Sigma_1,\Sigma_2)$ satisfies properties (2)-(3), (1')-(5') above.
\vskip 0.2cm

In the next section we will prove a sort of converse to Theorem \ref{equalityconstructions}, i.e. the core of two trivalent trees both endowed with actions by the free group $F_g$ can always be realised as the square complex dual to two maximal sphere systems embedded in the manifold $\widetilde{M_g}$.

\section{The inverse construction}
\label{Inverse construction}
In Section \ref{Dual square complex}, given a manifold $M_g$ with two embedded sphere systems in standard form, we have decomposed the manifold as a union of simple pieces, we have described a dual square complex and proved it satisfies properties (2), (3), (1')-(5') in Section \ref{The core of two trees}.
In this section we are going to describe an inverse procedure. Namely, starting with a square complex satisfying the above properties, we will associate to each cell of the complex a \virgolap piece\virgolch, together with some \virgolap gluing rules\virgolch. We will show then that this provides a piece decomposition for a manifold $M_g$ and two embedded sphere systems in standard form.
\vskip  0.3cm
Let $\Delta$ be a square complex satisfying properties (2), (3), (1')-(5') in Section \ref{The core of two trees}. We associate to each cell of ${\Delta}$ a piece in the following way.
\vskip 0.3cm

\vskip 0.1cm
\par Given a square $s$ in ${\Delta}$ we associate to $s$ a circle, $c(s)$. We call these circles \emph{1-pieces}.
\vskip 0.3cm
\par As for edges, we will refer to horizontal edges as \emph{black edges} and to vertical edges as \emph{red edges}.  We associate to each edge $e$ of ${\Delta}$ a planar surface $p(e)$ having as many boundary components as the number of squares adjacent to $e$ (which is at most 3 by property (5')), as shown in Figure \ref{TABLE 2}. We colour these surfaces with black or red, according to the colour of the edge they are associated to, and we call these surfaces \emph{2-pieces}.

\begin{figure}
 \begin{center}
\includegraphics[scale=0.15]{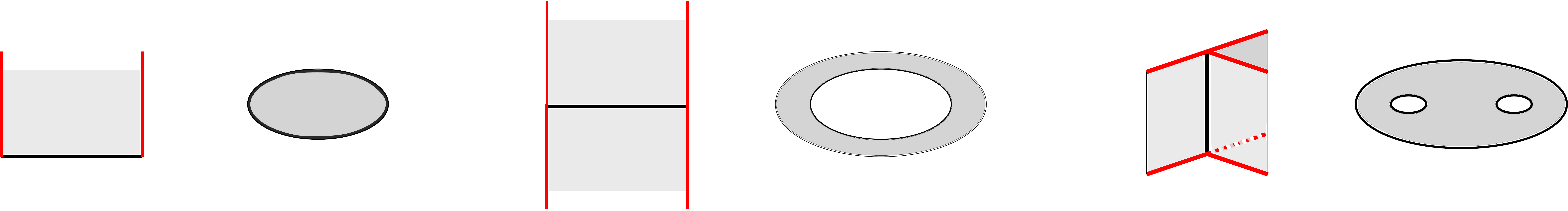}
\caption{How to associate a 2-piece $p(e)$ to an edge $e$. The edge we consider is the black edge in the picture. The associated 2-piece is a disc if the edge bounds one square, an annulus if the edge bounds two squares, and a pair of pants if the edge bounds three squares.} \label{TABLE 2}
 \end{center}

\end{figure}

\vskip 0.3cm
\par We associate to each vertex $v$ a closed surface $S(V)$ and a handlebody $P (v)$ according to the link of $v$ in $\Delta$. The surface $S(v)$ is the union of the 2-pieces and the 1-pieces
associated to the edges and squares incident to $v$;
the vertex link determines how these pieces are glued together, as described in Figure \ref{TABLE 3}. Then $P(v)$ is the handlebody having $S(v)$ as its boundary and (if $S(v)$ has positive genus) such that  $1$-pieces in $S(v)$ are not all trivial in the fundamental group of $P(v)$.
We call the handlebody $P(v)$ a \emph{3-piece} and the surface $S(v)$ the \emph {boundary pattern} of $P(v)$.
Note that two 2-pieces of the same colour are never adjacent on the boundary of a 3-piece.
\vskip 0.1cm
\begin{figure}
\begin{center}
  \includegraphics[scale=0.25]{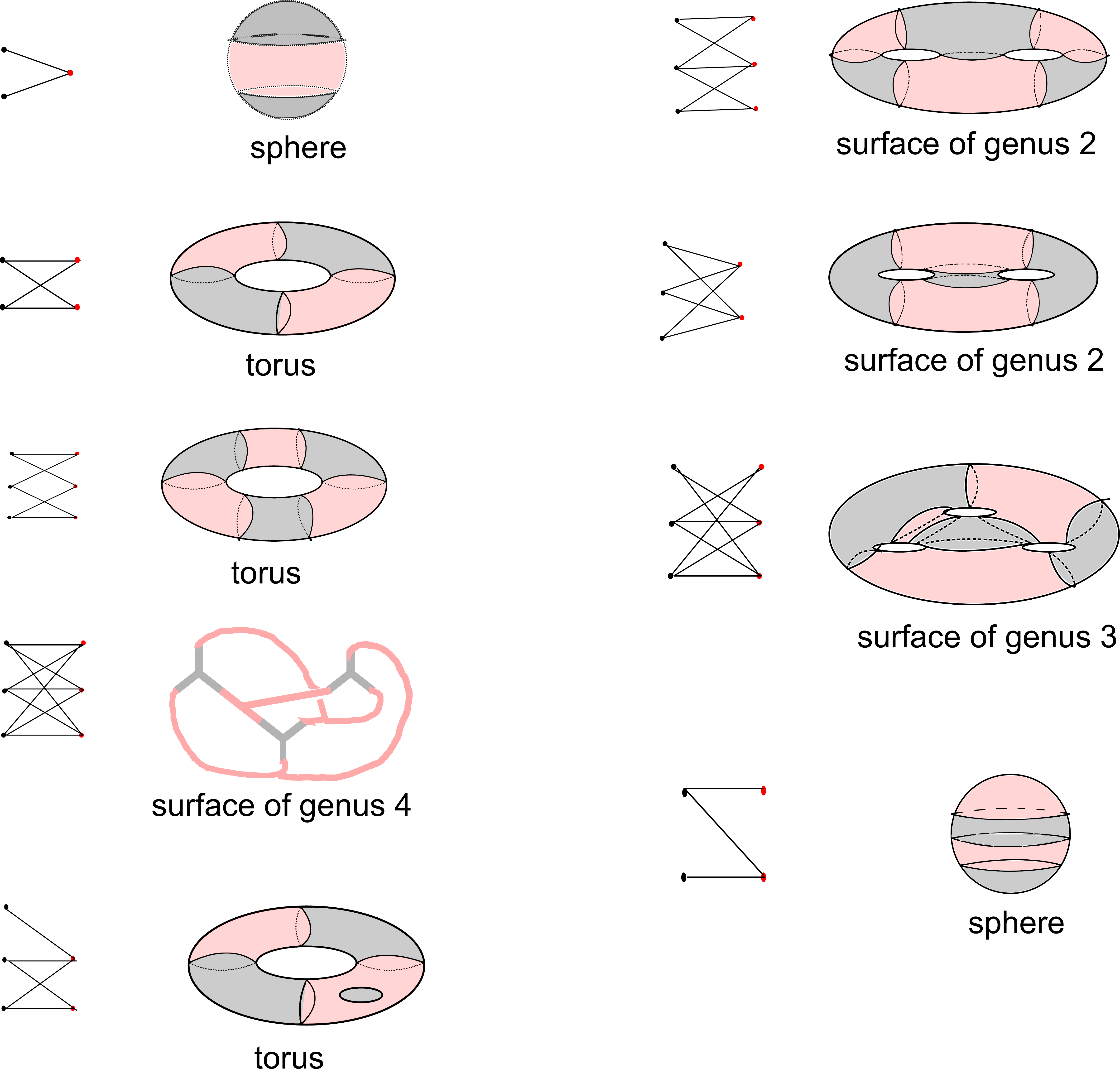}
\end{center}
\caption{How to deduce the boundary pattern $S(v)$ (on the right) given the link of the vertex $v$ (on the left):
if $G$ is the link of the vertex $v$ in $\Delta$, each vertex of valence $k$ in $G$ corresponds to a planar surface having $k$ boundary components (where $k$ is $1$, $2$ or $3$); two surfaces are glued along a circle if and only if the two corresponding vertices are joined by an edge in $G$.
The $3$-piece $P(v)$ is the exterior of the surfaces we draw (if $S(v)$ has positive genus we require that 1-pieces on the boundary are not all trivial in $\pi_1(P(v))$).
} \label{TABLE 3}
\end{figure}

\par With a little abuse we will use the word 2-piece (resp 3-piece) to denote both the open and the closed surface (resp. handlebody).

\vskip 0.5cm
We glue now these pieces together to form a 3-manifold, which we denote as $M_{{\Delta}}$.

The manifold $M_{{\Delta}}$ will be constructed inductively, first taking the union of the 1-pieces, then attaching the 2-pieces and eventually the 3-pieces.
The idea is that we attach an $n$-piece to an $(n-1)$-piece if the corresponding cells of the square complex are incident to each other. The procedure is described below.
\vskip 0.2cm
\par Let $N_s$ be an indexing set for the squares of the complex $\Delta$, $N_e$ be an indexing  set for the
edges of $\Delta$, and $N_v$ be an indexing set for the vertices of $\Delta$.
\vskip 0.1cm
\par Define $C_1$ as the disjoint union of the circles $c(s_i)$ for all $i$ in $N_s$. We call $C_1$ the \emph{1-skeleton} of $M_{\Delta}$.
\vskip 0.1cm
\par Then attach the 2-pieces to the 1-skeleton. Note that, by construction, if $e$ is an edge contained in the square $s$, then exactly one of the boundary components of the 2-piece $p(e)$ will correspond to the square $s$. We attach this boundary component to the 1-piece $c(s)$; the attaching map is meant to be a self-homeomorphism of the circle.
\par \noindent Denote by $C_2$ the set $C_1 \bigsqcup_{i \in N_e} p(e_i)$ quotiented by the attaching maps and endowed with the quotient topology. We choose the attaching maps in such a way that $C_2$ is orientable. Note that such a choice is possible and is unique up to isotopy. We call $C_2$ the \emph{2-skeleton} of $M_{\Delta}$.
By construction, $C_2$ satisfies the following properties:
\par - Each boundary component of a 2-piece is attached to exactly one 1-piece, and two different boundary components of the same 2-piece are attached to two different 1-pieces.
\par - For each 1-piece $c = c(s)$, exactly four different 2-pieces, two black ones and two red ones, are glued to $c$, these are exactly the 2-pieces corresponding to the four edges of the square $s$.
\vskip 0.1cm

Finally, we glue the 3-pieces to the 2-skeleton $C_2$ to form $M_{\Delta}$.
\par Namely, given a vertex $v$ in $\Delta$, consider the subset of $C_2$ composed of 1-pieces and 2-pieces which correspond to edges and squares in the star of $v$.  Note that this is a connected closed orientable surface, and coincides to the boundary pattern of the 3-piece $P(v)$. Therefore we can glue the 3-piece $P(v)$ to its boundary pattern in $C_2$; recall that, if $P(v)$ is not a ball, we glue it in such a way that the 1-pieces in the boundary pattern are not all trivial in $\pi_1(P(v))$.
Here the gluing map is defined only up to Dehn twists around curves on the boundary pattern which are homotopic to 1-pieces (i.e. essential curves in the 2-pieces). For the moment we choose the gluing maps and carry on with the construction. We will observe below (Remark \ref{Dehn twist ok}) that a different choice for the gluing maps would in the end give us a homeomorphic 3-manifold, and therefore our choice is not relevant.

\par Denote by $M_{\Delta}$ the union of $C_2$ and the 3-pieces, quotiented by the attaching maps. Denote by ${Q_B}$ the union of 1-pieces and black 2-pieces, and by ${Q_R}$ the union of 1-pieces and red 2-pieces.
Note that each 2-piece lies on the boundary of exactly two 3-pieces and each 1-piece lies on the boundary of exactly four 3-pieces.
\vskip 0.2cm
In the same way, if $\widetilde{\Delta}$ denotes the universal cover of such a square complex $\Delta$ (equivalently, $\widetilde{\Delta}$ satisfies properties (1)-(5) in Section \ref{The core of two trees}).
we can construct a manifold $M_{\widetilde{\Delta}}$.Denote by $\widetilde{Q_B}$ the union of 1-pieces and black 2-pieces in $M_{\widetilde{\Delta}}$, and by $\widetilde{Q_R}$ the union of 1-pieces and red 2-pieces in $M_{\widetilde{\Delta}}$.

Note that the action of the group $F_g$ on $\widetilde{\Delta}$ induces a free properly discontinuous cocompact action of $F_g$ on $M_{\widetilde{\Delta}}$. We deduce that $M_{\widetilde{\Delta}}$ is a covering space for $M_\Delta$, and $F_g$ is the deck transformation group for the covering map; $\widetilde{Q_R}$ and $\widetilde{Q_B}$ are the full lifts of $Q_R$ and $Q_B$.
We will prove (Lemma \ref{proof simp conn}), that
$M_{\widetilde{\Delta}}$ is in fact the universal cover of $M_{\Delta}$.
\vskip 0.2cm
\par The rest of this section is aimed at proving the following:

\begin{theorem} \label{main claim}
The space $M_\Delta$ is the connected sum of $g$ copies of $S^2 \times S^1$. $Q_R$ and $Q_B$ are two embedded maximal sphere systems in standard form with respect to each other.
\end{theorem}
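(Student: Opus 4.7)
My plan is to establish the theorem in four steps, passing to the universal cover $\widetilde{M_\Delta}$ built from $\widetilde\Delta$ whenever this simplifies the bookkeeping.

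First, I would check that $M_\Delta$ is an orientable closed $3$-manifold and that $\widetilde{Q_B}$, $\widetilde{Q_R}$ are disjoint unions of embedded $2$-spheres. The manifold property follows from the local gluing rules: exactly two $3$-pieces meet along each $2$-piece, and around each $1$-piece $c(s)$ the four adjacent $2$-pieces (from the four edges of $s$) together with the four incident $3$-pieces fit as dictated by the link of a point of $c(s)$, namely four bigons glued along four arcs forming an $S^2$. For the sphere claim, each component of $\widetilde{Q_B}$ is obtained from a hyperplane $H$ of $\widetilde\Delta$ by taking the $2$-pieces $p(e)$ for $e$ a vertex of $H$, glued along the $1$-pieces $c(s)$ for $s$ an edge of $H$, in the tree pattern of $H$. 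Property (2) says $H$ is a finite tree, and a vertex of $H$ of valence $d$ corresponds to a $2$-piece with $\chi = 2-d$, so the total Euler characteristic is $\sum_i (2 - d_i) = 2V - 2(V-1) = 2$, hence each component is a $2$-sphere (connected by connectedness of $H$, orientable by the choice of attaching maps for $C_2$). Distinct hyperplanes give distinct, disjoint spheres, and projecting to $M_\Delta$ yields the sphere system $Q_B$; $Q_R$ is analogous.

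Second, I would establish minimal form. Properties (3) and (4) make $\widetilde{\Delta}$ a simply connected locally CAT$(0)$ square complex, hence CAT$(0)$ by Cartan--Hadamard; so two hyperplanes of $\widetilde{\Delta}$ meet in at most one midsquare, which means each sphere of $\widetilde{Q_B}$ meets each sphere of $\widetilde{Q_R}$ in at most one circle. By Lemma \ref{sphereintersection} this is the minimum, so $(Q_B, Q_R)$ is in minimal form. The complementary components of $Q_B \cup Q_R$ in $M_\Delta$ are by construction the interiors of the $3$-pieces $P(v)$, which are handlebodies, so standard form will follow once maximality is known.

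Third, for maximality I would invoke property (5'): collapsing vertical lines in $\widetilde{\Delta}$ yields a trivalent tree, which is the dual tree to $\widetilde{Q_B}$ in $\widetilde{M_\Delta}$, so each complementary component of $\widetilde{Q_B}$ has three boundary spheres. The main technical step is to prove each such component is a $3$-holed $3$-sphere, namely a $3$-ball minus three open balls. This is where the requirement that $1$-pieces are not all trivial in $\pi_1(P(v))$ and the explicit classification of vertex links (Figures \ref{possiblelinks} and \ref{TABLE 3}) become essential: a vertical line in $\widetilde{\Delta}$ corresponds to a chain of handlebodies glued along red $2$-pieces, and the non-triviality condition should ensure that every handle of a $3$-piece $P(v)$ is filled (up to isotopy) by the gluings along the vertical edges incident to $v$. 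I expect the case-by-case verification over the nine link types to be the principal obstacle.

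Finally, to identify $M_\Delta$ with $M_g$, the previous step yields a decomposition of $\widetilde{M_\Delta}$ into simply connected $3$-holed $3$-spheres glued pairwise along the spheres of $\widetilde{Q_B}$, with dual graph the tree $T_1$. Iterated Van Kampen then gives $\pi_1(\widetilde{M_\Delta}) = 1$, so $\widetilde{M_\Delta}$ is the universal cover of $M_\Delta$, and property (4') yields $\pi_1(M_\Delta) \cong F_g$. Being a closed orientable $3$-manifold with free fundamental group of rank $g$ admitting a maximal sphere system whose complementary components are $3$-holed $3$-spheres, $M_\Delta$ must be homeomorphic to $M_g = \#_g (S^2 \times S^1)$, either by assembling an explicit homeomorphism piece by piece or by invoking the prime decomposition of closed orientable $3$-manifolds together with the fact that $S^2 \times S^1$ is the only prime summand with free fundamental group.
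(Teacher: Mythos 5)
Your overall skeleton is reasonable and several steps are sound: the Euler characteristic count $\sum_i(2-d_i)=2V-2(V-1)=2$ is a perfectly good alternative to the paper's argument for Lemma \ref{lemma2} (which instead views each component of $\widetilde{Q_B}$ as the boundary of a tubular neighbourhood of a finite tree), and your endgame via the prime decomposition matches the paper's appeal to Perelman and Hempel. The genuine gap is your third step. The claim that each complementary component of $\widetilde{Q_B}$ is a $3$-holed $3$-sphere is exactly the technical heart of the theorem, and you do not prove it: you name it as ``the principal obstacle'' and propose a local, case-by-case check over the nine link types in which ``every handle of a $3$-piece $P(v)$ is filled by the gluings along the vertical edges incident to $v$.'' That mechanism cannot work locally: the disc that kills a $1$-piece $c(s)$ in $\pi_1$ is a hemisphere of the sphere of $\widetilde{Q_B}$ or $\widetilde{Q_R}$ containing it, assembled from $2$-pieces spread over an entire hyperplane, i.e.\ over many $3$-pieces far from the star of $v$. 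The paper's mechanism is global: it embeds a copy $G$ of the binary subdivision of $\widetilde{\Delta}$ into $M_{\widetilde{\Delta}}$, shows $\pi_1$ of each $3$-piece is carried by $1$-pieces (Lemma \ref{lemma3}) and hence dies under inclusion (Lemma \ref{lemma4}), homotopes any loop in a component $C$ into $G\cap C$, kills the generating $4$-circuits inside single $3$-pieces, checks compactness via finiteness of hyperplanes and the count of boundary spheres via property (5), and only then invokes Perelman to convert ``compact, simply connected, three sphere boundary components'' into ``$3$-holed $3$-sphere.'' Without this, neither maximality nor your fourth step (which consumes the $3$-holed-$3$-sphere decomposition to get $\pi_1(M_{\widetilde{\Delta}})=1$ by Van Kampen) can be completed. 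Note also that the paper runs these two steps in the opposite order: simple connectedness of $M_{\widetilde{\Delta}}$ is proved first (Lemma \ref{proof simp conn}) and the same graph argument is then relativised to the components (Lemma \ref{comp 3-holed spheres}).

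A smaller gap sits in your second step: ``each sphere of $\widetilde{Q_B}$ meets each sphere of $\widetilde{Q_R}$ in at most one circle'' does not by itself give minimal form. Lemma \ref{sphereintersection} says that minimally intersecting spheres meet at most once, not the converse; a single intersection circle could still be removable if the induced end partitions are nested. The paper closes this with the further observation that, by construction, no $3$-piece is bounded by two discs.
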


\par The proof of Theorem \ref{main claim} consists of several steps. First we prove:

\begin{lemma} \label{lemma1}
$M_\Delta$ is a closed topological 3-manifold.
\end{lemma}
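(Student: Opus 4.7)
The plan is to verify separately that $M_\Delta$ is compact without boundary and that every point admits a Euclidean neighbourhood, the second step by stratifying $M_\Delta$ according to which piece contains the point. Compactness is immediate because $\Delta$ has finitely many cells (by property (1')) and each $1$-, $2$- and $3$-piece is compact, so $M_\Delta$ is a finite quotient of a compact space. Boundarylessness follows from the two gluing rules listed just after the construction of $C_2$: every $2$-piece is glued to exactly two $3$-pieces and every $1$-piece is adjacent to exactly four $2$-pieces which are themselves glued to $3$-pieces on both sides, so no face of any piece is left exposed.

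For the local Euclidean structure, a point in the interior of a $3$-piece is automatic since a $3$-piece is a handlebody, hence a manifold with boundary. A point in the interior of a $2$-piece $p(e)$ lies on the boundary of precisely two $3$-pieces (corresponding to the two endpoints of $e$ in $\Delta$), and a bicollar of $p(e)$ inside the union of those two $3$-pieces provides an open $3$-ball neighbourhood.

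The delicate case, which I expect to be the main obstacle, is a point $p$ lying on a $1$-piece $c = c(s)$. By construction $c$ is attached to exactly four $2$-pieces, corresponding to the four edges of the square $s$; since $\Delta$ is V-H these appear in the cyclic order black, red, black, red as one runs around $c$ transversally. The four $2$-pieces partition a tubular neighbourhood of $c$ in the $2$-skeleton $C_2$ into four wedges, each of which sits as a \virgolap corner\virgolch of one of the (at most) four $3$-pieces $P(v)$ coming from the four vertices of $s$. The local model at each such corner of $P(v)$ can be read off from its boundary pattern (Figure \ref{TABLE 3}): one black and one red $2$-piece meet along $c$ and bound a solid wedge of the handlebody. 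Fitting the four wedges cyclically around $c$, a transverse slice through $p$ becomes a $2$-disc cut into four sectors by four radial arcs, and taking the product of this slice with a small arc along $c$ produces the required $3$-ball neighbourhood of $p$. The orientability of $C_2$ ensures that the cyclic arrangement of the four wedges is actually realised, so the local picture at every point of a $1$-piece is indeed a $3$-ball, completing the verification.
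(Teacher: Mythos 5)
Your proposal is correct and follows essentially the same route as the paper: a case-by-case local analysis according to whether the point lies in the interior of a 3-piece, in the interior of a 2-piece, or on a 1-piece, with compactness coming from the finiteness of $\Delta$. The only difference is that you spell out the 1-piece case (the four alternating wedges around a transverse slice) in more detail than the paper, which simply asserts that suitable neighbourhoods in the four adjacent 3-pieces can be glued; your elaboration is consistent with the construction and with the V-H structure.
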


\begin{proof}
We claim that each point $q$ in $M_\Delta$ has a neighborhood homeomorphic to $\mathbb{R}^3$, to prove it, we analyse separate cases.

\par The claim is clearly true if the point $q$ belongs to the interior of a 3-piece.
\par  Suppose the point $q$ belongs to the interior of a 2-piece $p$. Now, $p$ lies on the boundary of exactly two 3-pieces: $P_1$ and $P_2$, which are glued along $p$. Therefore there exist a neighborhood $U_1$ of $q$ in $P_1$, homeomorphic to ${\mathbb{R}^3}_-$, and a neighborhood $U_2$ of $q$ in $P_2$, homeomorphic to ${\mathbb{R}^3}_+$, so that $U_1$ and $U_2$ are glued together along their common boundary; their union provides a neighbourhood of $q$ in $M_\Delta$ homeomorphic to $\mathbb{R}^3$.

\par  Finally, suppose that $q$ is contained in a 1-piece $c$. The piece $c$ lies on the boundary of exactly four $2$-pieces and exactly four $3$-pieces.  Again, by choosing suitable neighborhoods of $q$ in
the four $3$-pieces it belongs to, and gluing them together,
we can find a neighborhood of $q$ in $M_{\Delta}$ homeomorphic to $\mathbb{R}^3$.

\par This proves that $M_\Delta$ is a 3-manifold without boundary. Now, $M_\Delta$ is compact because it is a finite union of 3-pieces.
\end{proof}

In the same way we can show that $M_{\widetilde{\Delta}}$ is a topological 3-manifold, though not necessarily compact.
As a next step we prove the following:

\begin{lemma} \label{lemma2}
Each connected component of $Q_B$ or $Q_R$ is an embedded sphere in $M_\Delta$.
\end{lemma}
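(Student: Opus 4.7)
The plan is to show that each component $X$ of $Q_B$ (the case of $Q_R$ being symmetric) is a closed orientable surface whose combinatorial structure is encoded by a hyperplane of $\Delta$; since that hyperplane is a finite tree by property (2), an Euler characteristic computation will force $X \cong S^2$.

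First I would verify that $X$ is a closed $2$-manifold. Interior points of a black $2$-piece are trivially surface points. At a point of a $1$-piece $c(s)$, the number of black $2$-pieces of $Q_B$ meeting there is exactly two: those corresponding to the two opposite horizontal edges of the square $s$. Gluing two half-discs along their common diameter yields a disc neighbourhood, so $X$ is a $2$-manifold without boundary, and compactness follows from finiteness of $\Delta$. Orientability of each component is inherited from the orientability of $C_2$ built into the construction.

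Next I would identify $X$ with a hyperplane of $\Delta$. The equivalence relation generated by ``opposite edges of the same square'' preserves the horizontal/vertical colouring, so each equivalence class $[e]$ of horizontal edges is dual to a hyperplane $H_{[e]}$, which is a finite tree by property (2). The component of $Q_B$ containing $p(e)$ is precisely the union of the $2$-pieces $p(f)$ for $f \in [e]$, glued along the $1$-pieces $c(s)$ for squares $s$ whose two horizontal edges both lie in $[e]$. Under this correspondence, vertices of $H_{[e]}$ match black $2$-pieces of $X$ and edges of $H_{[e]}$ match $1$-pieces of $X$; in particular the valence of $f$ in $H_{[e]}$ (at most $3$ by property (5')) equals the number of boundary components of $p(f)$.

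Finally I would compute $\chi(X)$ by inclusion--exclusion. Since a planar surface with $k$ boundary components has Euler characteristic $2-k$ and circles contribute $0$, if $H_{[e]}$ has $V$ vertices and $E$ edges then
\[
\chi(X) \;=\; \sum_{f \in [e]} \bigl(2 - \mathrm{val}(f)\bigr) \;=\; 2V - 2E \;=\; 2,
\]
using the tree identity $V-E=1$. A closed connected orientable surface of Euler characteristic $2$ is a sphere, and $X$ is embedded in $M_\Delta$ by construction. The only real subtlety lies in the local gluing combinatorics making $Q_B$ a $2$-manifold (that exactly two black $2$-pieces meet along each $1$-piece); once this is in hand, the sphericity of each component reduces to the above essentially routine Euler characteristic calculation.
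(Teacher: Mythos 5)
Your proposal is correct, and it follows the paper's proof in its main structural step: both arguments identify the components of $Q_B$ (resp.\ $Q_R$) with the hyperplanes dual to black (resp.\ red) edges, observe that two $2$-pieces of the same colour are glued along a $1$-piece exactly when the corresponding edges are opposite edges of a square, and then invoke property (2) that hyperplanes are finite trees. Where you diverge is in the concluding step. The paper finishes geometrically: it realises the component as the boundary of a tubular neighbourhood of the finite tree $H$ embedded in $\mathbb{R}^3$, which is a sphere. You instead finish combinatorially: having checked that the component is a closed surface (exactly two black $2$-pieces meet along each $1$-piece, compactness from finiteness of $\Delta$), you compute $\chi(X)=\sum_f (2-\mathrm{val}(f))=2V-2E=2$ from the tree identity $V-E=1$ and appeal to the classification of closed surfaces (orientability is not even needed here, since $\chi=2$ already forces $S^2$ among all closed surfaces). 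Each version buys something: the paper's tubular-neighbourhood picture is immediate and also makes visible \emph{how} the sphere sits in $M_\Delta$, but it tacitly requires identifying the glued-up union of $2$-pieces with the boundary of a regular neighbourhood of the tree; your Euler-characteristic computation avoids that identification entirely at the cost of verifying the local surface structure along the $1$-pieces, which you correctly isolate as the one genuine point to check. Both are complete; your version is arguably the more self-contained of the two.
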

\begin{proof}

By construction, each 1-piece is an embedded circle in $M_\Delta$ and each 2-piece is an embedded surface; two different 2-pieces are either disjoint or they are glued together along a 1-piece, and each 1-piece bounds exactly two red 2-pieces and two black 2-pieces. Consequently, $Q_B$ and $Q_R$ are embedded surfaces in $M_\Delta$ without boundary, possibly disconnected.
\par Note now that two 2-pieces of the same colour are glued together along a 1-piece if and only if the edges they correspond to are the two horizontal (or vertical) edges of the same square, i.e. there is a bijective correspondence between the hyperplanes perpendicular to black (resp. red) edges and the connected components of $Q_B$ (resp. $Q_R$).

There is indeed  a systematic way to recover a components of $Q_B$ or $Q_R$ from the hyperplane it corresponds to. Namely, if we consider a hyperplane $H$ as a graph embedded in $\mathbb {R} ^3$ then the corresponding surface will be the boundary of a tubular neighborhood of $H$, which is a sphere, since by property 2) $H$ is a finite tree.
\end{proof}

Note that Lemma \ref{lemma2} holds also for $\widetilde{Q_R}$ and $\widetilde{Q_B}$ in $M_{\widetilde{\Delta}}$.

\begin{remark}\label{Dehn twist ok}
As promised, we observe now that, if we had chosen different gluing maps for the 3-pieces, the manifold we obtained would be homeomorphic to $M_{\Delta}$, i.e. $M_{\Delta}$ is well defined. In fact, choosing a different attaching map for a 3-piece would be the same as performing a Dehn surgery of kind $(1,n)$ (longitude preserving) on a tubular neighborhood of a 1-piece.
Lemma \ref{lemma2} implies that a 1-piece bounds an embedded disc in $M_{\Delta}$; therefore such a Dehn surgery does not modify the homeomorphism class of $M_{\Delta}$. The same is true for $M_{\widetilde{\Delta}}$.
\end{remark}

As a next step towards the proof of Theorem \ref{main claim}, we claim that the fundamental group of $M_{\Delta}$ is the free group $F_g$ of rank $g$.
\par We have already observed that the manifold  $M_ {\widetilde{\Delta}}$ is a covering space for $M_\Delta$ and the deck transformation group is the free group $F_g$. We will prove that $M_{\widetilde{\Delta}}$ is simply connected; which implies that
$M_ {\widetilde{\Delta}}$ is the universal cover of $M_\Delta$, hence the claim holds.

The proof of simple connectedness of
$M_ {\widetilde{\Delta}}$ relies on some preliminary lemmas.

\begin{lemma} \label{lemma3}
For each 3-piece $P$ in $M_\Delta$ or $M_{\widetilde{\Delta}}$, its fundamental group $\pi_1(P)$ is supported on the 1-piece components of its boundary pattern;  i. e. there exists a basis for $\pi_1(P)$ so that each generator $\gamma$ is homotopic to a 1-piece in the boundary pattern.

\end{lemma}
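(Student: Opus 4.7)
My plan is a case analysis on the nine possible vertex links (equivalently, boundary patterns) shown in Figure \ref{TABLE 3}, preceded by a uniform structural observation that reduces the content of the lemma to the second step.

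First, I would establish that $\pi_1(P(v))$ is generated by the homotopy classes of the 1-pieces in the boundary pattern $S(v)$. Each 2-piece is a planar surface (a disc, annulus, or pair of pants, by the construction recalled in Figure \ref{TABLE 2}), so its fundamental group is generated by its boundary circles, which are precisely 1-pieces. Iterated application of Van Kampen's theorem to the decomposition of $S(v)$ into 2-pieces glued along 1-pieces then shows that $\pi_1(S(v))$ is generated by 1-pieces. Since $P(v)$ is a handlebody with boundary $S(v)$, the inclusion $S(v) \hookrightarrow P(v)$ is surjective on fundamental groups (a standard fact for handlebodies, with kernel normally generated by meridian curves), and the 1-pieces therefore generate $\pi_1(P(v))$ as well.

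Second, for each of the nine cases in Figure \ref{TABLE 3}, I would carry out an explicit verification. When $P(v)$ is a ball, $\pi_1(P(v))$ is trivial and the statement is vacuous. When $S(v)$ has positive genus, the defining condition of $P(v)$ (that the 1-pieces are not all null-homotopic in $\pi_1(P(v))$) combined with the explicit description of $S(v)$ in each case would allow me to pick out a subcollection of 1-pieces of size equal to the genus of $P(v)$ whose homotopy classes form a basis of the free group $\pi_1(P(v))$. Concretely, for each case I would exhibit a system of meridian discs for $P(v)$ whose boundaries are arranged compatibly with the decomposition of $S(v)$ into 2-pieces, so that cutting along them reduces $P(v)$ to a ball while leaving a set of 1-pieces that project to a free basis of $\pi_1(P(v))$.

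The main obstacle will be the second step. A generic generating set of a free group need not contain a basis, so the uniform structural argument of the first step does not alone imply the lemma. What would make each case tractable is the combinatorial rigidity of the nine vertex links together with the non-triviality stipulation in the construction of $P(v)$: the configuration of 1-pieces on $\partial P(v)$ is essentially determined by the vertex link (up to the Dehn twist ambiguity discussed in Remark \ref{Dehn twist ok}), and the required basis can be read off by direct inspection. The fact that the genus of $P(v)$ is at most $4$ ensures that the case analysis remains finite and manageable.
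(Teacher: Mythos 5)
Your operative step coincides with the paper's entire proof: the paper disposes of this lemma with the single sentence ``The proof is a case by case inspection,'' and your second step is precisely that inspection of the nine boundary patterns of Figure \ref{TABLE 3}. You also correctly isolate the real content of the statement, namely that a \emph{basis}, not merely a generating set, must be realised by 1-pieces.

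However, your first step contains a genuine error. It is not true that $\pi_1(S(v))$ is generated by the 1-pieces. The 1-pieces are pairwise disjoint simple closed curves on the closed orientable surface $S(v)$, so their homology classes span an isotropic subspace of $H_1(S(v);\mathbb{Z})$ with respect to the intersection form, hence a subgroup of rank at most $\genus(S(v))$, while $H_1(S(v))$ has rank $2\genus(S(v))$. (Equivalently: iterating Van Kampen over a decomposition whose dual graph --- here the vertex link --- contains cycles introduces stable letters not carried by the edge circles; in the simplest positive-genus pattern, a torus decomposed into annuli, the 1-pieces are all parallel and generate only a $\mathbb{Z}$ inside $\mathbb{Z}^2$.) Consequently, surjectivity of $\pi_1(S(v)) \to \pi_1(P(v))$ does not by itself show that the 1-pieces generate $\pi_1(P(v))$: their images span a rank $\leq \genus(S(v))$ subgroup of $H_1(S(v))$, and whether this subgroup maps onto $H_1(P(v)) \cong \mathbb{Z}^{\genus(S(v))}$ depends on which curves bound meridian discs, i.e.\ on the particular handlebody structure. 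So there is no uniform reduction; even the weaker assertion that the 1-pieces generate $\pi_1(P(v))$ must be checked case by case, using the explicit embeddings of Figure \ref{TABLE 3} together with the stipulation that the 1-pieces are not all trivial in $\pi_1(P(v))$ (and the Dehn twist ambiguity of Remark \ref{Dehn twist ok}, which you rightly note is harmless). Since you already place the burden of exhibiting a basis on the case analysis, this mistake does not derail the overall plan, but the first step should be deleted or demoted to a heuristic rather than presented as a proved reduction.
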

\begin{proof}
The proof is a case by case inspection
\end{proof}
\begin{lemma} \label{lemma4}
For each 3-piece $P$ in $M_\Delta$ (resp. $M_{\widetilde{\Delta}}$), the inclusion $P \rightarrow M_ \Delta$ (resp. $P \rightarrow M_{\widetilde{\Delta}}$) induces the trivial map on the level of fundamental groups.
\end{lemma}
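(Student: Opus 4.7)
The plan is to combine Lemma \ref{lemma3} with Lemma \ref{lemma2}. By Lemma \ref{lemma3}, $\pi_1(P)$ admits a generating set in which every generator is represented by a loop freely homotopic (inside $P$) to some $1$-piece $c$ lying in the boundary pattern of $P$. Hence, to show that the map $\pi_1(P) \to \pi_1(M_\Delta)$ (respectively $\pi_1(M_{\widetilde{\Delta}})$) is trivial, it is enough to verify that every $1$-piece $c$ is null-homotopic in the ambient manifold.

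To do this, I would use the fact, provided by Lemma \ref{lemma2}, that the component $S$ of $Q_B$ (or $Q_R$) containing $c$ is an embedded $2$-sphere in the manifold. The curve $c$ is a simple closed curve on $S$, and hence, by the Jordan curve theorem on $S^2$, separates $S$ into two discs. Either of these discs gives a null-homotopy of $c$ in the ambient manifold, establishing that $[c] = 1$ in $\pi_1(M_\Delta)$ (resp.\ $\pi_1(M_{\widetilde{\Delta}})$).

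Combining these two steps, each generator of $\pi_1(P)$ lies in the kernel of the inclusion-induced map, and hence the whole map is trivial, as required. The argument is identical for $M_\Delta$ and $M_{\widetilde{\Delta}}$ since both Lemma \ref{lemma2} and Lemma \ref{lemma3} are stated in both settings.

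There is no serious obstacle here beyond invoking the two preceding lemmas correctly; the only point worth double-checking is that the discs bounding the $1$-pieces really sit inside the $2$-skeleton $Q_B \cup Q_R$ of $M_\Delta$ (so that they exist regardless of how the $3$-pieces were attached), which follows directly from the description of the sphere components of $Q_B$ and $Q_R$ as regular neighbourhood boundaries of finite tree hyperplanes in the proof of Lemma \ref{lemma2}.
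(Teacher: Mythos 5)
Your proposal is correct and follows essentially the same route as the paper: reduce via Lemma \ref{lemma3} to showing each $1$-piece is null-homotopic, then use Lemma \ref{lemma2} to observe that each $1$-piece lies on an embedded sphere component of $Q_B$ or $Q_R$ and hence bounds a disc there. Your spelling out of the disc via the Jordan curve theorem just makes explicit what the paper's one-line proof leaves implicit.
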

\begin{proof}
By lemma \ref{lemma3} each element of $\pi_1(P)$ can be represented as a product of loops each of which is homotopic to a 1-piece.
By lemma \ref{lemma2} each 1-piece lies on a sphere in $M_\Delta$ or $M_{\widetilde{\Delta}}$, therefore
is trivial in $\pi_1(M_\Delta)$ (or $\pi_1 (M_{\widetilde{\Delta}})$).
\end{proof}

The construction we describe next will be a very useful tool in the proof of simple connectedness of $M_{\widetilde{\Delta}}$ and for the proof of Lemma \ref{comp 3-holed spheres}. \label{construction graph G}
\par To fix terminology, we use the term \emph{binary subdivision} of a V-H square complex to define the union of the 1-skeleton and the hyperplanes.
We will show that a graph $G$ isomorphic to the binary subdivision of $\widetilde{\Delta}$ can be embedded into $M_{\widetilde{\Delta}}$,  and that each loop in the graph $G$ is trivial in the fundamental group of $M_{\widetilde{\Delta}}$.
\par To build this graph $G$ in $M_{\widetilde{\Delta}}$, we first build a subgraph $G'$, which is isomorphic to the 1-skeleton of $\widetilde{\Delta}$. Then we add the other vertices and edges.
\par To construct $G'$, take a point $q_P$ in the interior of each $3$-piece of $M_{\widetilde{\Delta}}$; join two points $q_{P_1}$ and $q_{P_2}$ by an edge $\alpha_p$ if the $3$-pieces $P_1$, $P_2$ the points belong to, both contain the 2-piece $p$ on their boundary. We require $\alpha_p$ to be an embedded arc and to intersect only one 2-piece (the 2-piece $p$) in exactly one point. We colour the edges in $G'$ with black or red, each edge inheriting the colour of the $2$-piece it intersects.
We use the term \emph{4-circuits}  to denote loops in $G'$ consisting of the concatenation of four edges. Note that $G'$ is, by construction, isomorphic as a coloured graph to the 1-skeleton of $\widetilde{\Delta}$, and that 4-circuits coincide with loops of minimal length.

To build $G$ from $G'$, we take a new vertex for each  4-circuit in $G'$, and we join it to the midpoints of the four edges composing the circuit.
Namely, consider a  4-circuit in the graph $G$. This circuit corresponds to four 2-pieces $p_1$, $p_2$, $p_3$ and $p_4$ in  $M_{\widetilde{\Delta}}$, all intersecting in a 1-piece $c$.
Denote by $\alpha_i$ the edge in $G'$ dual to the 2-piece $p_i$.
Take a point $q$ in the circle $c$, and for each $i =1...4$ take an arc $\beta_i$ entirely contained in the 2-piece $p_i$ and joining the point $q$ to the arc $\alpha_i$ (see Figure \ref{figure 4}). Call the edges in $G \setminus G'$ \emph{newedges} and colour each newedge with black or red, according to the colour of the 2-piece it lies on.
We will use the word \emph{bisectors} to denote the union of newedges belonging to the same component $\widetilde{Q_R}$ or $\widetilde{Q_B}$. Note that each bisector is an embedded tree in a component of $\widetilde{Q_R}$ or $\widetilde{Q_B}$ and that bisectors correspond exactly to hyperplanes in $\widetilde{\Delta}$.

By construction, each 4-circuit of $G$  (i. e. each loop in $G$ consisting of four edges) is entirely contained in a single 3-piece of $M_{\widetilde{\Delta}}$, and therefore,  by Lemma \ref{lemma4}, is trivial in $\pi_1(M_{\widetilde {\Delta}})$. Consequently, each loop in $G$ (and in particular each loop in $G'$)  is nullhomotopic in  $M_{\widetilde {\Delta}}$
We are now ready to prove the following:

\begin{lemma} \label{proof simp conn}
$M_ {\widetilde{\Delta}}$ is simply connected.
\end{lemma}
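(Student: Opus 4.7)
The plan is to exploit the auxiliary graph $G \subset M_{\widetilde{\Delta}}$ constructed just above. Specifically, I will establish two things: (a) every loop in $M_{\widetilde{\Delta}}$ is homotopic to a loop in $G$, and (b) every loop in $G$ is nullhomotopic in $M_{\widetilde{\Delta}}$. Connectedness of $M_{\widetilde{\Delta}}$ follows beforehand from connectedness of $\widetilde{\Delta}$ (property (1)) together with the fact that adjacent 3-pieces share a 2-piece, so the two steps above will suffice.

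For (a), I would pick a basepoint $q_P \in G$ and a loop $\gamma$ based at $q_P$. Put $\gamma$ in general position with respect to the 2-complex $\widetilde{Q_R} \cup \widetilde{Q_B}$, so $\gamma$ meets it in finitely many points, each in the interior of a 2-piece and away from every 1-piece. Since each 2-piece $p$ is path-connected, a homotopy supported in a neighbourhood of $p$ will move each intersection point to the midpoint of the edge $\alpha_p$ of $G'$ that pierces $p$. After this, $\gamma$ decomposes into arcs, each lying in a single 3-piece $P_i$ with endpoints at midpoints of edges of $G'$ on $\partial P_i$. Lemma \ref{lemma4} tells us that $\pi_1(P_i) \to \pi_1(M_{\widetilde{\Delta}})$ is trivial, so any two arcs in $P_i$ with the same endpoints are homotopic rel endpoints in $M_{\widetilde{\Delta}}$ (their concatenation is a loop in $P_i$, hence nullhomotopic in $M_{\widetilde{\Delta}}$). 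Replacing each arc by the corresponding path $m_i \to q_{P_i} \to m_{i+1}$ in $G$ will then turn $\gamma$ into a loop in $G$ homotopic to the original.

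For (b), I will use that $G$ is the binary subdivision of $\widetilde{\Delta}$: regarding $G$ together with the four quadrants of each square of $\widetilde{\Delta}$ as a CW complex, one obtains a space homeomorphic to $\widetilde{\Delta}$, which is simply connected by property (4). The boundary of each such quadrant is exactly a 4-circuit of $G$, so $\pi_1(G)$ is normally generated by the 4-circuits. We have already recorded in the construction of $G$ that every 4-circuit lies inside a single 3-piece and is therefore nullhomotopic in $M_{\widetilde{\Delta}}$ by Lemma \ref{lemma4}. Writing any loop in $G$ as a product of conjugates of 4-circuits then shows it is nullhomotopic in $M_{\widetilde{\Delta}}$.

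The main obstacle is carrying out step (a) cleanly: one must verify that transverse perturbation off the 1-pieces is legitimate (they are 1-dimensional in the 3-manifold, so this is routine), that the slides within the 2-pieces are realizable as homotopies in $M_{\widetilde{\Delta}}$, and that the rel-endpoint homotopies of arcs from Lemma \ref{lemma4} can be concatenated coherently to give a genuine homotopy of the whole loop. Step (b) is combinatorial and reduces immediately to the 4-circuit triviality already established, so it is not where the work lies.
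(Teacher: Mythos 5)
Your proposal is correct and follows essentially the same route as the paper: homotope an arbitrary loop so that it meets each 2-piece only at the point $p\cap G'$, use Lemma \ref{lemma4} to push each arc in a 3-piece into $G$, and conclude via the nullhomotopy of 4-circuits (which the paper records just before the lemma, with the same implicit appeal to simple connectedness of $\widetilde{\Delta}$ that you make explicit through the binary subdivision). Your step (b) merely spells out a justification the paper leaves to the reader, so there is nothing to change.
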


\begin{proof}
We have constructed above a graph $G$ embedded in $M_ {\widetilde{\Delta}}$, and we have shown that each circuit in $G$ is trivial in  $\pi_1(M_{\widetilde{\Delta}})$.
To prove the lemma, we show that each loop $l$ in $M_ {\widetilde{\Delta}}$ is homotopic to a loop in the graph $G$.

In fact, let $l$ be a loop in $M_ {\widetilde{\Delta}}$; up to homotopy, we can suppose that $l$ does not intersect any 1-piece, and intersects every 2-piece transversely.
We may as well suppose that $l$ intersects a 2-piece $p$, if at all, in the point $p \cap G'$ (which is a vertex of $G$ by construction). Now, by Lemma \ref{lemma4}, if $P$ is any 3-piece in $M_ {\widetilde{\Delta}}$, then  $l \cap P$ can be homotoped into $G \cap P$.
\end{proof}

\begin{figure}
 \begin{center}
  \includegraphics[scale=0.25]{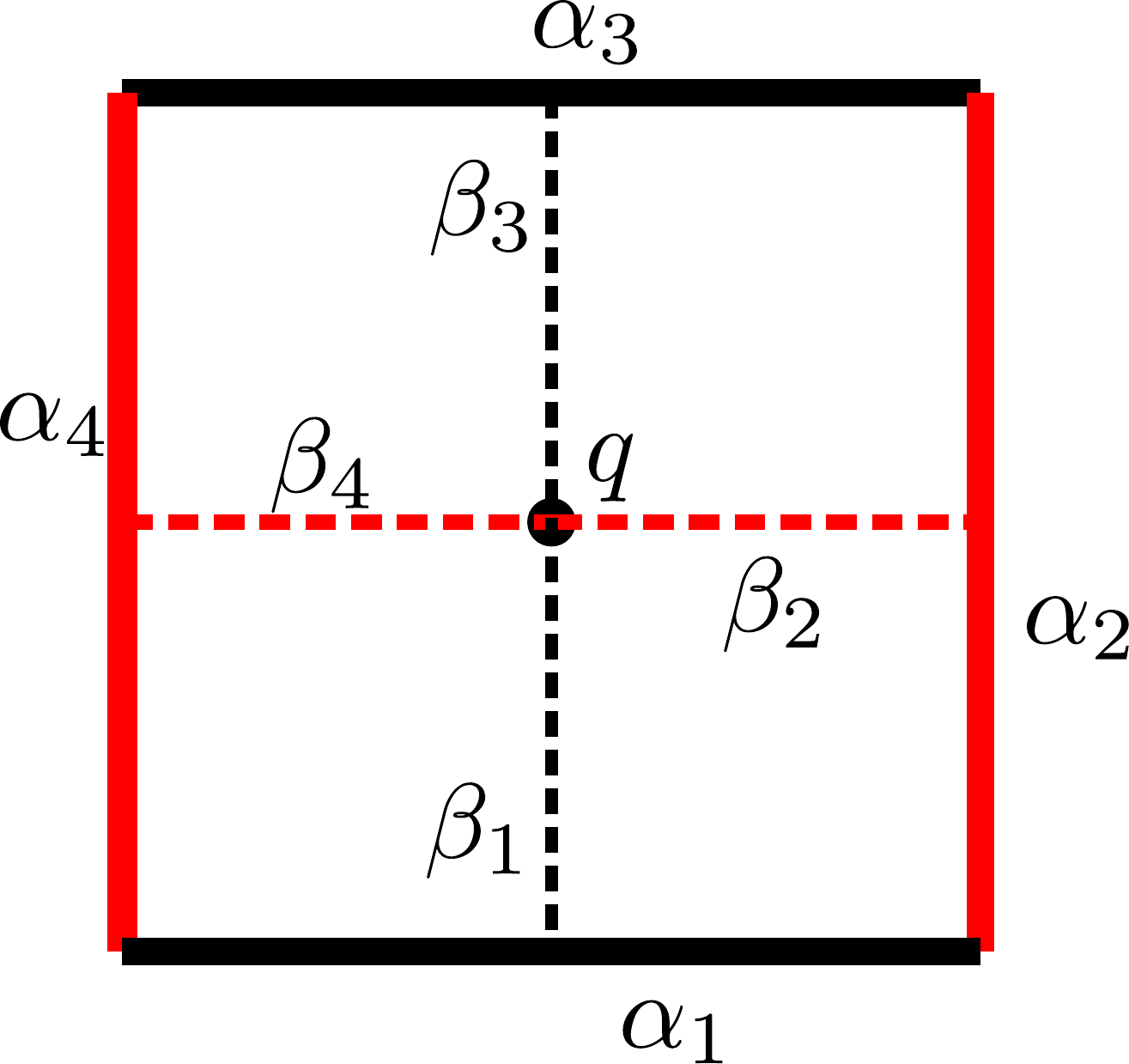}
\caption{A 4-circuit in the graph $G'$ and the newedges. The dotted lines represent the newedges} \label{figure 4}
 \end{center}

\end{figure}

As a consequence of Lemma \ref{proof simp conn}, the fundamental group of $M_{\Delta}$ is the free group $F_g$. Now,
$M_{\Delta}$ is a compact topological 3-manifold without boundary. Using Perelman's solution to Poincar\aca{e} conjecture and 5.3 in \cite{Hem},  we deduce that $M_\Delta$ is the connected sum of $g$ copies of $S^2 \times S^1$.
\vskip 0.3cm
To complete the proof of Theorem \ref{main claim} , we only need to show that $Q_R$ and $Q_B$ are maximal sphere systems in standard form. To reach this goal,  we prove a preliminary Lemma:
\begin{lemma} \label {comp 3-holed spheres}
All complementary components of $\widetilde{Q_B}$ and the complementary components of $\widetilde{Q_R}$ in $M_{\widetilde{\Delta}}$ are three holed 3-spheres.
\end{lemma}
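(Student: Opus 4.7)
By symmetry between the two colours, it suffices to treat $\widetilde{Q_B}$; the argument for $\widetilde{Q_R}$ is identical with the roles of horizontal and vertical exchanged. The plan is to identify the closure of each complementary component with a compact $3$-submanifold of $M_{\widetilde{\Delta}}$ bounded by three 2-spheres, show it is simply connected, and then invoke the Poincar\'e conjecture.

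First, I would identify the components of $M_{\widetilde{\Delta}} \setminus \widetilde{Q_B}$ via the projection $p_1 : \widetilde{\Delta} \to T_1$. A path in $M_{\widetilde{\Delta}}$ avoids $\widetilde{Q_B}$ if and only if it crosses only red 2-pieces, and any red 2-piece $p(e)$ joins two 3-pieces whose vertices are the endpoints of the red edge $e$, hence lie in the same fiber of $p_1$. Thus the components of $M_{\widetilde{\Delta}} \setminus \widetilde{Q_B}$ correspond bijectively to vertices $v \in T_1$; let $\bar R_v$ denote the closure of the component associated with $v$. By Lemmas \ref{edge fiber connected} and \ref{connectedness claim 2} the fiber $F_v$ is a finite connected tree, so $\bar R_v = \bigcup_{w \in F_v} P(w)$ is compact and connected. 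Its boundary consists of the black 2-pieces $p(e)$ whose edge $e$ has $p_1(e)$ incident to $v$, together with their bounding 1-pieces; applying Lemma \ref{lemma2}, these assemble into one embedded 2-sphere per edge of $T_1$ at $v$. Since $T_1$ is trivalent, $\partial \bar R_v$ is a disjoint union of exactly three 2-spheres, and $\bar R_v$ is a compact orientable 3-manifold with boundary.

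To show $\bar R_v$ is simply connected, I would use the standard innermost disc argument. Given a loop $l \subset \bar R_v$, Lemma \ref{proof simp conn} furnishes a bounding disc $D \subset M_{\widetilde{\Delta}}$, which can be put in transverse position with $\widetilde{Q_B}$ so that $D \cap \widetilde{Q_B}$ is a finite disjoint family of simple closed curves on $D$. An innermost such curve $c$ bounds a subdisc $D' \subset D$ meeting $\widetilde{Q_B}$ only along $c$; the component of $\widetilde{Q_B}$ through $c$ is an embedded 2-sphere by Lemma \ref{lemma2}, so $c$ bounds a disc $D''$ on it. Replace $D'$ by $D''$ pushed slightly to the side of that sphere on which $D'$ lies; this reduces $|D \cap \widetilde{Q_B}|$ strictly. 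Iterating, one obtains a disc bounding $l$ that lies entirely in $\bar R_v$, whence $\pi_1(\bar R_v) = 1$.

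Finally, capping off the three boundary spheres of $\bar R_v$ by 3-balls yields a closed, simply connected, orientable 3-manifold, which is $S^3$ by Perelman's resolution of the Poincar\'e conjecture. Hence $\bar R_v$ is $S^3$ with three open 3-balls removed, i.e., a three-holed 3-sphere. The main delicacy is ensuring in the innermost disc step that the replacement disc $D''$ is pushed to the correct side of its sphere so that the resulting disc stays in $\bar R_v$; once that is handled, the rest is bookkeeping with the projection $p_1$ together with the invocation of Perelman.
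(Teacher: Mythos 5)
Your proof is correct, but it takes a genuinely different route from the paper's. For the identification of the complementary components and the count of their boundary spheres, you use the projection $p_1:\widetilde{\Delta}\to T_1$ and the vertex fibres $F_v$ (Lemmas \ref{edge fiber connected} and \ref{connectedness claim 2}), realising each component as the union of the $3$-pieces lying over a single vertex of the trivalent tree $T_1$; the paper instead argues inside the embedded graph $G$, counting the black bisectors that bound a complementary region via property (5). These are two phrasings of the same fact, and yours is arguably the more transparent one. The real divergence is the simple-connectedness step: the paper homotopes an arbitrary loop of the component $C$ into the subgraph $G_C=G\cap C$ and kills it there using Lemma \ref{lemma3} and the fact that every $4$-circuit lies in a single $3$-piece, so the argument never leaves $C$; you instead take a singular bounding disc in the ambient manifold (available by Lemma \ref{proof simp conn}, which is proved before this lemma, so there is no circularity) and push it into $\bar R_v$ by innermost-circle surgery along the spheres of $\widetilde{Q_B}$. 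Your argument is the classical $3$-manifold one and is more general (it shows that complementary components of any family of disjoint embedded spheres in a simply connected $3$-manifold are simply connected), at the price of transversality and surgery of singular discs; the paper's is purely combinatorial and stays within the piece decomposition. On the point you flag as delicate: the replacement disc $D''$ should be pushed to the side of the sphere on which $D\setminus D'$ lies near the innermost circle $c$ (not the side on which $D'$ lies), so that it glues to $D\setminus D'$ without re-crossing the sphere; in any case either choice eliminates the circle after a small perturbation, and no choice of side is needed to conclude, since once $l$ has been pushed off $\partial \bar R_v$ into the open component $R_v$, the final surgery-free disc lies in the unique component of $M_{\widetilde{\Delta}}\setminus\widetilde{Q_B}$ containing $l$, namely $R_v$.
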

\begin{proof}
\par Let $C$ be a component of $M_{\widetilde{\Delta}}\setminus \widetilde{Q_B}$; note that $C$ is a 3-manifold with boundary and its boundary consists of a certain number of spheres.
Using the construction of the graph $G$ described on page \pageref{construction graph G}, we show that $C$ is simply connected, compact, and has exactly three boundary components. We deduce, using Perelman's solution to Poincar\aca{e} conjecture, that $C$ is a 3-holed 3-sphere.
By symmetry, the same can be proven for any complementary component of $\widetilde{Q_R}$.
\vskip 0.1cm
Note first that $C$ is a union of 3-pieces, 2-pieces and 1-pieces of $M_{\widetilde{\Delta}}$. Here each 1-piece is contained in a boundary component of $C$, hence Lemma \ref{lemma3} implies that for each 3-piece $P$ in $C$ the inclusion $P \rightarrow C$ induces the trivial map on the level of fundamental groups.
Note also that,
if $G \subset   M_{\widetilde{\Delta}}$ is the graph we constructed on page \pageref{construction graph G},
the intersection between a component of $\widetilde{Q_B}$ and the graph $G$ is exactly a black bisector in the graph $G$, hence the intersections between $G$ and the components of $M_{\widetilde{\Delta}} \setminus \widetilde{Q_B}$ are the complementary components of black bisectors in $G$.
Denote by $G_C$ the graph $C \cap G$.
\vskip 0.1cm
Now, each loop in the graph $G_C$ is nullhomotopic in $C$ (since each 4-circuit is entirely contained in a 3-piece).
Moreover, using the same argument as in the proof of Lemma \ref{proof simp conn}, we can show that any loop $l$ in $C$ is homotopic to a loop in the graph $G_C$ (namely, we can suppose that $l$ meets each 2-piece in a vertex of $G_C$, then we observe that for any 3-piece $P$, $l \cap P$ can be homotoped into $G_C \cap P$).
This suffices to deduce that $C$ is simply connected.
\vskip 0.1cm
\par To see that $C$ is compact, note that $G$ is locally finite and bisectors in $G$ are finite  (since hyperplanes in $\widetilde{\Delta}$ are finite); therefore $C$ is the union of a finite number of 3-pieces.
\vskip 0.1cm
To conclude, we show that $C$ has exactly three boundary components.
To this aim, note that black bisectors are perpendicular to black edges, hence, property (5) in Section \ref{The core of two trees} implies that collapsing each red edge and each  black  bisectors in the graph $G$ to a point,  yields a trivalent tree, i. e. each complementary component of black bisectors in $G$ is bounded by exactly three bisectors. As a consequence, $C$ is bounded by exactly three spheres.
\end{proof}

\vskip 0.2cm
We can now prove the following:

\begin{lemma}\label{lemma9}
Each component of $Q_R$ and $Q_B$ is an essential sphere in $M_\Delta$.
Moreover $Q_R$ and $Q_B$ are maximal sphere systems in $M_{\Delta}$, in standard form with respect to each other.
\end{lemma}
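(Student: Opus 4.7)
The plan is to verify essentiality, the sphere-system condition (disjointness and pairwise non-isotopy), maximality, and standard form in turn, working in the universal cover $M_{\widetilde{\Delta}}$ where Lemma~\ref{comp 3-holed spheres} is available, and then descending to $M_\Delta$ via the free $F_g$-action. Lemma~\ref{lemma2} already gives that each component of $Q_B$ (and $Q_R$) is an embedded sphere, so the first task is essentiality. If some component $\sigma \subset Q_B$ bounded a ball in $M_\Delta$, I would lift to $M_{\widetilde{\Delta}}$ and pass to an innermost component of $\widetilde{Q_B}$ contained in the lifted ball; this produces a complementary component of $\widetilde{Q_B}$ that is itself a ball, contradicting Lemma~\ref{comp 3-holed spheres}, which says every such component has three boundary spheres. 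An innermost argument of the same flavour shows that no two components of $\widetilde{Q_B}$ are isotopic: an isotopic pair would cobound an $S^2 \times I$ region in $\widetilde{M_\Delta}$, and choosing such a pair innermost (no other component of $\widetilde{Q_B}$ strictly inside the region) exhibits a complementary component of $\widetilde{Q_B}$ with only two boundary spheres, again contradicting Lemma~\ref{comp 3-holed spheres}. Disjointness comes for free from the construction, since different horizontal hyperplanes of $\widetilde{\Delta}$ are disjoint subcomplexes.

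For maximality of $Q_B$ it suffices to show that every complementary component $C$ of $Q_B$ in $M_\Delta$ is a three-holed 3-sphere. Pick a connected component $\widetilde{C}$ of its preimage in $M_{\widetilde{\Delta}}$; by Lemma~\ref{comp 3-holed spheres}, $\widetilde{C}$ is a compact, simply connected three-holed 3-sphere, and the restriction $\widetilde{C} \to C$ of the covering is itself a covering with deck group $H = \mathrm{Stab}_{F_g}(\widetilde{C})$. Freeness and proper discontinuity of the $F_g$-action on $M_{\widetilde{\Delta}}$, together with compactness of $\widetilde{C}$, force $H$ to be finite; since $F_g$ is torsion-free, $H$ is trivial, so $\widetilde{C} \to C$ is a homeomorphism and $C$ is a three-holed 3-sphere. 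The same argument applies to $Q_R$.

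It remains to verify standard form. The handlebody condition on complementary components of $Q_B \cup Q_R$ is built into the construction: each such component is a 3-piece $P(v)$, designed to be a handlebody. For minimal form, Lemma~\ref{sphereintersection} reduces the task to checking that any two spheres $\sigma_1 \in \widetilde{Q_B}$, $\sigma_2 \in \widetilde{Q_R}$ meet in at most one circle, and do so exactly when they induce non-nested partitions on $\mathrm{End}(\widetilde{M_\Delta})$. The single-intersection bound is immediate: $\widetilde{\Delta}$ is CAT(0) by Cartan--Hadamard (simply connected by Lemma~\ref{proof simp conn}, locally CAT(0) by property~(3)), so any two of its hyperplanes cross at most once. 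The non-nested correspondence is the delicate step, and I expect it to be the main obstacle: the plan is to use the projections of property~(5) to embed $\widetilde{\Delta} \hookrightarrow T_1 \times T_2$, identify $\partial T_1 \cong \mathrm{End}(\widetilde{M_\Delta}) \cong \partial T_2$ via the retractions of Section~\ref{Spheres, partitions and intersections} (noting that $T_1$ and $T_2$ coincide with the dual trees to $\widetilde{Q_B}$ and $\widetilde{Q_R}$ respectively), match each sphere's end partition to the boundary partition of its dual edge, and establish the converse to Proposition~\ref{equalityconstructions}, namely that $\widetilde{\Delta}$ realises the full core $C(T_1, T_2)$ inside $T_1 \times T_2$; the structural properties (1)--(5) together with the already-established match of vertex links (Figure~\ref{TABLE 5}) should force no core square to be missing from $\widetilde{\Delta}$ and no extraneous square to be present.
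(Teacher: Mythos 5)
Your handling of essentiality, pairwise non-isotopy of components, maximality, and the handlebody condition is correct, and in places more careful than the paper itself, which passes from Lemma \ref{comp 3-holed spheres} to essentiality and maximality in a single sentence; your innermost-sphere arguments and the stabiliser argument showing that a complementary component of $Q_B$ downstairs is still a three-holed $3$-sphere are exactly the right justifications. The CAT(0)/Cartan--Hadamard bound giving at most one intersection circle between a component of $\widetilde{Q_B}$ and one of $\widetilde{Q_R}$ also matches the paper.

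The genuine gap is the step you yourself flag as ``the main obstacle'' and then do not close: showing that a single intersection circle is never removable, equivalently (via Lemma \ref{sphereintersection}) that intersecting spheres induce non-nested partitions on the space of ends. Your proposed route --- prove that $\widetilde{\Delta}$ coincides with the core $C(T_1,T_2)$ of its own dual trees --- is circular at precisely this point: the inclusion $\widetilde{\Delta}\subseteq C(T_1,T_2)$ \emph{is} the assertion ``intersecting implies non-nested,'' and the proof of Proposition \ref{equalityconstructions}, whose converse you invoke, already uses standard form. Properties (1)--(5) and the vertex-link table are combinatorial facts about the abstract complex; they say nothing by themselves about end partitions of $M_{\widetilde{\Delta}}$, so they cannot ``force'' the identification you want without further geometric input. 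The paper supplies that input with a short local observation absent from your proposal: a removable single intersection circle between $\sigma_1\in\widetilde{Q_B}$ and $\sigma_2\in\widetilde{Q_R}$ would cut off a ball bounded by a disk $2$-piece of each sphere, i.e.\ a $3$-piece whose boundary pattern consists of exactly two disks, and no such $3$-piece arises in the construction (its vertex link is one of the patterns excluded by hypothesis ($**$)). Combined with the at-most-one-circle bound, this yields minimal form directly; you need this observation, or an equivalent non-circular argument, to complete the proof.
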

\begin{proof}

Denote as usual by $\widetilde {Q_R}$ and $\widetilde {Q_B}$ the full lifts of $Q_R$ and $Q_B$ to the universal cover $M_{\widetilde{\Delta}}$.
By Lemma \ref{comp 3-holed spheres} each component of
$M_{\widetilde{\Delta}} \setminus \widetilde{Q_B}$ and of $M_{\widetilde{\Delta}} \setminus \widetilde{Q_R}$ is a three holed 3-sphere; consequently each component of $M_{\Delta} \setminus Q_B$ and of $M_{\Delta} \setminus Q_R$ is a three holed 3-sphere, which implies that each sphere in $Q_R$ or $Q_B$ is essential and that $Q_R$ and $Q_B$ are maximal sphere systems.
\vskip 0.1cm

\par Now recall that $Q_R$ and $Q_B$ being in standard form means that $Q_R$ and $Q_B$ are in  minimal form (i. e. each sphere in $\widetilde{Q_B}$ intersects each sphere in $\widetilde{Q_R}$ minimally) and that all complementary components of  $Q_R \cup Q_B$ in $M_\Delta$ are handlebodies.
\par The latter condition is satisfied by construction.
\par To see that $Q_R$ and $Q_B$ are in minimal form, note that components of $\widetilde{Q_B}$ and $\widetilde{Q_R}$ correspond to hyperplanes in
$\widetilde{\Delta}$.
Now, $\widetilde{\Delta}$ is  simply connected and locally CAT(0); therefore it is $CAT(0)$, by a generalisation of Cartan-Hadamard Theorem (\citep{BriHae} p. 193). Since two hyperplanes in a CAT(0) cube complex intersect at most once,
 a component of $\widetilde{Q_R}$ and a component of $\widetilde{Q_B}$ intersect at most once in $M_{\widetilde{\Delta}}$. Moreover, by construction, no 3-piece in $M_{\widetilde{\Delta}}$ is bounded by two disks.
These two facts imply that each sphere in $\widetilde{Q_R}$ intersects each sphere in
$\widetilde{Q_B}$ minimally.
\end{proof}
\par This concludes the proof of Theorem \ref{main claim}.

\begin{remark}\label{it is an inverse construction}
Note that the construction we described above is in some sense  \virgolap inverse\virgolch to the one we described in Section  \ref{Dual square complex}, as we explain below.

If we apply the construction described in Section \ref{Dual square complex} to the manifold $M_\Delta$ (resp. $M_{\widetilde{\Delta}}$), then we obtain the complex $\Delta$ (resp. $\widetilde{\Delta}$), i. e. $\Delta$
(resp. $\widetilde{\Delta}$) is the dual square complex associated to $(M_\Delta$, $Q_B$, $Q_R)$ (resp. to $(M_{\widetilde{\Delta}}$, $\widetilde{Q_B}$, $\widetilde{Q_R})$).

On the other hand, if $(\Sigma_1, \Sigma_2)$ is a pair of embedded maximal sphere systems in $M_g$ in standard form, and $\Delta(M_g, \Sigma_1, \Sigma_2)$ is the dual square complex, then applying the above construction to  $\Delta(M_g, \Sigma_1, \Sigma_2)$ yields a 3-manifold $M_\Delta$, with a pair of maximal sphere systems in standard form: $(Q_B, Q_R)$. Then, there exists a homeomorphism $F: M_g \rightarrow M_\Delta$ mapping the pair $(\Sigma_1, \Sigma_2)$ to the pair $(Q_B, Q_R)$. In fact there is a bijective correspondence between the pieces of $(M_g, \Sigma_1, \Sigma_2)$ and the pieces of $(M_\Delta, Q_B, Q_R)$ (since both sets correspond to the cells of $\Delta$), and this correspondence respects the glueing relation (i. e. an $n-1$-piece of $(M_g, \Sigma_1, \Sigma_2)$ lies on the boundary of an $n$-piece of $(M_g, \Sigma_1, \Sigma_2)$ if and only if the same is true for the corresponding pieces of $(M_\Delta, Q_B, Q_R)$). The map $F$ then maps each piece of
$(M_g, \Sigma_1, \Sigma_2)$ to the corresponding piece of $(M_\Delta, Q_B, Q_R)$ homeomorphically.
\end{remark}

\section{Consequences and applications}
\label{Consequences}


A first consequence of the constructions described in Section \ref{Dual square complex} and in Section \ref{Inverse construction} is  that a pair of sphere systems in standard form is somehow determined by its dual square complex. Namely:

\begin{lemma} \label{lemma1consequence2}
Let $(\Sigma_1, \Sigma_2)$, $(\Sigma_1', \Sigma_2')$ be two pairs of embedded maximal sphere systems in $M_g$, both in standard form with respect to each other.
Suppose the dual square complexes $\Delta(M_g, \Sigma_1, \Sigma_2)$ and
$\Delta(M_g, \Sigma_1', \Sigma_2')$ are isomorphic as coloured square complexes. Then there exists a homeomorphism $H: M_g \rightarrow M_g$ so that $H(\Sigma_i)$ is $\Sigma_i'$ for $i=1,2$.
\end{lemma}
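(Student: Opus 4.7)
The plan is to invoke the functoriality implicit in the inverse construction of Section \ref{Inverse construction}, combined with Remark \ref{it is an inverse construction}. Write $\Delta = \Delta(M_g, \Sigma_1, \Sigma_2)$ and $\Delta' = \Delta(M_g, \Sigma_1', \Sigma_2')$, and fix a colour-preserving isomorphism $\phi : \Delta \to \Delta'$. Applying the construction of Section \ref{Inverse construction} to $\Delta$ and $\Delta'$ produces manifolds $M_\Delta$ and $M_{\Delta'}$ with pairs of sphere systems $(Q_B,Q_R)$ and $(Q_B',Q_R')$. By Remark \ref{it is an inverse construction}, there exist homeomorphisms $F: M_g \to M_\Delta$ and $F': M_g \to M_{\Delta'}$ mapping $(\Sigma_1,\Sigma_2) \mapsto (Q_B,Q_R)$ and $(\Sigma_1',\Sigma_2') \mapsto (Q_B',Q_R')$ respectively. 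It thus suffices to produce a homeomorphism $G : M_\Delta \to M_{\Delta'}$ carrying $(Q_B, Q_R)$ to $(Q_B', Q_R')$; then $H := (F')^{-1} \circ G \circ F$ is the required homeomorphism of $M_g$.

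To build $G$, I would proceed cell-by-cell following the isomorphism $\phi$. The isomorphism induces a colour-preserving bijection between the $1$-pieces, $2$-pieces and $3$-pieces of the two decompositions. Each $1$-piece is a circle, so any bijection of $1$-pieces can be realised by homeomorphisms $c(s) \to c(\phi(s))$ (chosen arbitrarily to start). Each $2$-piece is a planar surface whose homeomorphism type depends only on the number of adjacent squares in the complex, a number preserved by $\phi$; having fixed the homeomorphisms on the boundary circles, we extend to a homeomorphism $p(e) \to p(\phi(e))$ (possibly after reparametrising our initial circle identifications, using that any self-homeomorphism of $S^1$ extends over a disk, annulus, or pair-of-pants). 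Finally, each $3$-piece $P(v)$ is the handlebody whose boundary pattern $S(v)$ is assembled from the $2$- and $1$-pieces on its star in $\Delta$ according to the vertex link; since $\phi$ preserves vertex links (Figure \ref{TABLE 3}), our assembled homeomorphism of $S(v)$ with $S(\phi(v))$ exists, and by the defining property of the $3$-piece, extends to a homeomorphism of handlebodies $P(v) \to P(\phi(v))$.

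The one subtle point is that the extension over a $3$-piece is only defined up to a choice of gluing; two gluings that differ by a Dehn twist about an essential curve in a $2$-piece (i.e.\ parallel to a $1$-piece) need not agree. This is exactly the indeterminacy flagged in Remark \ref{Dehn twist ok}, and it is resolved in the same way: a $1$-piece bounds an embedded disk in $M_\Delta$ (by Lemma \ref{lemma2}), so any such Dehn twist extends to a self-homeomorphism of the $3$-piece via a longitude-preserving Dehn surgery of type $(1,n)$, which does not change the homeomorphism type. Hence, by adjusting the extension over each $3$-piece by such a twist if necessary, the local homeomorphisms do glue consistently along the $2$-skeleton. The main obstacle, conceptually, is this consistency of gluings, but Remark \ref{Dehn twist ok} disposes of it.

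Assembling the local homeomorphisms produces the required $G: M_\Delta \to M_{\Delta'}$; by construction $G$ carries each piece of $(M_\Delta, Q_B, Q_R)$ to the corresponding piece of $(M_{\Delta'}, Q_B', Q_R')$, and in particular sends $Q_B \mapsto Q_B'$ and $Q_R \mapsto Q_R'$. Composing with $F$ and $(F')^{-1}$ gives the homeomorphism $H: M_g \to M_g$ with $H(\Sigma_i) = \Sigma_i'$ for $i=1,2$, completing the proof.
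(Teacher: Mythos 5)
Your proposal is correct and follows essentially the same route as the paper: apply the inverse construction to both dual square complexes, obtain a homeomorphism $M_\Delta \to M_{\Delta'}$ respecting the piece decompositions (with the gluing indeterminacy absorbed by Remark \ref{Dehn twist ok}), and conclude via Remark \ref{it is an inverse construction}. The paper compresses the cell-by-cell assembly of $G$ into the phrase \virgolap by construction\virgolch; you have simply spelled out that step explicitly.
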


\begin{proof}
If we denote by$(M_\Delta, Q_R, Q_B)$
(resp.  $(M_\Delta', Q_R', Q_B')$) the triple obtained by applying the construction of Section \ref{Inverse construction} to
$\Delta(M_g, \Sigma_1, \Sigma_2)$ (resp. to
$\Delta(M_g, \Sigma_1', \Sigma_2')$), then, by construction and by remark \ref{Dehn twist ok}, there is a homeomorphism $M_\Delta \rightarrow M_{\Delta'}$ mapping the pair $(Q_B,  Q_R)$ to the pair
 $(Q_B',  Q_R')$. Hence, Lemma \ref{lemma1consequence2} immediately follows from Remark
\ref{it is an inverse construction}
\end{proof}

\vskip 0.3cm
Note now that, by Proposition \ref{equalityconstructions}, constructing a dual square complex for a triple $(M_g, \Sigma_1, \Sigma_2)$ does not really require $(\Sigma_1, \Sigma_2)$ to be in standard form. In fact, we can construct the dual square complex as (the quotient of) the core of the two dual trees,
endowed with the group actions induced by the  $F_g$-action on $\widetilde{M_g}$.
This observation leads to the following:

\begin{theorem} \label{consequence1}
Let $M_g$ be the connected sum of $g$ copies of $S^2 \times S^1$ and let $\Sigma_1$, $\Sigma_2$ be two embedded maximal sphere systems which satisfy hypothesis ($*$) in the introduction. Then there exist maximal sphere systems $(\Sigma_1 ', \Sigma_2 ')$ such that $\Sigma_i '$ is homotopic to $\Sigma_i$ for $i=1,2$, and $\Sigma_1 '$, $\Sigma_2 '$ are in standard form.
\end{theorem}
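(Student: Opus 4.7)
My plan is to encode $(\Sigma_1, \Sigma_2)$ as a pair of trivalent $F_g$-trees with common boundary, then use the core and inverse-construction machinery of Sections \ref{The core of two trees} and \ref{Inverse construction} to realise the same combinatorial data by a pair in standard form in an abstract copy of $M_g$, and finally transport that pair back to $M_g$ so that it is homotopic to the original pair. The key observation is that the dual tree of a sphere system is a complete homotopy invariant of the system (by Lemma \ref{spherepartition}), so it suffices to produce standard-form sphere systems with the prescribed dual trees.

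In detail, let $T_1, T_2$ be the dual trees to $\widetilde{\Sigma_1}, \widetilde{\Sigma_2}$ in $\widetilde{M_g}$; these are trivalent trees with free, properly discontinuous, cocompact $F_g$-actions, and the canonical identifications $\partial T_i = End(\widetilde{M_g})$ are $F_g$-equivariant. By Lemma \ref{spherepartition}, hypothesis ($*$) translates into hypothesis ($**$) for the pair $(T_1, T_2)$. Form the core $C(T_1, T_2)$ and its quotient $\Delta := \Delta(T_1, T_2)$; by Section \ref{The core of two trees} this is a finite V-H square complex satisfying properties (2), (3) and (1')-(5'). Now run the construction of Section \ref{Inverse construction} on $\Delta$ to produce a closed 3-manifold $M_\Delta$ together with two embedded maximal sphere systems $Q_B, Q_R \subset M_\Delta$ in standard form. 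By Theorem \ref{main claim}, $M_\Delta$ is homeomorphic to $M_g$; moreover, by construction the universal cover $M_{\widetilde{\Delta}}$ is built equivariantly from $\widetilde{\Delta} = C(T_1, T_2)$, and the dual trees of $\widetilde{Q_B}, \widetilde{Q_R}$ in $M_{\widetilde{\Delta}}$ are canonically identified with $T_1, T_2$ as $F_g$-trees with their given boundary identifications.

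It remains to choose a homeomorphism $F : M_\Delta \to M_g$ whose induced map on fundamental groups is an inner automorphism of $F_g$. Any homeomorphism $M_\Delta \to M_g$ induces some $[\alpha] \in Out(F_g)$; using the surjectivity of $Mod(M_g) \to Out(F_g)$ recalled in the introduction, we may post-compose with a self-homeomorphism of $M_g$ representing $[\alpha]^{-1}$ to arrange that $F_*$ is inner. Define $\Sigma_1' := F(Q_B)$ and $\Sigma_2' := F(Q_R)$. Standard form is invariant under homeomorphism, so $(\Sigma_1', \Sigma_2')$ is in standard form in $M_g$. Since $F_*$ is inner and the dual tree of $\widetilde{Q_B}$ (resp.\ $\widetilde{Q_R}$) is $T_1$ (resp.\ $T_2$), the dual tree of $\widetilde{\Sigma_i'}$ in $\widetilde{M_g}$ is canonically isomorphic to $T_i$ as an $F_g$-tree, so $\widetilde{\Sigma_i}$ and $\widetilde{\Sigma_i'}$ induce the same partitions of $End(\widetilde{M_g})$. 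Applying Lemma \ref{spherepartition} sphere by sphere gives $\Sigma_i \simeq \Sigma_i'$ for $i = 1,2$.

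The main obstacle I foresee is precisely this last alignment step: we need the $F_g$-identification on $\pi_1(M_\Delta)$ coming from the equivariance of the inverse construction to match the given $F_g$-identification on $\pi_1(M_g)$ up to inner automorphism. Surjectivity of $Mod(M_g) \to Out(F_g)$ handles this at the level of outer automorphisms, but one must verify that after composing with the corresponding mapping class the dual trees of the image sphere systems are actually $T_1, T_2$ on the nose as $F_g$-trees, rather than merely up to an outer twist. This amounts to careful bookkeeping of the equivariance throughout Sections \ref{The core of two trees} and \ref{Inverse construction}.
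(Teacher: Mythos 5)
Your proposal is correct and follows essentially the same route as the paper: pass to the dual trees $T_1, T_2$, form the core, run the inverse construction to obtain a standard-form pair whose dual trees are again $T_1, T_2$, and transport it back to $M_g$ using Lemma \ref{spherepartition} sphere by sphere. The only real difference is where the normalisation happens: the paper stays in the universal cover, applies the inverse construction to $C(T_1,T_2)$ itself, and directly chooses an $F_g$-equivariant homeomorphism $M_C \to \widetilde{M_g}$ consistent with the identification of the spaces of ends (which is exactly the ``bookkeeping'' you defer to the final paragraph), whereas you normalise downstairs via surjectivity of $Mod(M_g) \to Out(F_g)$ and then must additionally observe that an $F_g$-equivariant homeomorphism of the boundary is the identity, so that equivariantly isomorphic dual trees really do give identical partitions of $End(\widetilde{M_g})$.
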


Before proving Theorem \ref{consequence1} we clarify some terminology,  given two infinite trivalent trees $T$ and $T'$ endowed with an identification of their boundaries, we say that $T$ and $T'$ \emph{coincide}, if there is a simplicial isomorphism $\varphi : T \rightarrow T'$ such that for each edge $e$ in $T$ its image $\varphi(e)$ induces the same partition as $e$ on the boundary. We are now ready to prove Theorem \ref{consequence1}.

\begin{proof}
Let $\widetilde{\Sigma_1}$ and $\widetilde{\Sigma_2}$ be the full lifts of $\Sigma_1$ and $\Sigma_2$ to $\widetilde{M_g}$ and let $T_1$ and $T_2$ be the dual trees to $\widetilde{\Sigma_1}$ and $\widetilde{\Sigma_2}$ respectively.
Note that $T_1$ and $T_2$ are trivalent trees and they are both endowed with a geometric action by the group $F_g$, induced by the action of $F_g$ on $\widetilde{M_g}$.
Let $C(T_1, T_2)$ be the core of $T_1$ and $T_2$. 
By applying the construction of Section  \ref{Inverse construction} to  $C(T_1, T_2)$, we obtain a triple $(M_C, \widetilde{Q_B}, \widetilde{Q_R})$, endowed with a geometric action of the group $F_g$, inherited by the group action on $C(T_1, T_2)$.  Here $M_C$ is homeomorphic to $\widetilde{M_g}$ and
$\widetilde{Q_B}$,  $\widetilde {Q_R}$ are two embedded maximal sphere systems in standard form with respect to each other.

\par  By construction, $C(T_1, T_2)$ is the  dual square complex to the triple  $({M_C}, \widetilde{Q_B}, \widetilde{Q_R})$, and $T_1$, $T_2$ coincide with the dual trees to $\widetilde{Q_B}$
and $\widetilde{Q_R}$ respectively.
\par The space of ends of $M_C$ can be identified to the space of ends of $\widetilde{M_g}$, as they both can be identified to the boundaries of $T_1$ and $T_2$. Moreover, since the tree dual to $\widetilde{M_g}$ and $\widetilde{\Sigma_1}$ coincides to the tree dual to $M_C$ and $\widetilde{Q_B}$ (they both coincide with the tree $T_1$), then for each sphere $\sigma$ in $\widetilde{\Sigma_1}$ there is a sphere in $\widetilde{Q_B}$ inducing the same partition as $\sigma$ on the space of ends, and for each sphere $s$ in $\widetilde{Q_B}$ there is a sphere in $\widetilde{\Sigma_1}$ inducing the same partition as $s$.
The same holds for $\widetilde{\Sigma_2}$ and $\widetilde{Q_R}$.
\par We can find an $F_g$-equivariant homeomorphism $H: M_C
\rightarrow \widetilde{M_g}$ which is consistent with the identification on the space of ends (inherited from identifying both spaces of ends with the boundaries of $T_1$ and $T_2$).
Denote $H(\widetilde{Q_B})$ by $\widetilde{\Sigma_1'}$ and  $H(\widetilde{Q_R})$ by $\widetilde{\Sigma_2'}$.
\par Now, the systems $\widetilde{\Sigma_1'}$ and
$\widetilde{\Sigma_2'}$ are maximal and are in standard form with respect to each other, since they are homeomorphic image of two maximal sphere systems in standard form.
Moreover, for each sphere in $\widetilde{\Sigma_1}$ (resp. $\widetilde{\Sigma_2}$) there is a sphere in $\widetilde{\Sigma_1'}$ (resp. $\widetilde{\Sigma_2'}$) inducing the same partition on the space of ends and vice versa.
Hence, by Lemma \ref{spherepartition} for $i=1,2$ the sphere system  $\widetilde{\Sigma_i}$ is homotopic in $\widetilde{M_g}$ to the sphere system  $\widetilde{\Sigma_i'}$.
\par  Let $\Sigma_1'$ and  $\Sigma_2'$ in $M_g$ be the projections of
$\widetilde{\Sigma_1'}$ and $\widetilde{\Sigma_2'}$ through the covering map. These are two embedded maximal sphere systems in $M_g$ in standard form with respect to each other, and moreover for $i=1,2$, the sphere system $\Sigma_i '$ is homotopic in $M_g$ to the sphere system
$\Sigma_i$. 
\end{proof}

\par To summarise, in the proof of Theorem \ref{consequence1}, we have shown a constructive way to find a standard form for two maximal sphere systems in $M_g$.
Note that Theorem \ref{consequence1} can also be proven using the existence of Hatcher's normal form (Proposition 1.1 in \citep{Hat1}).


\vskip 0.2cm
Another consequence of the construction of Section \ref{Inverse construction} is a kind of uniqueness result for standard form. More precisely:

\begin{theorem} \label{consequence2}
Let $(\Sigma_1, \Sigma_2)$, $(\Sigma_1 ', \Sigma_2 ')$ be two pairs of embedded maximal sphere systems in $M_g$. Suppose that both pairs of sphere systems are in standard form and satisfy hypothesis ($*$) in the introduction.
Suppose also that $\Sigma_i$ is homotopic to $\Sigma_i '$ for $i=1,2$.
Then there exists a homeomorphism $F:M_g \rightarrow M_g$
such that $F(\Sigma_i)= \Sigma_i '$ for $i=1,2$.
The homeomorphism $F$ induces an inner automorphism of the fundamental group of $M_g$.
\end{theorem}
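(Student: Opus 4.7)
The plan is to show that under the hypotheses the two pairs yield the same (coloured) dual square complex in a canonical, $F_g$-equivariant way, and then invoke Lemma~\ref{lemma1consequence2} together with a careful choice of the equivariant lift. First I would pass to the universal cover: let $\widetilde{\Sigma_i}$, $\widetilde{\Sigma_i'}$ denote the full lifts. Since $\Sigma_i$ is homotopic to $\Sigma_i'$ in $M_g$, the homotopy lifts $F_g$-equivariantly, and by the bijection between clopen partitions of $\mathrm{End}(\widetilde{M_g})$ and homotopy classes of embedded spheres (Lemma~\ref{spherepartition}), each component of $\widetilde{\Sigma_i}$ induces the same partition as a unique component of $\widetilde{\Sigma_i'}$.

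Next I would identify the dual trees. The dual trees $T_i$ of $\widetilde{\Sigma_i}$ and $T_i'$ of $\widetilde{\Sigma_i'}$ are trivalent, both carry the deck $F_g$-action, and by the previous step they carry the same boundary partitions edge by edge. Hence $T_i$ and $T_i'$ coincide in the sense introduced before Theorem~\ref{consequence1}, and the canonical simplicial isomorphism $T_i \to T_i'$ is $F_g$-equivariant. Applying Proposition~\ref{equalityconstructions} to both pairs, the complexes $\Delta(\widetilde{M_g},\widetilde{\Sigma_1},\widetilde{\Sigma_2})$ and $\Delta(\widetilde{M_g},\widetilde{\Sigma_1'},\widetilde{\Sigma_2'})$ are both realised, as coloured $F_g$-square complexes, as the core $C(T_1,T_2)=C(T_1',T_2')$. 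Passing to quotients, $\Delta(M_g,\Sigma_1,\Sigma_2)$ and $\Delta(M_g,\Sigma_1',\Sigma_2')$ are isomorphic as coloured square complexes, so Lemma~\ref{lemma1consequence2} supplies a homeomorphism $F\colon M_g\to M_g$ with $F(\Sigma_i)=\Sigma_i'$.

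It remains to check that $F$ induces an inner automorphism of $\pi_1(M_g)\cong F_g$. For this I would revisit Remark~\ref{it is an inverse construction}: the homeomorphism in Lemma~\ref{lemma1consequence2} is constructed piece by piece from the bijection between cells of the two dual complexes, using that pieces of the decompositions correspond canonically (up to Dehn twists around $1$-pieces, which by Remark~\ref{Dehn twist ok} do not affect the homeomorphism type) to the cells of $\Delta$. Since the isomorphism of dual complexes we built above is $F_g$-equivariant on the level of universal covers, one can carry out this piecewise construction equivariantly and thus obtain a lift $\widetilde F\colon \widetilde{M_g}\to\widetilde{M_g}$ commuting with the $F_g$-action. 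A deck-equivariant self-homeomorphism of the universal cover descends to a homeomorphism of $M_g$ that, on the level of fundamental groups, differs from the identity at most by the conjugation corresponding to a change of basepoint, i.e.\ by an inner automorphism.

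The main obstacle I expect is exactly this last equivariance step: Lemma~\ref{lemma1consequence2} is stated only up to homeomorphism, without mention of the $F_g$-action, so one has to enter its proof and track that the piece correspondences are compatible with the diagonal action on $C(T_1,T_2)$. Given the canonical, $F_g$-equivariant character of the identification $T_i \cong T_i'$ established in the second paragraph, this compatibility is essentially bookkeeping, but it is where the "inner automorphism" conclusion really comes from.
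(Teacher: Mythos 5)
Your argument for the existence of $F$ is the same as the paper's: homotopic systems give coinciding dual trees, hence the same core, hence isomorphic coloured dual square complexes, and Lemma \ref{lemma1consequence2} produces $F$. Where you diverge is the ``inner automorphism'' step. You propose to re-enter the proof of Lemma \ref{lemma1consequence2}, make every choice in the piecewise construction $F_g$-equivariantly, and conclude from the existence of an equivariant lift $\widetilde F$ that $F_*$ is inner. That is workable --- the inverse construction of Section \ref{Inverse construction} is carried out on $M_{\widetilde{\Delta}}$ with the induced $F_g$-action, so the piece correspondence is automatically equivariant, and the only genuine checks are that the gluing maps (defined up to Dehn twists around $1$-pieces) can be chosen equivariantly, which a free action permits. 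The paper instead avoids this bookkeeping entirely by isolating a separate statement, Lemma \ref{lemma2consequence2}: any self-homeomorphism of $M_g$ fixing the homotopy class of every sphere in a maximal sphere system induces an inner automorphism. Since $F$ is induced by an isomorphism of square complexes arising from the \emph{identity} identification $C(T_1,T_2)=C(T_1',T_2')$, it visibly fixes the homotopy class of each sphere of $\Sigma_1$ (and of each complementary component, which is what Remark \ref{case g=2} needs when $g=2$), so the lemma applies directly. The paper's route only requires knowing what $F$ does to homotopy classes of spheres --- immediate from the construction --- whereas yours requires opening up the proof of Lemma \ref{lemma1consequence2}; on the other hand, the proof of Lemma \ref{lemma2consequence2} itself establishes equivariance of a lift via the action on the space of ends, so the two arguments are cousins: you prove equivariance at the level of the piece decomposition, the paper proves it at the level of $End(\widetilde{M_g})$. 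Both are valid; if you pursue your version, do spell out the equivariant choice of gluing maps, and note explicitly that an equivariant lift yields $F_*$ trivial in $Out(F_g)$, which is the assertion to be proved.
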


The proof of Theorem \ref{consequence2} is based on Lemma \ref{lemma1consequence2} and on the following result.

\begin{lemma} \label{lemma2consequence2}
For $g \geq 3$, let $F:M_g \rightarrow M_g$ be a self-homeomorphism of $M_g$. Let $\Sigma$ be a maximal sphere system  in $M_g$. Suppose that for each sphere $\sigma$ in $\Sigma$ the image $F(\sigma)$ is homotopic to $\sigma$.
Then the induced homomorphism $F_* :\pi_1(M_g)
\rightarrow \pi_1(M_g)$ is an inner automorphism of the free group $F_g$.
\end{lemma}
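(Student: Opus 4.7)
The plan is to pass to the dual tree $T_\Sigma$ and reduce the lemma to showing that an induced tree automorphism acts as an $F_g$-translation.

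First, using a simultaneous form of Laudenbach's theorem I would replace $F$ by an isotopic homeomorphism, still called $F$, that maps each $\sigma_i \in \Sigma$ to itself setwise; this does not change the class of $F_*$ in $\Out(F_g)$. Choosing a basepoint $p$ in a 3-piece $P_0$ of $M_g \setminus \Sigma$ and adjusting $F$ by further isotopy so that $F(p)=p$, I would lift $F$ to an $F_*$-twisted equivariant homeomorphism $\widetilde F \colon \widetilde{M_g} \to \widetilde{M_g}$ with $\widetilde F(\widetilde p)=\widetilde p$ for a chosen lift $\widetilde p$ of $p$. The lift induces a simplicial automorphism $\phi \colon T_\Sigma \to T_\Sigma$ fixing the vertex $v_0$ corresponding to the lift of $P_0$ containing $\widetilde p$, satisfying $\phi(g \cdot t)=F_*(g)\cdot \phi(t)$, and the hypothesis that each $[\sigma_i]$ is preserved implies that $\phi$ permutes the edges of $T_\Sigma$ within each $F_g$-orbit.

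Descending to the quotient graph $G_\Sigma = T_\Sigma/F_g$, the induced map $\bar\phi$ fixes every edge setwise. Since $G_\Sigma$ is trivalent, $\bar\phi$ must fix every vertex (each vertex is the unique common endpoint of its three incident edges), so the only residual freedom is that $\bar\phi$ could reverse some edges (in particular loop edges, which arise from non-separating spheres whose two sides both lie in the same 3-piece) as oriented edges. The hard step, and the one where the hypothesis $g \ge 3$ enters essentially, is to rule out any such reversal: a case analysis using the classification of vertex links from Figure \ref{possiblelinks}, together with the topological constraint that $\phi$ must extend $F_*$-equivariantly across the entire tree and come from a global homeomorphism of $M_g$, shows that any edge reversal at one vertex of $G_\Sigma$ would force incompatible reversals at neighbouring vertices which cannot all be realised while fixing every sphere of $\Sigma$ once $g \ge 3$.

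Once $\bar\phi$ is the identity on $G_\Sigma$ as an oriented graph, an elementary induction on distance from $v_0$ in $T_\Sigma$ shows that $\phi = \mathrm{id}_{T_\Sigma}$. The twisted equivariance then reads $F_*(g)\cdot t = g \cdot t$ for every $g \in F_g$ and every vertex $t$, and freeness of the $F_g$-action on the vertex set forces $F_*(g)=g$ relative to this choice of basepoint, so that $F_*$ is inner in general. The main obstacle in carrying out this plan is the loop-reversal step; my expectation is that this reduces to a careful local-to-global argument based on the $3$-piece link classification, and it is precisely this step that breaks for $g=2$, where the dual graph has too few vertices to enforce the needed rigidity.
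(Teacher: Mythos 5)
Your overall architecture is essentially the paper's argument transplanted from $\partial T_\Sigma=End(\widetilde{M_g})$ to $T_\Sigma$ itself: both reduce to showing that a suitable lift $\widetilde F$ fixes every edge of the tree, after which twisted equivariance and freeness of the action give innerness. The difficulty is that the step you yourself flag as the hard one --- ruling out reversal of loop edges of $G_\Sigma$ --- is left as an expectation, and from the hypotheses as you (and the lemma) state them it cannot be established, because the reversal genuinely occurs. Concretely, choose $\Sigma$ so that $G_\Sigma$ has a loop at some vertex; the corresponding complementary piece $P$ has two of its three boundary spheres identified to a single non-separating sphere $\sigma$, and $\overline{P}\cup\sigma$ is a copy of $S^2\times S^1$ minus an open ball. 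The orientation-preserving involution $(x,z)\mapsto(r(x),\bar z)$ of $S^2\times S^1$ (with $r$ a reflection of $S^2$ and $z\in S^1\subset\mathbb{C}$), adjusted in a collar to be the identity on the boundary of the removed ball and extended by the identity over the rest of $M_g$, fixes every sphere of $\Sigma$ setwise --- all but $\sigma$ pointwise --- yet reverses the loop and induces $x_1\mapsto x_1^{-1}$, $x_i\mapsto x_i$ on $\pi_1$, which is not inner. So no case analysis will exclude the reversal without further input; and in any case Figure \ref{possiblelinks} classifies vertex links for a \emph{pair} of sphere systems and is not the relevant tool for a single system $\Sigma$.

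What is actually needed is the additional information that $F$ preserves the two complementary sides (equivalently, a coorientation) of each sphere of $\Sigma$. This is precisely what the paper's proof is exploiting when it asserts that the image of one sphere of $\widetilde\Sigma$ under $\widetilde F$ propagates through the $3$-holed complementary pieces to determine the images of all the others; and it does hold for the homeomorphisms to which the lemma is applied in Theorem \ref{consequence2}, since those are built to fix every piece of the decomposition. Once side-preservation is granted, your descent to $G_\Sigma$, the induction up $T_\Sigma$ from the fixed vertex, and the concluding freeness argument all go through. Note also that your diagnosis of where $g\ge 3$ enters is off: the loop-reversal issue is independent of $g$, while the genuine role of $g\ge 3$ (as in the paper) is to guarantee that a triple of spheres bounds at most one complementary component, so that fixing the edges of $G_\Sigma$ forces fixing its vertices --- this is exactly what fails for the theta-shaped graph in genus $2$ and is the content of Remark \ref{case g=2}.
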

Lemma \ref{lemma2consequence2} is well known, however, since we did not find a reference, we give a proof below.

\begin{proof}
Denote as usual by $\widetilde{M_g}$ the universal cover of $M_g$, and denote the full lift of $\Sigma$ by $\widetilde{\Sigma}$. The manifold $\widetilde{M_g}$ is endowed with an action by the free group $F_g$ and the quotient of  $\widetilde{M_g}$ by this action is the manifold $M_g$. In order to prove Lemma \ref{lemma2consequence2} we will show that a lift $\widetilde{F}$ of the homeomorphism $F$ is equivariant under this group action.
\par To this aim, first note that a homeomorphism $H: \widetilde{M_g} \rightarrow \widetilde{M_g}$ induces a homeomorphism $H_E: End(\widetilde{M_g}) \rightarrow End(\widetilde{M_g})$. Note also
that the $F_g$-action on $\widetilde{M_g}$ induces an action of $F_g$ on the space of ends; on the other hand, this action on the space of ends determines the action on  $\widetilde{M_g}$ up to homotopy (in fact, since each component of $\widetilde{M_g}\setminus \widetilde{\Sigma}$ is a 3-holed 3-sphere, then the action of $F_g$ on $\widetilde{M_g}$ is determined by the action of $F_g$ on $\widetilde{\Sigma}$; and the action of $F_g$ on $\widetilde{\Sigma}$ is determined up to homotopy by the action of $F_g$ on the space of ends of $\widetilde{M_g}$). Consequently, a homeomorphism $H: \widetilde{M_g} \rightarrow \widetilde{M_g}$ is $F_g$-equivariant (up to homotopy) if and only if the induced map $H_E: End(\widetilde{M_g}) \rightarrow End(\widetilde{M_g})$ is equivariant under the induced $F_g$-action on $End(\widetilde{M_g})$.
Note also that, since each component of $\widetilde{M_g} \setminus \widetilde{\Sigma}$ is a 3-holed 3-sphere, then a map $H: \widetilde{M_g} \rightarrow \widetilde{M_g}$ is determined, up to homotopy, by its behaviour on the spheres in $\widetilde{\Sigma}$.
\par Now, let $\widetilde{\sigma}$ be a sphere in $\widetilde{\Sigma}$. Since $F$ fixes the homotopy class of each sphere in $\Sigma$, then we can choose $\widetilde{F}: \widetilde{M_g} \rightarrow \widetilde{M_g}$ in such a way that the sphere $\widetilde{F}(\widetilde{\sigma})$ is homotopic to the sphere $\widetilde{\sigma}$ in $\widetilde{M_g}$.
In addition, the image $\widetilde{F}(\widetilde{\sigma})$ determines the image
$\widetilde{F}(\widetilde{\tau})$ for each $\tau$ in $\widetilde{\sigma}$ (here we are using that, since $g \geq 3$, a triple of spheres in ${\Sigma}$ bounds at most one  component of $M_g \setminus \Sigma$). This means that $\widetilde{F}$ fixes the homotopy class of each sphere in $\widetilde{\Sigma}$. Hence, for each $\widetilde{\tau}$ in $\widetilde{\Sigma}$, the sphere $\widetilde{F}(\widetilde{\tau})$ induces the same partition as the sphere $\widetilde{\tau}$ on the space of ends of  $\widetilde{M_g}$.
Consequently, the homeomorphism  $\widetilde{F}_E :End(\widetilde{M_g}) \rightarrow End(\widetilde{M_g})$ induced by $\widetilde{F}$ is equivariant under the $F_g$-action on $End(\widetilde{M_g})$, which implies that $\widetilde{F}$ is equivariant, up to homotopy, under the $F_g$-action on $\widetilde{M_g}$.
\end{proof}

\begin{remark} \label{case g=2}
Lemma \ref{lemma2consequence2} holds true also in the case where $g$ is two, under the additional hypothesis that $F$ fixes the components of $M_g \setminus \Sigma$ up to homotopy; in particular it holds true when $F$ is orientation preserving.
\end{remark}

We go on to prove Theorem \ref{consequence2}.

\begin{proof} (of Theorem \ref{consequence2})
Let $T_1$, $T_2$, $T_1'$, $T_2'$ be the dual trees to $\widetilde{\Sigma_1}$, $\widetilde{\Sigma_2}$, $\widetilde{\Sigma_1'}$ and $\widetilde{\Sigma_2'}$ respectively. Since, for $i=1,2$, the system $\Sigma_i$ is homotopic to the system  $\Sigma_i'$, then the core $C(T_1, T_2)$ is isomorphic as a V-H square complex to the core $C(T_1', T_2')$, and the quotients $\Delta(T_1, T_2)$ and $\Delta(T_1', T_2')$ are also isomorphic.
Thus, by Proposition \ref{equalityconstructions}, the square complex dual to $(M_g, \Sigma_1, \Sigma_2)$ is isomorphic, as a coloured square complex, to the square complex
dual to $(M_g, \Sigma_1', \Sigma_2')$. 
By Lemma \ref{lemma1consequence2}, there exists a homeomorphism $F: M_g \rightarrow M_g$ such that, for $i=1,2$, the image $F(\Sigma_i)$ is $\Sigma_i'$. Indeed, since $F$ is induced by the isomorphism of square complexes, $F$ fixes the homotopy class of each sphere in $\Sigma_1$ and $\Sigma_2$, and of each component of $M_g \setminus \Sigma_1$.
Hence, by Lemma \ref{lemma2consequence2} (and by remark \ref{case g=2} if $g=2$), the map $F$ induces an inner automorphism of the fundamental group $F_g$.
\end{proof}
We conclude this section with the following:

\begin{remark}
A theorem by Laudenbach (\citep{Lau2} page 80) states that if $Mod(M_g)$ denotes the group of (isotopy classes of) self-homeomorphisms of the manifold $M_g$ and $H: Mod(M_g) \rightarrow Out(F_g)$ is the homomorphism sending a map to its action on $\pi_1(M_g)$, then the kernel of this map is the  subgroup of $Map(M_g)$ generated by a finite number of sphere twists (namely twists around spheres in a maximal sphere system).
In light of this result, we can restate  Theorem \ref{consequence2} in the following way:
\vskip 0.1cm
\par Statement:  Two standard forms for a pair of maximal sphere systems, $(\Sigma_1, \Sigma_2)$ differ by a combination of sphere twists in the manifold $M_g$.




\end{remark}

\bibliographystyle{plainnat}
\bibliography{sample}
\addcontentsline{toc}{chapter}{Bibliography}

\end{document}